\documentclass[12pt, reqno]{amsart}

\usepackage{
amsmath,
amssymb,
amsthm,
mathrsfs,
amsfonts,
ytableau,
enumitem,
comment,
upgreek,
soul,
yhmath,
mathtools,
shuffle,
kotex,
array,
caption,
tabularx
}

\usepackage[euler]{textgreek}

\usepackage{tikz,tikz-cd}

\usepackage[colorinlistoftodos, textwidth = 2.3cm]{todonotes}

\ytableausetup{mathmode, boxsize=1.2em}

\setlength{\marginparwidth}{2.2cm}
\setlength{\textwidth}{16cm} \setlength{\textheight}{20cm}
\setlength{\oddsidemargin}{0.3cm} \setlength{\evensidemargin}{0.3cm}

\newtheorem{theorem}{Theorem}[section]
\newtheorem{proposition}[theorem]{Proposition}
\newtheorem{lemma}[theorem]{Lemma}
\newtheorem{corollary}[theorem]{Corollary}

\newtheorem*{claim*}{Claim}

\theoremstyle{definition}

\newtheorem{example}[theorem]{Example}
\newtheorem{definition}[theorem]{Definition}
\newtheorem{remark}[theorem]{Remark}

\numberwithin{equation}{section} \numberwithin{figure}{section}
\numberwithin{table}{section}

\def\Z{\mathbb Z}
\def\C{\mathbb C}

\newcommand{\nc}{\newcommand}

\nc{\SG}{\mathfrak{S}}
\nc{\frakR}{\mathfrak{R}}
\nc{\frakL}{\mathfrak{L}}
\nc{\PCT}{\mathrm{PCT}}
\nc{\SPCT}{\mathrm{SPCT}}
\nc{\RT}{\mathrm{RT}}
\nc{\SRT}{\mathrm{SRT}}
\nc{\RCT}{\mathrm{RCT}}
\nc{\SRCT}{\mathrm{SRCT}}
\nc{\SYRT}{\mathrm{SYRT}}
\nc{\SYCT}{\mathrm{SYCT}}
\nc{\SPYCT}{\mathrm{SPYCT}}
\nc{\tst}{\mathtt{st}}
\nc{\Span}{\mathrm{span}}
\nc{\comp}{\mathrm{comp}}
\nc{\rmst}{\mathrm{st}}
\nc{\Des}{\mathrm{Des}}
\nc{\set}{\mathrm{set}}
\nc{\wt}{\mathrm{wt}}
\nc{\ch}{\mathrm{ch}}
\nc{\id}{\mathrm{id}}
\nc{\Sym}{\mathrm{Sym}}
\nc{\Qsym}{\mathrm{QSym}}
\nc{\Nsym}{\mathrm{NSym}}
\nc{\sh}{\mathrm{sh}}
\nc{\bfS}{\mathbf{S}}
\nc{\bfm}{\mathbf{m}}
\nc{\hbfS}{\widehat{\mathbf{S}}}
\nc{\bfF}{\mathbf{F}}
\nc{\calB}{\mathcal{B}}
\nc{\calS}{\mathcal{S}}
\nc{\calRS}{\mathcal{RS}}
\nc{\hcalS}{\widehat{\mathcal{S}}}
\nc{\alphamax}{\alpha_{\rm max}}
\nc{\brho}{\overline{\rho}}
\nc{\bphi}{\overline{\phi}}
\nc{\calV}{\mathcal{V}}
\nc{\calR}{\mathcal{R}}
\nc{\sfR}{\mathsf{R}}
\nc{\calG}{\mathcal{G}}
\nc{\tal}{\lambda(\alpha)}
\nc{\tbe}{\widetilde{\beta}}
\nc{\opi}{\overline{\pi}}
\nc{\calP}{\mathcal{P}}
\nc{\rmtop}{\mathrm{top}}
\nc{\rad}{\mathrm{rad}}
\nc{\bfP}{\mathbf{P}}
\nc{\SET}{\mathrm{SET}}
\nc{\SIT}{\mathrm{SIT}}
\nc{\rev}{\mathrm{r}}
\nc{\Th}{\theta}
\nc{\mPhi}{\Phi}
\nc{\mphi}{\phi}
\nc{\mPsi}{\Psi}
\nc{\hmPsi}{\widehat{\Psi}}
\nc{\mpsi}{\psi}
\nc{\mGam}{\Gamma}
\nc{\tcd}{\mathtt{cd}}
\nc{\trd}{\mathtt{rd}}
\nc{\trcd}{\mathtt{rcd}}
\nc{\rmr}{\mathrm{r}}
\nc{\rmc}{\mathrm{c}}
\nc{\rmt}{\mathrm{t}}

\nc{\col}{\mathrm{col}}
\nc{\row}{\mathrm{row}}
\nc{\calE}{\mathcal{E}}
\nc{\calT}{\mathscr{T}}
\nc{\sfT}{\mathsf{T}}
\nc{\calEsa}{\mathcal{E}^\upsig(\alpha)}
\nc{\tauC}{\tau_{\scalebox{0.5}{$C$}}}
\nc{\sytabC}{\sytab_{\scalebox{0.5}{$C$}}}
\nc{\bbfP}{\overline{\bfP}}
\nc{\pr}{\mathsf{pr}}
\nc{\Ups}{\Upsilon}
\nc{\pact}{\diamond}
\nc{\tauE}{\tau_{E}^{~}}
\nc{\tauF}{\tau_{\scalebox{0.5}{$F$}}}
\nc{\tauG}{\tau_{\scalebox{0.5}{$G$}}}
\nc{\rtE}{T_{\scalebox{0.5}{$E$}}}
\nc{\rtF}{T_{\scalebox{0.5}{$F$}}}
\nc{\rtG}{T_{\scalebox{0.5}{$G$}}}
\nc{\oPaE}{\overline{\Phi}_{\alpha_E}}
\nc{\oPaF}{\overline{\Phi}_{\alpha_F}}
\nc{\oPaG}{\overline{\Phi}_{\alpha_G}}
\nc{\tab}{\tau}
\nc{\sytab}{\widehat{\tau}}
\nc{\hatE}{\widehat{E}}
\nc{\hati}{\hat{i}}
\nc{\hcalE}{\widehat{\calE}}
\nc{\hatC}{\widehat{C}}
\nc{\bal}{{\boldsymbol{\upalpha}}}
\nc{\bbe}{{\boldsymbol{\upbeta}}}
\nc{\SPYRT}{\mathrm{SPYRT}}

\nc{\bgam}{{\boldsymbol{\upgamma}}}
\nc{\bdel}{{\boldsymbol{\updelta}}}
\nc{\weakcon}{\odot}
\nc{\basisI}{I}

\nc{\ldalpha}{\lambda(\alpha)}
\nc{\SRIT}{\mathrm{SRIT}}
\nc{\re}{\mathrm{rev}}
\nc{\otau}{\overline{\tau}}
\nc{\rtop}{{\rm top}}
\nc{\sfc}{\mathsf{c}}
\nc{\sfr}{\mathsf{r}}
\nc{\tH}{\mathtt{H}}
\nc{\tV}{\mathtt{V}}
\nc{\rpi}{\mathring{\pi}}
\nc{\cpi}{\check{\pi}}
\nc{\frakm}{\mathfrak{m}}
\nc{\sfem}{\mathsf{em}}
\nc{\Hom}{\mathrm{Hom}}
\nc{\module}{\mathrm{mod} \, }
\nc{\rmread}{\mathsf{read}}
\nc{\tread}{\underline{\mathsf{read}}}
\nc{\ocalE}{\overline{\calE}}
\nc{\oE}{\overline{E}}

\nc{\SPCTsa}{\SPCT^\upsig(\alpha)}
\nc{\bfSsa}{\bfS_\alpha^\upsig}
\nc{\bfSsaC}{{\bfS}^\upsig_{\alpha,C}}
\nc{\hbfSsa}{\widehat{\bfS}_\alpha^\upsig}
\nc{\upineq}{\rotatebox{90}{$<$}}
\nc{\downineq}{\rotatebox{270}{$<$}}
\nc{\diagineq}{\rotatebox{135}{$<$}}
\nc{\sfB}{\mathsf{B}}
\nc{\hxi}{\widehat{\xi}}
\nc{\hxidwJ}{\hxi_{\scalebox{0.55}{$J$}}}
\nc{\hxiupJ}{\hxi^{\scalebox{0.55}{$J$}}}
\nc{\scrS}{\mathscr{S}}
\nc{\bfT}{\mathbf{T}}
\nc{\tshuffle}{\,\widetilde{\shuffle}\,}
\nc{\sJ}{\scalebox{0.55}{$J$}}
\nc{\sJo}{\scalebox{0.55}{$J_1$}}
\nc{\sJt}{\scalebox{0.55}{$J_2$}}

\nc{\ra}{\rightarrow}

\nc{\matr}[2]{\left( \hspace{-1ex} \begin{array}{c} #1 \\ #2 \end{array} \hspace{-1ex} \right)}


\newcommand{\tre}{\textcolor{red}}
\definecolor{wsgreen}{rgb}{0,0.5,0}

\nc{\DIRT}{\mathrm{DIRT}}
\nc{\hpi}{\pi}
\nc{\frakI}{\mathfrak{I}}
\nc{\hfrakI}{\widehat{\mathfrak{I}}}
\nc{\orho}{\overline{\rho}}
\nc{\autotheta}{\uptheta}
\nc{\calW}{\mathcal{W}}

\nc{\autophi}{\upphi}
\nc{\autochi}{\upchi}
\nc{\autoomega}{\upomega}
\nc{\hIM}{\widehat{\sfB}}
\nc{\bfpi}{\boldsymbol{\uppi}}
\nc{\bfopi}{\overline{\boldsymbol{\uppi}}}
\nc{\osfB}{\overline{\sfB}}
\nc{\rmw}{\mathrm{w}}
\nc{\conc}{\; {\bullet} \;}
\nc{\ostar}{\; \overline{\bullet} \;}
\nc{\rank}{\mathrm{rank}}
\nc{\fkp}{\mathfrak{p}}
\nc{\bfR}{\mathbf{R}}
\nc{\calD}{\overline{\mathrm{Des}}}
\nc{\tPhi}{\widetilde{\mPhi}}

\nc{\upsig}{{\boldsymbol{\upsigma}}}
\nc{\bfSsaE}{{\bfS}^\upsig_{\alpha,E}}

\nc{\hfkp}{\widehat{\mathfrak{p}}}

\nc{\hautophi}{{\widehat{\autophi}}}
\nc{\hautotheta}{{\widehat{\autotheta}}}
\nc{\hautoomega}{{\widehat{\autoomega}}}
\nc{\rmperm}{\mathrm{perm}}

\nc{\bfsigJ}{\sigma_{\scalebox{0.55}{$J$}}}
\nc{\bfrhoJ}{\rho^{\scalebox{0.55}{$J$}}}

\nc{\teta}{\widetilde{\eta}}
\nc{\urmw}{\underline{\mathrm{w}}}

\newcommand*\sq{\mathbin{\vcenter{\hbox{\rule{.5ex}{.5ex}}}}}

\nc{\pistar}[1]{\pi_{#1}^*}
\nc{\wfkp}{\widetilde{\mathfrak{p}}}
\nc{\bfpsi}{\boldsymbol{\uppsi}}

\nc{\yt}[1]{\todo[size=\tiny,color=blue!10]{#1 \\ \hfill --- Young-Tak}}
\nc{\YT}[1]{\todo[size=\tiny,inline,color=blue!10]{#1
		\\ \hfill --- Young-Tak}}
		
\nc{\yh}[1]{\todo[size=\tiny,color=cyan!10]{#1 \\ \hfill --- Young-Hun}}
\nc{\YH}[1]{\todo[size=\tiny,inline,color=cyan!10]{#1
		\\ \hfill --- Young-Hun}}
		
\nc{\sy}[1]{\todo[size=\tiny,color=magenta!10]{#1 \\ \hfill --- So-Yeon}}
\nc{\SY}[1]{\todo[size=\tiny,inline,color=magenta!10]{#1
		\\ \hfill --- So-Yeon}}
		
\nc{\ws}[1]{\todo[size=\tiny,color=green!10]{#1 \\ \hfill ---  Woo-Seok}}
\nc{\WS}[1]{\todo[size=\tiny,inline,color=green!10]{#1
		\\ \hfill --- Woo-Seok}}

\definecolor{purple}{rgb}{0.44, 0.0, 1.0}

\definecolor{yhblue}{rgb}{0,0,0.6}

\newenvironment{red}{\relax\color{red}}{\hspace*{.5ex}\relax}
\newenvironment{blue}{\relax\color{yhblue}}{\hspace*{.5ex}\relax}
\newenvironment{green}{\relax\color{wsgreen}}{\hspace*{.5ex}\relax}
\newenvironment{magenta}{\relax\color{magenta}}{\hspace*{.5ex}\relax}
\newenvironment{purple}{\relax\color{purple}}{\hspace*{.5ex}\relax}

\nc{\ber}{\begin{red}}
\nc{\er}{\end{red}}
\nc{\beb}{\begin{blue}}
\nc{\eb}{\end{blue}}
\nc{\bema}{\begin{magenta}}
\nc{\ema}{\end{magenta}}
\nc{\begr}{\begin{green}}
\nc{\egr}{\end{green}}

\nc{\bepu}{\begin{purple}}
\nc{\epu}{\end{purple}}

\title[Weak Bruhat interval modules of the 0-Hecke algebra]{Weak Bruhat interval modules of the 0-Hecke algebra}

\author[W.-S. Jung]{Woo-Seok Jung}
\address{Department of Mathematics, Sogang University, Seoul 04107, Republic of Korea}
\email{jungws@sogang.ac.kr}

\author[Y.-H. Kim]{Young-Hun Kim}
\address{Department of Mathematics, Sogang University, Seoul 04107, Republic of Korea \& 
Research Institute for Basic Science, Sogang University, Seoul 04107, Republic of Korea \&
Department of Mathematics, Ewha Womans University, Seoul 03760, Republic of Korea}
\email{ykim.math@gmail.com}

\author[S.-Y. Lee]{So-Yeon Lee}
\address{Department of Mathematics, Sogang University, Seoul 04107, Republic of Korea}
\email{sylee0814@sogang.ac.kr}

\author[Y.-T. Oh]{Young-Tak Oh}
\address{Department of Mathematics, Sogang University, Seoul 04107, Republic of Korea}
\email{ytoh@sogang.ac.kr}

\thanks{All authors were supported by the National Research Foundation of Korea (NRF) grant funded by the Korean Government (NRF-2020R1F1A1A01071055).
The second author was also supported by NRF grant funded by the Korean Government (NRF-2019R1A2C4069647).}

\keywords{$0$-Hecke algebra, representation, weak Bruhat order, quasisymmetric characteristic}

\date{\today}
\subjclass[2020]{20C08, 05E10, 05E05}

\begin{document}
\maketitle

\begin{abstract}
The purpose of this paper is to provide a unified method for dealing with various 0-Hecke modules constructed using tableaux so far.
To do this, we assign a $0$-Hecke module to each left weak Bruhat interval, called a weak Bruhat interval module.
We prove that every indecomposable summand of the $0$-Hecke modules categorifying dual immaculate quasisymmetric functions, extended Schur functions, quasisymmetric Schur functions, and Young row-strict quasisymmetric Schur functions is a weak Bruhat interval module. 
We further study embedding into the regular representation, induction product, restriction, and (anti-)involution twists of 
weak Bruhat interval modules. 
\end{abstract}

\section{Introduction}

The $0$-Hecke algebra $H_n(0)$ is a degenerate Hecke algebra obtained from the generic Hecke algebra $H_n(q)$ by specializing $q$ to $0$. 
The representation theory of $H_n(0)$ is very complicated, as can be inferred from the fact that it is not representation-finite for $n > 3$ (see~\cite{11DY, 02DHT}).
Nevertheless, it has attracted the attention of many mathematicians because of its close connection with quasi-symmetric functions.
This link was discovered by Duchamp, Krob, Leclerc, and Thibon~\cite{96DKLT}, who constructed an isomorphism called the \emph{quasisymmetric characteristic} between the Grothendieck ring associated to $0$-Hecke algebras and the ring $\Qsym$ of quasisymmetric functions.
In particular, since the mid-2010s, there have been many attempts to construct $H_n(0)$-modules categorifying important quasisymmetric functions using tableau models, rather than simply adding irreducible modules (for instance, see~\cite{20BS, 15BBSSZ, 19Searles, 15TW,19TW}).

The purpose of the present paper is to provide a method to treat these modules in a uniform manner. 
We start with the observation that every indecomposable direct summand of these modules has a basis isomorphic to a left weak Bruhat interval of $\SG_n$ when it is equipped with the partial order $\preceq$ defined by 
\begin{align*}
T \preceq T' \quad \text{if $\pi_\sigma \cdot T = T'$ for some $\sigma \in \SG_n$}.
\end{align*}

This leads us to consider the $H_n(0)$-module $\sfB(\sigma,\rho)$ for each weak Bruhat interval $[\sigma,\rho]_L$, called the \emph{weak Bruhat interval module associated to  $[\sigma,\rho]_L$}, whose underlying space is the $\C$-span of $[\sigma,\rho]_L$ and whose action is given by
\begin{equation*}
\pi_i \cdot \gamma  := \begin{cases}
\gamma & \text{if $i \in \Des_L(\gamma)$}, \\
0 & \text{if $i \notin \Des_L(\gamma)$ and $s_i\gamma \notin [\sigma,\rho]_L$,} \\
s_i \gamma & \text{if $i \notin \Des_L(\gamma)$ and $s_i\gamma \in [\sigma,\rho]_L$.}
\end{cases} 
\end{equation*}
In a similar point of view, we also consider the $H_n(0)$-module $\osfB(\sigma,\rho)$ for each weak Bruhat interval $[\sigma,\rho]_L$, called the \emph{negative weak Bruhat interval module associated to  $[\sigma,\rho]_L$}, whose underlying space is the $\C$-span of $[\sigma,\rho]_L$ and whose action is given by
\begin{equation*}
\opi_i \star \gamma  := \begin{cases}
- \gamma & \text{if $i \in \Des_L(\gamma)$}, \\
0 & \text{if $i \notin \Des_L(\gamma)$ and $s_i\gamma \notin [\sigma,\rho]_L$,} \\
s_i \gamma & \text{if $i \notin \Des_L(\gamma)$ and $s_i\gamma \in [\sigma,\rho]_L$.}
\end{cases} 
\end{equation*}
Here $\opi_i = \pi_i - 1$.
It should be pointed out that Hivert, Novelli, and Thibon~\cite{06HNT} introduced semi-combinatorial $H_n(0)$-modules associated to Yang-Baxter intervals $[Y_{\sigma}(\tau), Y_{\rho}(\tau)]$ to study the representation theory of $0$-Ariki-Koike-Shoji algebras, and our $\sfB(\sigma,\rho)$ and $\osfB(\sigma,\rho)$ can also be recovered by the $\tau = \id$ and $\tau = w_0$ specialization of these modules, respectively.
Here, $w_0$ is the longest element of $\SG_n$.

The family of weak and negative weak Bruhat interval modules is very adequate to our purpose in that it contains many $H_n(0)$-modules of our interest 
such as projective indecomposable modules, irreducible modules, the specializations $q = 0$ of the Specht modules of $H_n(q)$ in~\cite{02DHT}, and all indecomposable direct summands of the $H_n(0)$-modules in~\cite{20BS, 15BBSSZ, 19Searles, 15TW,19TW}. 
What is more appealing is that weak and negative weak Bruhat interval modules 
can be embedded into the regular representation of $H_n(0)$ and
they behave very nicely with respect to induction product, restriction, and (anti-)involution twists. 
Let $\mathscr{B}_n$ be the full subcategory of the category $\module H_n(0)$ of finite dimensional $H_n(0)$-modules whose objects are direct sums of weak and negative weak Bruhat interval modules up to isomorphism.
From a categorical point of view, $\mathscr{B}_n$ is a good subcategory in the sense that 
\begin{itemize}
\item the Grothendieck group of $\mathscr{B}_n$ is isomorphic to the Grothendieck group of $\module H_n(0)$, and
\item the subcategory $\bigoplus_{n \ge 0} \mathscr{B}_n$ of $\bigoplus_{n \ge 0} \module H_n(0)$ is closed under induction product, restriction, and (anti-)involution twists.
\end{itemize}

In Section \ref{sec: WBIM}, we study structural properties of weak Bruhat interval modules.
In the first two subsections,
we present background material for weak and negative weak Bruhat interval modules and then construct  
an $H_n(0)$-module isomorphism 
\[
\sfem:\sfB(\sigma,\rho) \ra H_n(0) \pi_\sigma \opi_{\rho^{-1}w_0}, \quad 
\gamma \mapsto \pi_{\gamma}\opi_{\rho^{-1}w_0} \quad \text{for $\gamma \in [\sigma, \rho]_L$}
\]
(Theorem~\ref{thm: embedding}).
As an immediate consequence of this isomorphism, we see that projective indecomposable modules and irreducible modules
appear as weak Bruhat interval modules up to isomorphism.
By a slight modification of $\sfem$, we also see that the specializations $q = 0$ of the Specht modules of $H_n(q)$ are certain involution twists of weak Bruhat interval modules (Remark~\ref{rem: Specht}).
Although not covered here, the isomorphism $\sfem$ and its modification reveal interesting connections between certain $\mathcal{U}_0(gl_N)$-modules and weak Bruhat interval modules (Section~\ref{sec: Further remark}~(3)).

In the third subsection, we study restriction of weak Bruhat interval modules. 
As for induction product $\boxtimes$ of weak Bruhat interval modules, the following formula can be derived from~\cite{06HNT}: 
\begin{align}\label{eq: induction product}
\sfB(\sigma,\rho) \boxtimes \sfB(\sigma',\rho') 
\cong \sfB(\sigma \conc \sigma', ~ \rho \ostar \rho')
\end{align}
(see Lemma~\ref{lem: tensor product}).
We provide an explicit formula concerning restriction of weak Bruhat interval modules.
Let $\matr{[m+n]}{m}$ be the set of $m$-element subsets of $[m+n]$.
Given $\sigma, \rho \in \SG_{m+n}$, set 
\begin{align*}
\scrS_{\sigma, \rho}^{(m)} := \left\{J \in \matr{[m+n]}{m} \; \middle| \; 
J = \gamma^{-1}([1,m])~\text{for some $\gamma \in [\sigma,\rho]_L$}
\right\}.
\end{align*}
With this notation, our restriction rule appears in the following form: 
\begin{align}\label{eq: restriction rule}
\sfB(\sigma,\rho)\downarrow_{H_m(0) \otimes H_n(0)}^{H_{m+n}(0)} 
\hspace{1ex} \cong \hspace{-0.5ex} \bigoplus_{J\in \scrS_{\sigma, \rho}^{(m)} }
\sfB((\bfsigJ)_{\le m},  (\bfrhoJ)_{\le m}) \otimes \sfB((\bfsigJ)_{> m}, (\bfrhoJ)_{> m})
\end{align}
(Theorem~\ref{thm: restriction}).
For undefined notations $(\bfsigJ)_{\le m}$,  $(\bfrhoJ)_{\le m}$, $(\bfsigJ)_{> m}$, and $(\bfrhoJ)_{> m}$, see~\eqref{eq: sigJ rhoJ} and \eqref{eq: gamma >, <}.
The elements in $\scrS_{\sigma, \rho}^{(m)}$ parametrize the direct summands appearing in the right hand side of~\eqref{eq: restriction rule}.
It would be nice to find an easy description of this set, but it is not available at the current stage. 
Combining~\eqref{eq: induction product} with~\eqref{eq: restriction rule} yields a Mackey formula for weak Bruhat interval modules.
We prove that this is a natural lift of the Mackey formula due to Bergeron and Li~\cite{09BL}, which works for elements of the Grothendieck ring of $0$-Hecke algebras, to weak Bruhat interval modules (Theorem~\ref{thm: Mackey formula}).

In the final subsection, we describe the (anti-)involution twists of weak Bruhat interval modules for the involutions $\autophi, \autotheta$ and the anti-involution $\autochi$ due to Fayers~\cite{05Fayers}
and then demonstrate the patterns how these (anti-)involution twists act with respect to induction product and restriction.
Here,  
$\autophi(\pi_i)=\pi_{n-i}$, 
$\autotheta(\pi_i)= - \opi_i$, 
and $\autochi(\pi_i)=\pi_i $ for $1 \le i \le n-1$.
Given an $H_n(0)$-module $M$ and an (anti-)automorphism $\mu$, we denote by $\mu[M]$ the $\mu$-twist of $M$.
The precise definition can be found in~\eqref{eq: isomorphism twist} and~\eqref{eq: anti-automorphism twist}.
We prove that 
\[
\autophi[\sfB(\sigma, \rho)] \cong \sfB(\sigma^{w_0},\rho^{w_0}), \quad 
\autotheta[\sfB(\sigma,\rho)] \cong \osfB(\sigma,\rho), \quad \text{and} \quad
\autochi[\sfB(\sigma,\rho)] \cong \osfB(\rho w_0, \sigma w_0).
\]
Here, $\sigma^{w_0}$ is the conjugation of $\sigma$ by $w_0$. 
Using these isomorphisms, we can also describe the twists for the compositions of $\autophi, \autotheta$, and $\autochi$.
For the full list of (anti-)involution twists, see Table~\ref{tab: auto twist of BIM} and Table~\ref{tab: auto twist of F and P}.
Next, we explain the patterns how the (anti-)involution twists act with respect to induction product and restriction. 
Let $M$, $N$, and $L$ be weak Bruhat interval modules of $H_m(0)$, $H_n(0)$, and $H_{m+n}(0)$ respectively.
We see that 
\begin{gather*}
\autotheta[M \boxtimes N] \cong \autotheta[M] \boxtimes \autotheta[N], 
\\
\text{whereas} 
\quad \autophi[M \boxtimes N] \cong \autophi[N] \boxtimes \autophi[M], 
\quad \autochi [M \boxtimes N] \cong \autochi [N] \boxtimes \autochi [M]
\end{gather*}
and
\begin{gather*}
\autotheta[L \downarrow^{H_{m+n}(0)}_{H_m(0) \otimes H_n(0)}] \cong \autotheta[L] \downarrow^{H_{m+n}(0)}_{H_m(0) \otimes H_n(0)},  \quad
\autochi[L \downarrow^{H_{m+n}(0)}_{H_m(0) \otimes H_n(0)}] \cong \autochi[L] \downarrow^{H_{m+n}(0)}_{H_m(0) \otimes H_n(0)}, \\
\text{whereas} \quad
\autophi[L \downarrow^{H_{m+n}(0)}_{H_n(0) \otimes H_m(0)}] \cong \autophi[L] \downarrow^{H_{m+n}(0)}_{H_m(0) \otimes H_n(0)}
\end{gather*}
(Corollary~\ref{cor: automorhpism, induction, restriction}).

Section~\ref{sec: Examples of WBI modules} is devoted to show that every indecomposable direct summand arising from the $H_n(0)$-modules in~\cite{20BS, 15BBSSZ, 19Searles, 15TW,19TW} are weak Bruhat interval modules up to isomorphism.
We begin with reviewing the results in these papers.
Let $\alpha$ be a composition of $n$.
\begin{itemize}[leftmargin=6mm, itemsep = 0.5em]
\item
Tewari and van Willigenburg~\cite{15TW} construct an $H_n(0)$-module $\bfS_\alpha$ by defining a $0$-Hecke action on the set of \emph{standard reverse composition tableaux of shape $\alpha$}.
Its image under the quasisymmetric characteristic is the quasisymmetric Schur function attached to $\alpha$.

\item
Let $\upsig$ be a permutation in $\SG_{\ell(\alpha)}$, where $\ell(\alpha)$ is the length of $\alpha$. 
Tewari and van Willigenburg~\cite{19TW} construct an $H_n(0)$-module $\bfSsa$ by defining a $0$-Hecke action on the set $\SPCTsa$ of \emph{standard permuted composition tableaux of shape $\alpha$ and type $\upsig$}.
When $\upsig={\rm id}$, this module is equal to $\bfS_\alpha$ in~\cite{15TW}.

\item
Berg et al.~\cite{15BBSSZ} construct an indecomposable $H_n(0)$-module $\calV_\alpha$ by defining a $0$-Hecke action on the set $\SIT(\alpha)$ of \emph{standard immaculate tableaux of shape $\alpha$}.
Its image under the quasisymmetric characteristic is the dual immaculate quasisymmetric function attached to $\alpha$. 

\item
Searles~\cite{19Searles} constructs an indecomposable $H_n(0)$-module $X_\alpha$ by defining a $0$-Hecke action on the set $\SET(\alpha)$ of \emph{standard extended tableaux of shape $\alpha$}.
Its image under the quasisymmetric characteristic is the extended Schur function attached to $\alpha$.

\item
Bardwell and Searles~\cite{20BS} construct an $H_n(0)$-module $\bfR_\alpha$ by defining a $0$-Hecke action on the set $\SYRT(\alpha)$ of \emph{standard Young row-strict tableaux of shape $\alpha$}.
Its image under the quasisymmetric characteristic is the Young row-strict quasisymmetric Schur function attached to $\alpha$.
\end{itemize}
The modules $\calV_\alpha$ and $X_\alpha$ are indecomposable, 
whereas $\bfSsa$, and $\bfR_\alpha$ are not indecomposable in general.
The problem of decomposing $\bfSsa$ into indecomposables have been completely settled out by virtue of the papers~\cite{20CKNO, 19Konig, 15TW,19TW}. 
Indeed, $\bfSsa$ has the decomposition of the form
\begin{align*}
\bfSsa = \bigoplus_{E \in \calEsa} \bfSsaE,
\end{align*}
where $\calEsa$ represents the set of equivalence classes of $\SPCTsa$ under a certain distinguished equivalence relation and $\bfSsaE$ is the indecomposable submodule of $\bfSsa$ spanned by $E$.

To achieve our purpose, we first show that all of $\SIT(\alpha)$, $\SET(\alpha)$, and $E (\in \calEsa)$ have the source and sink, which are written as follows: 
\[
\begin{tabularx}{0.5\textwidth}{
>{\centering\arraybackslash}X |
>{\centering\arraybackslash}X |
>{\centering\arraybackslash}X |
>{\centering\arraybackslash}X }
 & $\SIT(\alpha)$ & $\SET(\alpha)$ &  $E$ \\ \hline
source & $\calT_\alpha$ & $\sfT_\alpha$ & $\tau_E$ \\ \hline
sink & $\calT'_\alpha$ & $\sfT'_\alpha$ & $\tau'_E$
\end{tabularx}
\]
For the definitions of source and sink, see Definition~\ref{def: of source and sink}.
Then, using the essential epimorphisms constructed in~\cite{20CKNO2}, 
we introduce two readings $\rmread$ and $\tread$, 
where the former is defined on $\SIT(\alpha)$ and $\SET(\alpha)$ and the latter is defined on each class $E$.
With this preparation, we prove that 
\begin{gather*}
\calV_\alpha \cong \sfB(\rmread(\calT_\alpha), \rmread(\calT'_\alpha)), \
X_\alpha \cong \sfB(\rmread(\sfT_\alpha),\rmread(\sfT'_\alpha)),\\ \text{and} \quad  
\bfSsaE \cong \sfB(\tread(\tauE), \tread(\tau'_E))
\end{gather*}
(Theorem~\ref{thm: Va Xa pi opi form} and Theorem~\ref{thm: bfSsaE pi opi form}).
It should be remarked that a reading on $E$ different from ours has already been introduced by Tewari and van Willigenburg~\cite{15TW}.
They assign a reading word $\col_{\tau}$ to each standard reverse composition tableau $\tau$ and
show that $(E, \preceq)$ is isomorphic to $([\col_{\tauE},\col_{\tau_E'}]_L, \preceq_L)$ as graded posets.
The weak Bruhat interval module $\sfB(\col_{\tau_E}, \col_{\tau_E'})$, however, is not isomorphic to $\bfS_\alpha$ in general (see Remark~\ref{rem: SPCT conj}).
Concerned with $\bfR_\alpha$, we consider its permuted version instead of itself.
We introduce new combinatorial objects, called \emph{permuted standard Young row-strict tableaux of shape $\alpha$ and type $\upsig$}, and define a $0$-Hecke action on them.
The resulting module $\bfR^\upsig_\alpha$ turns out to be isomorphic to the $\hautoomega$-twist of 
$\bfS^{\upsig^{w_0}}_{\alpha^{\rmr}}$.
This enables us to transport various properties of $\bfS^{\upsig^{w_0}}_{\alpha^{\rmr}}$ to $\bfR^\upsig_\alpha$ 
in a functorial way. 

In the final section, we provide some future directions to pursue.

\section{Preliminaries}\label{sec: Preliminaries}

Given any integers $m$ and $n$, define $[m,n]$ to be the interval $\{t\in \mathbb Z: m\le t \le n\}$ whenever $m \le n$ and the empty set $\emptyset$ else.
For simplicity, we set $[n]:=[1,n]$.
Unless otherwise stated, $n$ will denote a nonnegative integer throughout this paper.

\subsection{Compositions and their diagrams}\label{subsec: comp and diag}

A \emph{composition} $\alpha$ of $n$, denoted by $\alpha \models n$, is a finite ordered list of positive integers $(\alpha_1,\alpha_2,\ldots, \alpha_k)$ satisfying $\sum_{i=1}^k \alpha_i = n$.
We call $\alpha_i$ ($1 \le i \le k$) a \emph{part} of $\alpha$,  
$k =: \ell(\alpha)$ the \emph{length} of $\alpha$, and $n =:|\alpha|$ the \emph{size} of $\alpha$. 
And, we define the empty composition $\varnothing$ to be the unique composition of size and length $0$.

Given $\alpha = (\alpha_1,\alpha_2,\ldots,\alpha_k) \models n$ and $I = \{i_1 < i_2 < \cdots < i_k\} \subset [n-1]$, 
let $\set(\alpha) := \{\alpha_1,\alpha_1+\alpha_2,\ldots, \alpha_1 + \alpha_2 + \cdots + \alpha_{k-1}\}$ and $\comp(I) := (i_1,i_2 - i_1, i_3 - i_2, \ldots,n-i_k)$.
The set of compositions of $n$ is in bijection with the set of subsets of $[n-1]$ under the correspondence $\alpha \mapsto \set(\alpha)$ (or $I \mapsto \comp(I)$).
The \emph{reverse composition $\alpha^\rmr$ of $\alpha$} is defined to be $(\alpha_k, \alpha_{k-1}, \ldots, \alpha_1)$, 
the \emph{complement $\alpha^\rmc$ of $\alpha$} be the unique composition satisfying $\set(\alpha^c) = [n-1] \setminus \set(\alpha)$, and the \emph{conjugate $\alpha^\rmt$ of $\alpha$} be the composition $(\alpha^\rmr)^\rmc=(\alpha^\rmc)^\rmr$.

\subsection{Weak Bruhat orders on the symmetric group}\label{subsec: Symmetric group}

Every element $\sigma$ of the symmetric group $\SG_n$ may be written as a word in $s_i := (i,i+1)$ with $1 \le i \le n-1$. 
A \emph{reduced expression for $\sigma$} is one of minimal length.
The number of simple transpositions in any reduced expression for $\sigma$, denoted by $\ell(\sigma)$, is called the \emph{length} of $\sigma$.
Let
\[
\Des_L(\sigma):= \{i \in [n-1] \mid \ell(s_i \sigma) < \ell(\sigma)\}
\ \  \text{and} \ \  
\Des_R(\sigma):= \{i \in [n-1] \mid \ell(\sigma s_i) < \ell(\sigma)\}.
\]
It is well known that if $\sigma = w_1 w_2 \cdots w_n$ in one-line notation, then
\begin{align}\label{eq: Des alternating def}
\begin{aligned}
\Des_L(\sigma) & = \{ i \in [n-1] \mid \text{$i$ is right of $i+1$ in $w_1 w_2 \cdots w_n$} \} \quad \text{and} \\
\Des_R(\sigma) & = \{ i \in [n-1] \mid w_i > w_{i+1}
\}.
\end{aligned} 
\end{align}

The \emph{left weak Bruhat order} $\preceq_L$ 
(resp. \emph{right weak Bruhat order} $\preceq_R$) on $\SG_n$
is the partial order on $\SG_n$ whose covering relation $\preceq_L^c$ (resp. $\preceq_R^c$) is defined as follows: 
$\sigma \preceq_L^c s_i \sigma$ if and only if $i \notin \Des_L(\sigma)$
(resp. $\sigma \preceq_R^c \sigma s_i$ if and only if $i \notin \Des_R(\sigma)$).
Equivalently, for any $\sigma, \rho \in \SG_n$,
\begin{align*}
& \text{$\sigma \preceq_L \rho$ if and only if 
$\rho = \gamma \sigma$ and $\ell(\rho) = \ell(\sigma) + \ell(\gamma)$ for some $\gamma \in \SG_n$, and} \\
& \text{$\sigma \preceq_R \rho$ if and only if
$\rho = \sigma \gamma$ and $\ell(\rho) = \ell(\sigma) + \ell(\gamma)$ for some $\gamma \in \SG_n$}.
\end{align*}
Although these two weak Bruhat orders are not identical,  
there exists a poset isomorphism $$(\SG_n, \preceq_L)\to (\SG_n, \preceq_R),\quad \sigma \mapsto \sigma^{-1}.$$
To avoid redundant overlap, our statements will be restricted to the left weak Bruhat order.

Given $\sigma$ and $\rho \in \SG_n$, the closed interval $\{\gamma \in \SG_n \mid \sigma \preceq_L \gamma \preceq_L \rho \}
$ is called the \emph{left weak Bruhat interval from $\sigma$ to $\rho$} and denoted by $[\sigma,\rho]_L$. 
It can be represented by the colored digraph whose vertices are given by $[\sigma,\rho]_L$ and $\{1,2,\ldots, n-1\}$-colored arrows given by
\begin{align*}
\gamma \overset{i}{\ra} \gamma' \quad \text{if and only if} \quad 
\text{$\gamma \preceq_L \gamma'$ and $s_i \gamma = \gamma'$}. 
\end{align*}

Let us collect notations which will be used later.
We use $w_0$ to denote the longest element in $\SG_n$.
For $I \subseteq [n-1]$, let $\SG_{I}$ be the parabolic subgroup of $\SG_n$ generated by $\{s_i \mid i\in I\}$ and $w_0(I)$ the longest element in $\SG_{I}$.
For $\alpha \models n$, let $w_0(\alpha) := w_0(\set(\alpha))$.
Finally, for $\sigma \in \SG_n$, we let $\sigma^{w_0} := w_0 \sigma w_0$.

\subsection{The $0$-Hecke algebra and the quasisymmetric characteristic}\label{subsec: 0-Hecke alg and QSym}
The $0$-Hecke algebra $H_n(0)$ is the associative $\C$-algebra with $1$ generated by the elements $\pi_1,\pi_2,\ldots,\pi_{n-1}$ subject to the following relations:
\begin{align*}
\pi_i^2 &= \pi_i \quad \text{for $1\le i \le n-1$},\\
\pi_i \pi_{i+1} \pi_i &= \pi_{i+1} \pi_i \pi_{i+1}  \quad \text{for $1\le i \le n-2$},\\
\pi_i \pi_j &=\pi_j \pi_i \quad \text{if $|i-j| \ge 2$}.
\end{align*}
Frequently we use another set of generators  $\{\opi_i := \pi_i -1 \mid 1 \le i \le n-1\}$.

For any reduced expression $s_{i_1} s_{i_2} \cdots s_{i_p}$ for $\sigma \in \SG_n$, let 
\[
\pi_{\sigma} := \pi_{i_1} \pi_{i_2 } \cdots \pi_{i_p} \quad \text{and} \quad \opi_{\sigma} := \opi_{i_1} \opi_{i_2} \cdots \opi_{i_p}.
\]
It is well known that these elements are independent of the choices of reduced expressions, and both $\{\pi_\sigma \mid \sigma \in \SG_n\}$ and $\{\opi_\sigma \mid \sigma \in \SG_n\}$ are $\mathbb C$-bases for $H_n(0)$.

According to \cite{79Norton}, there are $2^{n-1}$ pairwise inequivalent irreducible $H_n(0)$-modules and $2^{n-1}$ pairwise inequivalent projective indecomposable $H_n(0)$-modules, which are naturally indexed by compositions of $n$.
For $\alpha \models n$, let $\bfF_{\alpha}=\C v_{\alpha}$ and endow it with the $H_n(0)$-action as follows: for each $1 \le i \le n-1$,
\[
\pi_i \cdot v_\alpha = \begin{cases}
0 & i \in \set(\alpha),\\
v_\alpha & i \notin \set(\alpha).
\end{cases}
\]
This module is the irreducible $1$-dimensional $H_n(0)$-module corresponding to $\alpha$.
And, the projective indecomposable $H_n(0)$-module corresponding to $\alpha$
is given by the submodule 
\[
\calP_\alpha := H_n(0) \pi_{w_0(\alpha^\rmc)} \opi_{w_0(\alpha)}
\]
of the regular representation of $H_n(0)$.
It is known that $\bfF_\alpha$ is isomorphic to $\calP_\alpha / \rad \; \calP_\alpha$, where $\rad \; \calP_\alpha$ is the \emph{radical} of $\calP_\alpha$, the intersection of maximal submodules of $\calP_\alpha$.

Let $\calR(H_n(0))$ denote the $\Z$-span of the isomorphism classes of finite dimensional $H_n(0)$-modules. 
We denote by $[M]$ the isomorphism class corresponding to an $H_n(0)$-module $M$. 
The \emph{Grothendieck group} $\calG_0(H_n(0))$ is the quotient of $\calR(H_n(0))$ modulo the relations $[M] = [M'] + [M'']$ whenever there exists a short exact sequence $0 \ra M' \ra M \ra M'' \ra 0$. 
The irreducible $H_n(0)$-modules form a free $\Z$-basis for $\calG_0(H_n(0))$. Let
\[
\calG := \bigoplus_{n \ge 0} \calG_0(H_n(0)).
\]

Let us review the beautiful connection between $\calG$ and the ring $\Qsym$ of quasisymmetric functions.
For the definition of quasisymmetric functions, see~\cite[Section 7.19]{99Stanley}.

For a composition $\alpha$, the \emph{fundamental quasisymmetric function} $F_\alpha$, introduced in~\cite{84Gessel}, is defined by
\[
F_\varnothing = 1 
\quad \text{and} \quad
F_\alpha = \sum_{\substack{1 \le i_1 \le i_2 \le \cdots \le i_k \\ i_j < i_{j+1} \text{ if } j \in \set(\alpha)}} x_{i_1} \cdots x_{i_k} \quad \text{if $\alpha \neq \varnothing$}.
\]
It is known that $\{F_\alpha \mid \text{$\alpha$ is a composition}\}$ forms a $\mathbb Z$-basis for $\Qsym$.
When $M$ is an $H_m(0)$-module and $N$ is
an $H_n(0)$-module, we write $M \boxtimes N$ for the induction product of $M$ and $N$, that is,
\begin{align*}
M \boxtimes N= M \otimes N \uparrow_{H_m(0) \otimes H_n(0)}^{H_{m+n}(0)}.
\end{align*}
Here, $H_m(0) \otimes H_n(0)$ is viewed as the subalgebra of $H_{m+n}(0)$ generated by $\{\pi_i \mid i \in [m+n-1] \setminus \{m\} \}$.

It was shown in \cite{96DKLT} that, when $\calG$ is equipped with this product as multiplication, the linear map
\begin{equation}\label{quasi characteristic}
\ch : \calG \ra \Qsym, \quad [\bfF_{\alpha}] \mapsto F_{\alpha},
\end{equation}
called \emph{quasisymmetric characteristic}, is a ring isomorphism.
Indeed it is not only a ring isomorphism, but also a Hopf algebra isomorphism.

\section{Weak Bruhat interval modules}\label{sec: WBIM}

In this section, we present background material for weak Bruhat interval modules and then investigate their structural properties extensively. 

\subsection{Definition and basic properties}\label{subsec: def and basic properties}

Let us start with the definitions of weak and negative weak Bruhat interval modules.

\begin{definition}
Let $\sigma, \rho \in \SG_n$.
\begin{enumerate}
\item 
The \emph{weak Bruhat interval module associated to $[\sigma,\rho]_L$}, denoted by $\sfB(\sigma,\rho)$, is the $H_n(0)$-module with  $\C[\sigma,\rho]_L$ as the underlying space and with the $H_n(0)$-action defined by
\begin{align}\label{Hecke algebra action}
\pi_i \cdot \gamma := \begin{cases}
\gamma & \text{if $i \in \Des_L(\gamma)$}, \\
0 & \text{if $i \notin \Des_L(\gamma)$ and $s_i\gamma \notin [\sigma,\rho]_L$,} \\
s_i \gamma & \text{if $i \notin \Des_L(\gamma)$ and $s_i\gamma \in [\sigma,\rho]_L$}.
\end{cases} 
\end{align}
\item 
The \emph{negative weak Bruhat interval module associated to $[\sigma,\rho]_L$}, denoted by $\osfB(\sigma,\rho)$, is the $H_n(0)$-module with  $\C[\sigma,\rho]_L$ as the underlying space and with the $H_n(0)$-action defined by
\begin{align*}
\opi_i \star \gamma  := \begin{cases}
- \gamma & \text{if $i \in \Des_L(\gamma)$}, \\
0 & \text{if $i \notin \Des_L(\gamma)$ and $s_i\gamma \notin [\sigma,\rho]_L$,} \\
s_i \gamma & \text{if $i \notin \Des_L(\gamma)$ and $s_i\gamma \in [\sigma,\rho]_L$}.
\end{cases} 
\end{align*}
\end{enumerate}
\end{definition}
Hivert, Novelli, and Thibon~\cite{06HNT} introduced a semi-combinatorial $H_n(0)$-module associated to the interval $[Y_\sigma(\tau), Y_\rho(\tau)]$ of a Yang-Baxter basis $\{Y_\gamma(\tau)\}_{\gamma \in \SG_n}$ for each $\tau \in \SG_n$.
We omit the proof of the well-definedness of $\sfB(\sigma,\rho)$ and $\osfB(\sigma,\rho)$ since they can be recovered as the semi-combinatorial modules associated to $[Y_\sigma(\id), Y_\rho(\id)]$ and $[Y_\sigma(w_0), Y_\rho(w_0)]$, respectively. 

The following properties are almost straightforward. 
In particular, the last one can be obtained by mimicking \cite[Section 5]{15TW}.
\begin{enumerate}[label = $\bullet$]
\item Given $\sigma' \in [\sigma, \rho]_L$, the linear map $\iota: \sfB(\sigma',\rho) \ra \sfB(\sigma, \rho)$, sending $\gamma \mapsto \gamma$, is an injective $H_n(0)$-module homomorphism.
\item Given $\rho' \in [\sigma, \rho]_L$, the linear map $\pr: \sfB(\sigma,\rho) \ra \sfB(\sigma, \rho')$, sending $\gamma \mapsto \gamma$ if $\gamma \in [\sigma,\rho']_L$ and $\gamma \mapsto 0$ else, is a surjective $H_n(0)$-module homomorphism.
\item 
$\ch([\sfB(\sigma, \rho)]) = \sum_{\gamma \in [\sigma,\rho]_L} F_{\comp(\gamma)^\rmc}$,
where $\comp(\gamma) := \comp(\Des_L(\gamma))$.
\end{enumerate}

Finally, we briefly remark the classification of weak Bruhat interval modules up to isomorphism.
As pointed out in Subsection~\ref{subsec: Symmetric group}, every weak Bruhat interval can be viewed as a colored digraph.
One sees from \eqref{Hecke algebra action} that  
$\sfB(\sigma,\rho) \cong \sfB(\sigma',\rho')$ if 
there is a descent-preserving (colored digraph) isomorphism between $[\sigma,\rho]_L$ and $[\sigma',\rho']_L$.
For instance, 
$$
f:[14325,24315]_L \to [41352,42351]_L, 
\quad \pi_\gamma \cdot 14325 \mapsto \pi_\gamma \cdot 41352 
\quad (\gamma \in \SG_5)
$$ 
is such an isomorphism, thus $\sfB(14325,24315)\cong \sfB(41352,42351)$.  
On the other hand, it is not difficult to show that there is no 
descent-preserving isomorphism between $[14325,24315]_L$ and $[45312,45321]_L$ although they are isomorphic as colored digraphs.
Indeed, $\sfB(14325,24315)$ is indecomposable (Remark~\ref{rem: SPCT conj}), 
whereas $\sfB(45312,45321)$ can be decomposed into $\C 45321 \oplus \C (45312-45321)$ as seen in  Figure~\ref{fig:equi int not guarantee module iso}.
It would be very nice to characterize when a descent-preserving isomorphism between two intervals exists and ultimately to classify all weak Bruhat interval modules up to isomorphism.
\begin{figure}[t]
\def \hp{0.15}
\def \vp{0.3}
\def \hhp{3.75}
\def \hhhp{4}
\begin{tikzpicture}[baseline = 0mm]
\node[below] at (0.1,-3.6) {$\sfB(14325,24315)$};

\node at (0,0+2*\vp) {
$14325$
};
\node at (0.7-\hp,0.25+1.25*\vp) {} edge [out=40,in=320, loop] ();
\node at (1.4-\hp,0.7+1.25*\vp) {\small $\pi_2,\pi_3$};

\node at (1.5,-1.25+3*\vp) {$0$};
\draw [->] (0.5,-0.8+3*\vp) -- (1.25,-1.15+3*\vp);
\node at (1,-0.75+3*\vp) {\small $\pi_4$};

\draw [->] (0,-0.8+3*\vp) -- (0,-1.75+3*\vp);
\node[right] at (0.,-1.25+3*\vp) {\small $\pi_1$};

\node at (0,-2.5+4*\vp) {
$24315$
};
\node at (0.7-\hp,-2.75+4.75*\vp) {} edge [out=40,in=320, loop] ();
\node at (1.3-0*\hp,-2.3+4.75*\vp) {\small $\pi_1,\pi_3$};

\draw [->] (0,-3.0 +4*\vp) -- (0,-3.95+4*\vp);
\node[right] at (0,-3.5+4*\vp) {\small $\pi_2, \pi_4$};
\node at (0,-4.6+5*\vp) {$0$};

\node at (2.3,-2.5+4*\vp) {$\cong$};

\node[below] at (0.1 + \hhp,-3.6) {$\sfB(41352,42351)$};

\node at (0 + \hhp,0+2*\vp) {
$41352$
};
\node at (0.7-\hp + \hhp, 0.25+1.25*\vp) {} edge [out=40,in=320, loop] ();
\node at (1.4-\hp + \hhp, 0.7+1.25*\vp) {\small $\pi_2,\pi_3$};

\node at (1.5 + \hhp, -1.25+3*\vp) {$0$};
\draw [->] (0.5 + \hhp, -0.8+3*\vp) -- (1.25 + \hhp,-1.15+3*\vp);
\node at (1 + \hhp, -0.75+3*\vp) {\small $\pi_4$};

\draw [->] (0 + \hhp, -0.8+3*\vp) -- (0 + \hhp, -1.75+3*\vp);
\node[right] at (0 + \hhp, -1.25+3*\vp) {\small $\pi_1$};

\node at (0 + \hhp, -2.5+4*\vp) {
$42351$
};
\node at (0.7-\hp + \hhp, -2.75+4.75*\vp) {} edge [out=40,in=320, loop] ();
\node at (1.3-0*\hp + \hhp, -2.3+4.75*\vp) {\small $\pi_1,\pi_3$};

\draw [->] (0 + \hhp, -3.0 +4*\vp) -- (0 + \hhp,-3.95+4*\vp);
\node[right] at (0 + \hhp, -3.5+4*\vp) {\small $\pi_2, \pi_4$};
\node at (0 + \hhp,-4.6+5*\vp) {$0$};
\end{tikzpicture}
\hspace{6ex}
\begin{tikzpicture}[baseline = 0mm]
\node[below] at (2.3,-3.6) {$\sfB(45312,45321)$};

\node at (0,0+2*\vp) {
$45312$
};
\node at (0.7-\hp,0.25+1.25*\vp) {} edge [out=40,in=320, loop] ();
\node at (1.4-\hp,0.7+1.25*\vp) {\small $\pi_2,\pi_3$};

\node at (1.5,-1.25+3*\vp) {$0$};
\draw [->] (0.5,-0.8+3*\vp) -- (1.25,-1.15+3*\vp);
\node at (1,-0.75+3*\vp) {\small $\pi_4$};

\draw [->] (0,-0.8+3*\vp) -- (0,-1.75+3*\vp);
\node[right] at (0.,-1.25+3*\vp) {\small $\pi_1$};

\node at (0,-2.5+4*\vp) {
$45321$
};
\node at (0.7-\hp,-2.75+4.75*\vp) {} edge [out=40,in=320, loop] ();
\node at (1.4-0*\hp,-2.3+4.75*\vp) {\small $\pi_1,\pi_2,\pi_3$};

\draw [->] (0,-3.0 +4*\vp) -- (0,-3.95+4*\vp);
\node[right] at (0,-3.5+4*\vp) {\small $\pi_4$};
\node at (0,-4.6+5*\vp) {$0$};

\node at (2.5,-2.5+4*\vp) {$=$};

\node at (0+\hhhp, 0+2*\vp) {
$45312-45321$
};
\node at (0.75-\hp+\hhhp + 0.65, 0.25+1.25*\vp) {} edge [out=40,in=320, loop] ();
\node at (1.45-\hp+\hhhp+ 0.65, 0.7+1.25*\vp) {\small $\pi_2,\pi_3$};

\draw [->] (0+\hhhp, -0.6+3*\vp) -- (0+\hhhp, -1.1+3*\vp);
\node at (0.6+\hhhp, -0.85+3*\vp) {\small $\pi_1, \pi_4$};
\node at (0+\hhhp, -1.4+3*\vp) {$0$};

\node at (0+\hhhp, -2.4+4*\vp) {$\bigoplus$};

\node at (0+\hhhp, -3.2+4*\vp) {
$45321$
};
\node at (0.7-\hp+\hhhp, -3.45+4.75*\vp) {} edge [out=40,in=320, loop] ();
\node at (1.4-0*\hp+\hhhp, -3+4.75*\vp) {\small $\pi_1,\pi_2,\pi_3$};

\draw [->] (0+\hhhp, -3.8 + 5*\vp) -- (0+\hhhp, -4.3+5*\vp);
\node[right] at (0+\hhhp, -4.05 + 5*\vp) {\small $\pi_4$};
\node at (0+\hhhp, -4.6+5*\vp) {$0$};
\end{tikzpicture}
\caption{$\sfB(14325,24315)$, $\sfB(41352,42351)$, and $\sfB(45312,45321)$}
\label{fig:equi int not guarantee module iso}
\end{figure}

\subsection{Embedding weak Bruhat interval modules into the regular representation}\label{subsec: embedding}

The purpose of this subsection is to see that every weak Bruhat interval modules can be embedded into the regular representation.
More precisely, we prove that $\sfB(\sigma,\rho)$ is isomorphic to $H_n(0)\pi_\sigma \opi_{\rho^{-1} w_0}$.

For ease of notation, we use $\rpi_i$ to denote an arbitrary element in $\{\pi_i, \opi_i\}$ for each $1\le i \le n-1$.
The following relations among $\pi_i$'s and $\opi_i$'s, which will be used significantly, can be easily verified:    
\begin{equation}\label{eq: relations on H_n(0)}
\begin{aligned}
&\pi_i \pi_{i+1} \opi_i = \opi_{i+1} \pi_i \pi_{i+1}, \quad \pi_{i+1} \pi_{i} \opi_{i+1} = \opi_{i} \pi_{i+1} \pi_{i}  &(1\le i \le n-2),\\
& \pi_i \opi_{i+1} \opi_i = \opi_{i+1} \opi_i \pi_{i+1}, \quad \pi_{i+1} \opi_{i} \opi_{i+1} = \opi_{i} \opi_{i+1} \pi_{i} &  (1\le i \le n-2), \\
& \pi_i \opi_j = \opi_j \pi_i  &  (|i-j| \ge 2).
\end{aligned}
\end{equation}

One sees that $\pi_r \opi_{r+1} \pi_r, \pi_{r+1} \opi_{r} \pi_{r+1}, \opi_r \pi_{r+1} \opi_r$, and $\opi_{r+1} \pi_{r} \opi_{r+1}$ are missing 
in the list of ~\eqref{eq: relations on H_n(0)}.
The following lemma tells us the reason for this. 

\begin{lemma}\label{lem: constraint of braid}
For $\sigma, \rho \in \SG_n$ with $\ell(\sigma \rho) = \ell(\sigma) + \ell(\rho)$,
let $s_{u_1}s_{u_2}\cdots s_{u_l}$ and $s_{v_1}s_{v_2}\cdots s_{v_m}$  be arbitrary reduced expressions of $\sigma$ and $\rho$, respectively.
Let $\urmw$ be the word
\[
\pi_{u_1}\pi_{u_2}\cdots \pi_{u_l} \opi_{v_1}\opi_{v_2}\cdots \opi_{v_m}
\]
from the alphabet $\{\pi_i, \opi_i \mid 1\le i \le n-1\}$, where $\pi_i, \opi_i$ are viewed as letters not as elements of $H_n(0)$.  
Then every word $\rmw = w_1w_2 \cdots w_{l+m}$ obtained from $\urmw$ by applying only the following six braid relations and three commutation-relations  
\begin{align}\label{eq: braid relation}
&\begin{aligned}
\pi_i \pi_{i+1} \pi_i &= \pi_{i+1}\pi_{i} \pi_{i+1} &  (1\le i \le n-2),\\ 
\opi_i \opi_{i+1} \opi_i &= \opi_{i+1} \opi_{i} \opi_{i+1} &  (1\le i \le n-2),\\ 
\pi_i \pi_{i+1} \opi_i &= \opi_{i+1} \pi_i \pi_{i+1} &  (1\le i \le n-2),\\
\pi_{i+1} \pi_{i} \opi_{i+1} &= \opi_{i} \pi_{i+1} \pi_{i} &  (1\le i \le n-2) ,\\
\pi_i \opi_{i+1} \opi_i &= \opi_{i+1} \opi_i \pi_{i+1} &  (1\le i \le n-2),\\
\pi_{i+1} \opi_{i} \opi_{i+1} &= \opi_{i} \opi_{i+1} \pi_{i} &  (1\le i \le n-2),
\end{aligned}
\\[1ex]
\label{eq: commutation relation}
&\hspace{6ex}
\begin{aligned}
\pi_i \pi_j &= \pi_j \pi_i  &  (|i-j| \ge 2),\\
\opi_i \opi_j &= \opi_j \opi_i  &  (|i-j| \ge 2),\\
\pi_i \opi_j &= \opi_j \pi_i  &  (|i-j| \ge 2),
\end{aligned}
\end{align}
contains no subwords $w_rw_{r+1}w_{r+2}$ $(1\le r \le l+m-2)$ of the form 
\begin{align}\label{eq: prohibited subword}
\pi_i \opi_{i+1} \pi_i, \quad \pi_{i+1} \opi_{i} \pi_{i+1},
\quad \opi_i \pi_{i+1} \opi_i,   
\quad \opi_{i+1} \pi_{i} \opi_{i+1}. 
\end{align}
\end{lemma}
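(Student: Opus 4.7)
The plan is to proceed by induction on the length $k$ of a derivation $\urmw = \rmw^{(0)} \to \rmw^{(1)} \to \cdots \to \rmw^{(k)} = \rmw$, where each step is one of the nine listed relations.  For the base case $k=0$, every three consecutive letters of $\urmw$ have color pattern in $\{(+,+,+),\,(+,+,-),\,(+,-,-),\,(-,-,-)\}$, so $\urmw$ carries no forbidden subword.

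For the inductive step, assume that $\rmw^{(k-1)}$ has no forbidden subword and, for a contradiction, that $\rmw^{(k)}$ contains one at positions $r, r+1, r+2$.  The key structural ingredient is the forgetful map $\pi_i, \opi_i \mapsto s_i$: the initial word $\urmw$ maps to the reduced expression $s_{u_1} \cdots s_{u_l} s_{v_1} \cdots s_{v_m}$ for $\sigma \rho$ (which is reduced by the hypothesis $\ell(\sigma\rho) = \ell(\sigma) + \ell(\rho)$), and each of the nine listed relations collapses to a braid or commutation relation in $\SG_n$.  Consequently the image of every $\rmw^{(j)}$ is again a reduced expression for $\sigma\rho$, so any two consecutive letters of any $\rmw^{(j)}$ carry distinct underlying simple-reflection indices.

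I then case-split on the type of the move $\rmw^{(k-1)} \to \rmw^{(k)}$ and on how its site intersects $\{r, r+1, r+2\}$.  If the two sites are disjoint, the forbidden triple is already present in $\rmw^{(k-1)}$, contradicting the inductive hypothesis.  If the move is a braid relation whose site coincides with the forbidden triple, one side of the relation would exhibit color pattern $(+,-,+)$ or $(-,+,-)$, which is immediately ruled out by inspecting the six relations in~\eqref{eq: braid relation}.  In the remaining cases of partial overlap, the shape of the applied relation pins down the letters of $\rmw^{(k-1)}$ at the overlapping positions, and the index pattern $(j, j\pm1, j)$ forced by a forbidden triple is confronted with the index constraints of the relation together with reducedness of $\rmw^{(k-1)}$: for commutations the required indices force the two commuted letters to differ by $1$, contradicting $|i-j| \ge 2$; for mixed braid relations the forced indices make two consecutive letters of $\rmw^{(k-1)}$ share a simple-reflection index, contradicting reducedness.

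The main obstacle lies in this last step: there are nine listed relations and several partial-overlap configurations to combine, and in a handful of subcases the contradiction emerges only after examining a four-letter window around $\{r, r+1, r+2\}$ rather than three consecutive positions alone.  I expect to organize the verification in a table indexed by the nine relations against the overlap patterns, each entry discharged by a short index computation.  The indispensable ingredient in every entry is the forgetful-map invariant, which collapses each potential contradiction either to a violation of the index separation $|i-j| \ge 2$ of a commutation or to a repeated simple reflection in a reduced expression for $\sigma\rho$.
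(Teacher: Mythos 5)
Your overall strategy — inducting on the length of a derivation and case-splitting on how the site of the last move meets the hypothetical forbidden triple $\{r,r+1,r+2\}$ — is essentially the route the paper takes (the paper organises the induction by the number of braid moves, pushing commutation-only derivations into the base case, but the content of the case analysis is the same). The base case, the disjoint case, and the full-coincidence case are handled correctly. The difficulty is exactly where you place "the main obstacle," and the two contradictions you propose to rely on (a violation of the commutation constraint $|i-j|\ge 2$, or a consecutive repeated index spoiling reducedness) do not in fact discharge all the partial-overlap entries of your table.

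Two concrete failures. (i) If the last move is a commutation at positions $(r-1,r)$ or $(r+2,r+3)$, overlapping the forbidden triple in exactly one position, then the commuted letter lying inside the triple is an outer letter of the forbidden pattern (index $j$), while its commutation partner lies outside the triple with some index $j'$ satisfying $|j-j'|\ge 2$; nothing forces $|j-j'|=1$, and no consecutive repeat appears in $\rmw^{(k-1)}$, so neither of your two contradictions fires. (ii) If the last move is a braid applied at $(t_0-1,t_0,t_0+1)$ with the forbidden triple at $(t_0+1,t_0+2,t_0+3)$, then the indices of $\rmw^{(k)}$ on $t_0-1,\dots,t_0+3$ read $a,b,a,d,a$ with $b,d\in\{a-1,a+1\}$; the reducedness of $\rmw^{(k-1)}$ already forces $b\neq d$, so $\rmw^{(k-1)}$ has indices $b,a,b,d,a$ with no consecutive repeat, and neither word contains a consecutive $s_r s_{r+1} s_r s_{r+1}$. (The paper's own argument for the range $0<|j_0-t_0|\le 2$ asserts such a length-$4$ subword and hence has the same difficulty at offset $2$.) These configurations cannot be excluded by a purely local window computation; what is missing is a global invariant. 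The right one is implicit in the figure the paper draws for Lemma~3.4: each letter of $\rmw^{(k)}$ carries a definite inversion (wire crossing) of $\sigma\rho$, together with a color recording whether that crossing originates in the $\sigma$-block or the $\rho$-block of $\urmw$, and both data are preserved by the nine relations. A forbidden triple would force the three crossings among some wires $a<b<c$ to be colored so that $\{a,b\}$ and $\{b,c\}$ share one color while $\{a,c\}$ has the other — impossible, since if $\sigma^{-1}$ (respectively the remaining $\rho$-part) reverses both the pair $a,b$ and the pair $b,c$, it must also reverse $a,c$.
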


\begin{proof}
Let $k_{\rmw}$ denote the number of occurrences of 
the relations in~\eqref{eq: braid relation} in the middle of obtaining $\rmw$ from $\urmw$
by applying the relations in both ~\eqref{eq: braid relation} and~\eqref{eq: commutation relation}.
We will prove our assertion using the mathematical induction 
on $k_{\rmw}$. 
When $k_{\rmw}=0$, our assertion is obvious since $\rmw$ is obtained from $\urmw$ by applying the relations in~\eqref{eq: commutation relation} only.

Given a positive integer $\hat{k}$, assume that our assertion holds for all $\rmw$ whenever $k_{\rmw} < \hat{k}$.
Now, let $k_{\rmw}= \hat{k}$, which means that there is a  word $\rmw' = w'_1 w'_2 \cdots w'_{l+m}$ such that 
$k_{\rmw'}={\hat{k}} -1$ and $\rmw$ is obtained from $\rmw'$ by applying 
the relations in~\eqref{eq: commutation relation} and one of the relations in~\eqref{eq: braid relation} only once.
By the first paragraph, we have only to consider the case where $\rmw$ is obtained from $\rmw'$ by applying a relation in~\eqref{eq: braid relation} to $w'_{t_0 - 1} w'_{t_0} w'_{t_0 + 1}$ for some $1 < t_0 < l+m$.
Suppose that $\rmw$ contains a subword $w_{j_0-1} w_{j_0} w_{j_0 + 1}$ of the form~\eqref{eq: prohibited subword}.
We observe that no words in~\eqref{eq: prohibited subword} appear in the six relations in ~\eqref{eq: braid relation}, 
so $j_0\ne t_0$.
In case where $|j_0 - t_0| > 2$, $w'_{j_0-1} w'_{j_0} w'_{j_0+1}$ is of the form~\eqref{eq: prohibited subword}.
But, this is absurd since $k_{\rmw'} < \hat{k}$.
In case where $0 < |j_0 - t_0| \le 2$, $\rmw$ has a consecutive subword of the form $\rpi_{r} \rpi_{r+1} \rpi_{r} \rpi_{r+1}$ or $\rpi_{r+1} \rpi_{r} \rpi_{r+1} \rpi_{r}$.
For $1\le t \le l+m$, let $i_t$ be the subindex given by $w_t=\rpi_{i_t}$.
Using the notation $\rpi_{i}$ instead of $\pi_{i}$ and $\opi_{i}$, 
one can see that the relations in both~\eqref{eq: braid relation} and~\eqref{eq: commutation relation} 
coincide with braid and commutation relations of $\SG_n$. 
It says that $s_{i_1} s_{i_2} \cdots s_{i_{l+m}}$ is obtained from $s_{u_1} s_{u_2} \cdots s_{u_l} s_{v_1} s_{v_2} \cdots s_{v_m}$ by applying braid and commutation relations of $\SG_n$, thus is a reduced expression for $\sigma \rho$. 
This, however, cannot occur since 
$s_{i_1} s_{i_2} \cdots s_{i_{l+m}}$ contains a consecutive subword of the form 
$s_{r} s_{r+1} s_{r} s_{r+1}$ or $s_{r+1} s_{r} s_{r+1} s_{r}$.
Consequently, $\rmw$ contains no subwords of the form~\eqref{eq: prohibited subword}, so we are done.
\end{proof}

Given a word $\rmw = w_1 w_2 \cdots w_p$ from the alphabet $\{\pi_i, \opi_i \mid 1\le i \le n-1\}$, 
we can naturally see it as an element of $H_n(0)$. 
For clarity, we write it as $\iota(\rmw)$.
If a word $\rmw'$ is obtained from $\rmw$ by applying the relations in~\eqref{eq: braid relation} and~\eqref{eq: commutation relation}, then $\iota(\rmw) = \iota(\rmw')$ by~\eqref{eq: relations on H_n(0)}.
Combining ~\eqref{eq: relations on H_n(0)} with Lemma~\ref{lem: constraint of braid} yields the following lemma.

\begin{lemma}\label{lem: Des and last pi_i} 
For $\sigma, \rho \in \SG_n$ with $\ell(\sigma \rho) = \ell(\sigma) + \ell(\rho)$, if $j_0 \in \Des_R(\sigma \rho) \setminus \Des_R(\rho)$, then 
there exists a word $\rmw = w_1 w_2 \cdots w_{\ell(\sigma \rho)}$ from the alphabet $\{\pi_i, \opi_i \mid 1\le i \le n-1\}$ such that
$\iota(\rmw) = \pi_\sigma \opi_\rho$ and $w_{\ell(\sigma \rho)} = \pi_{j_0}$.
\end{lemma}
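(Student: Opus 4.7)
I plan to prove the lemma by induction on $m=\ell(\rho)$. The base case $m=0$ is immediate: then $\rho=e$, so $\opi_\rho=1$ and the hypothesis forces $j_0\in\Des_R(\sigma)$. Any reduced expression $s_{u'_1}\cdots s_{u'_{l-1}}s_{j_0}$ for $\sigma$ then produces the desired $\pi$-word $\pi_{u'_1}\cdots \pi_{u'_{l-1}}\pi_{j_0}$.

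For the inductive step, I split into two cases according to how $\Des_R(\rho)$ sits relative to $j_0$. \emph{Case A} is that some $p\in\Des_R(\rho)$ satisfies $|p-j_0|\ge 2$. Choosing a reduced expression $s_{v_1}\cdots s_{v_m}$ for $\rho$ with $v_m=p$ and setting $\rho_0:=\rho s_{v_m}$ (length $m-1$), a direct check using the one-line characterisation~\eqref{eq: Des alternating def} shows that swapping positions $v_m,v_m+1$ lies far from $\{j_0,j_0+1\}$, so $j_0\in\Des_R(\sigma\rho_0)$, $j_0\notin\Des_R(\rho_0)$, and $\ell(\sigma\rho_0)=\ell(\sigma)+\ell(\rho_0)$. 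The induction hypothesis applied to $(\sigma,\rho_0,j_0)$ yields a word $\rmw_0=w'_1\cdots w'_{l+m-2}\pi_{j_0}$ with $\iota(\rmw_0)=\pi_\sigma\opi_{\rho_0}$. Then $\rmw_0\cdot\opi_{v_m}$ is a length-$(l+m)$ word for $\pi_\sigma\opi_\rho$, and since $|j_0-v_m|\ge 2$ the commutation $\pi_{j_0}\opi_{v_m}=\opi_{v_m}\pi_{j_0}$ in~\eqref{eq: commutation relation} rewrites it as $w'_1\cdots w'_{l+m-2}\opi_{v_m}\pi_{j_0}$, which ends in $\pi_{j_0}$.

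\emph{Case B} is $\Des_R(\rho)\subseteq\{j_0-1,j_0+1\}$, where every reduced expression of $\rho$ ends in a simple reflection adjacent to $s_{j_0}$. Here I would apply the strong exchange condition to the concatenated word $s_{u_1}\cdots s_{u_l}s_{v_1}\cdots s_{v_m}s_{j_0}$ of length $l+m+1$ representing $\sigma\rho s_{j_0}$, whose actual length is $l+m-1$. Since $\rho s_{j_0}$ is itself reduced ($j_0\notin\Des_R(\rho)$), the deleted letter must lie in the $\sigma$-block, yielding a reduced expression $s_{u_1}\cdots\widehat{s_{u_k}}\cdots s_{u_l}s_{v_1}\cdots s_{v_m}s_{j_0}$ for $\sigma\rho$. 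By Matsumoto's theorem this target is connected to $s_{u_1}\cdots s_{u_l}s_{v_1}\cdots s_{v_m}$ by braid and commutation moves on simple reflections; performing the corresponding moves on $\urmw$ and invoking Lemma~\ref{lem: constraint of braid} ensures none of the forbidden subwords in~\eqref{eq: prohibited subword} ever appear, so every move lifts to one of the nine relations in~\eqref{eq: braid relation}--\eqref{eq: commutation relation}. This produces a length-$(l+m)$ mixed word whose last underlying simple reflection is $s_{j_0}$, and by selecting the sequence of moves so that an odd number of the type-changing braids (R3)--(R6) acts on the final three positions, the type at position $l+m$ is flipped from $\opi$ to $\pi$, yielding the required $\pi_{j_0}$.

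The main obstacle will be Case B: it is not automatic that the lifted sequence of moves ends with letter-type $\pi$ rather than $\opi$. Controlling this parity requires carefully orchestrating the tail-end mixed braid moves used when converting the simple-reflection sequence into one ending in $s_{j_0}$, and it is precisely this control that Lemma~\ref{lem: constraint of braid} is engineered to make possible by excluding the obstructing patterns $\pi_i\opi_{i\pm 1}\pi_i$ and $\opi_i\pi_{i\pm 1}\opi_i$ throughout the process.
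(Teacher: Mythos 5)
Your base case and Case~A are sound: in Case~A, the induction hypothesis together with the commutation $\pi_{j_0}\opi_{v_m}=\opi_{v_m}\pi_{j_0}$ does deliver a word ending in $\pi_{j_0}$, and it is easy to check that $(\sigma,\rho_0,j_0)$ still satisfies the hypotheses. The genuine gap is in Case~B, which is precisely where the induction buys nothing. There you correctly apply the strong exchange condition to place $s_{j_0}$ at the end of a reduced word for $\sigma\rho$ with the deletion occurring in the $\sigma$-block, and you correctly note that Matsumoto's theorem plus Lemma~\ref{lem: constraint of braid} lift the simple-reflection rewriting to the nine relations in~\eqref{eq: braid relation}--\eqref{eq: commutation relation}. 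But the concluding step, ``by selecting the sequence of moves so that an odd number of the type-changing braids (R3)--(R6) acts on the final three positions, the type at position $l+m$ is flipped,'' is a hope rather than an argument: you offer no reason such a sequence exists, and ``acting on the final three positions'' is not an invariant you can control as the word is rewritten, so the proposed parity count has no handle on it. You acknowledge this yourself, and as written the proof is incomplete.

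The paper closes this gap without any case split, and the ingredient you are missing is a \emph{tracking of types} rather than a parity count. After the exchange and Matsumoto steps, the paper records (in the figure directly following Lemma~\ref{lem: constraint of braid}) that each braid relation in~\eqref{eq: braid relation} and each commutation in~\eqref{eq: commutation relation} carries a position correspondence under which the label $\pi$ versus $\opi$ is preserved. Consequently, once the target reduced word $s_{u_1}\cdots\widehat{s}_{u_r}\cdots s_{u_l}s_{v_1}\cdots s_{v_m}s_{j_0}$ is fixed, the types appearing in the rewritten mixed word are obtained by pushing the initial marking $\pi^{l}\,\opi^{m}$ along this tracking. The assumption $j_0\notin\Des_R(\rho)$ enters exactly here: the final letter $s_{j_0}$ of the target word corresponds to an inversion of $\sigma\rho$ that is \emph{not} an inversion of $\rho$, hence to a position in the unbarred $\sigma$-block of the source word, forcing the last letter of the rewritten word to be $\pi_{j_0}$ rather than $\opi_{j_0}$. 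This type-tracking observation is what Case~B needs; without it the final letter's type is simply not pinned down, and the reduction in Case~A cannot repair that, since it terminates in Case~B whenever $\Des_R(\rho)\subseteq\{j_0-1,j_0+1\}$ with $\rho\neq e$.
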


\begin{proof}
Let $s_{u_1}s_{u_2} \cdots s_{u_l}$ be a reduced expression of $\sigma$ and $s_{v_1}s_{v_2} \cdots s_{v_m}$ a reduced expression of $\rho$.
Since $j_0 \in \Des_R(\sigma \rho) \setminus \Des_R(\rho)$, by the exchange property of Coxeter groups, there exists $1 \le r \le l$ such that $\sigma \rho = s_{u_1} s_{u_2} \cdots \widehat{s}_{u_r} \cdots s_{u_l} s_{v_1} s_{v_2} \cdots s_{v_m} s_{j_0}$, where $s_{u_1} s_{u_2} \cdots \widehat{s}_{u_r} \cdots s_{u_l}$ denotes the permutation obtained from $s_{u_1} s_{u_2} \cdots s_{u_l}$ by removing $s_{u_r}$.
Viewing $\pi_\sigma \opi_\rho$ just as a word from the alphabet $\{\rpi_i \mid 1\le i \le n-1\}$, 
due to ~\eqref{eq: relations on H_n(0)} and Lemma~\ref{lem: constraint of braid}, 
the relations in~\eqref{eq: braid relation} and~\eqref{eq: commutation relation} 
play the same role as the braid and commutation relations of $\SG_n$.
This implies that 
$\pi_\sigma \opi_\rho = \rpi_{u_1} \rpi_{u_2} \cdots \widehat{\rpi}_{u_r} \cdots \rpi_{u_l} \rpi_{v_1} \rpi_{v_2} \cdots \rpi_{v_m} \rpi_{j_0}.$
Let $\frakm(\rpi_i)=0$ if $\rpi_i=\pi_i$ and $1$ if $\rpi_i=\opi_i$.
Note that, while applying the relations in~\eqref{eq: braid relation} and~\eqref{eq: commutation relation} to $\pi_\sigma \opi_\rho$,
the values of $\rpi_j$'s under $\frakm$ are staying unchanged as seen in the following figure:
\begin{displaymath}
\begin{array}{ccc}
\text{the relations in \eqref{eq: braid relation}:}
&
\begin{array}{c}
\begin{tikzpicture}[xscale=0.8,yscale=0.8]
\def \vv{1.5cm}
\def \hh{1cm}
\draw[<->] (0.1, 1.2) -- (\hh - 0.1, 0.3);
\draw[<->] (0.1 + \hh, 1.2) -- (2*\hh - 1mm, 0.3);
\draw[<->] (2*\hh - 0.1, 1.2) -- (0.1, 0.3);

\node[] at (0,\vv) {$\pi_i$};
\node[] at (\hh,\vv) {$\pi_{i+1}$};
\node[] at (2*\hh,\vv) {$\pi_i$};

\node[] at (0,0) {$\pi_{i+1}$};
\node[] at (\hh,0) {$\pi_i$};
\node[] at (2*\hh,0) {$\pi_{i+1}$};
\end{tikzpicture}
 \quad
\begin{tikzpicture}[xscale=0.9,yscale=0.9]
\def \vv{1.5cm}
\def \hh{1cm}
\draw[<->] (0.1, 1.2) -- (\hh - 0.1, 0.3);
\draw[<->] (0.1 + \hh, 1.2) -- (2*\hh - 1mm, 0.3);
\draw[<->] (2*\hh - 0.1, 1.2) -- (0.1, 0.3);

\node[] at (0,\vv) {$\pi_i$};
\node[] at (\hh,\vv) {$\pi_{i+1}$};
\node[] at (2*\hh,\vv) {$\opi_i$};

\node[] at (0,0) {$\opi_{i+1}$};
\node[] at (\hh,0) {$\pi_i$};
\node[] at (2*\hh,0) {$\pi_{i+1}$};
\end{tikzpicture}
 \quad
\begin{tikzpicture}[xscale=0.9,yscale=0.9]
\def \vv{1.5cm}
\def \hh{1cm}
\draw[<->] (0.1, 1.2) -- (\hh - 0.1, 0.3);
\draw[<->] (0.1 + \hh, 1.2) -- (2*\hh - 1mm, 0.3);
\draw[<->] (2*\hh - 0.1, 1.2) -- (0.1, 0.3);

\node[] at (0,\vv) {$\pi_{i+1}$};
\node[] at (\hh,\vv) {$\pi_{i}$};
\node[] at (2*\hh,\vv) {$\opi_{i+1}$};

\node[] at (0,0) {$\opi_{i}$};
\node[] at (\hh,0) {$\pi_{i+1}$};
\node[] at (2*\hh,0) {$\pi_{i}$};
\end{tikzpicture}
\\
\begin{tikzpicture}[xscale=0.9,yscale=0.9]
\def \vv{1.5cm}
\def \hh{1cm}
\draw[<->] (0.1, 0.3) -- (\hh - 0.1, 1.2);
\draw[<->] (0.1 + \hh, 0.3) -- (2*\hh - 1mm, 1.2);
\draw[<->] (2*\hh - 0.1, 0.3) -- (0.1, 1.2);

\node[] at (0,\vv) {$\opi_i$};
\node[] at (\hh,\vv) {$\opi_{i+1}$};
\node[] at (2*\hh,\vv) {$\opi_i$};

\node[] at (0,0) {$\opi_{i+1}$};
\node[] at (\hh,0) {$\opi_i$};
\node[] at (2*\hh,0) {$\opi_{i+1}$};
\end{tikzpicture}
 \quad
\begin{tikzpicture}[xscale=0.9,yscale=0.9]
\def \vv{1.5cm}
\def \hh{1cm}
\draw[<->] (0.1, 0.3) -- (\hh - 0.1, 1.2);
\draw[<->] (0.1 + \hh, 0.3) -- (2*\hh - 1mm, 1.2);
\draw[<->] (2*\hh - 0.1, 0.3) -- (0.1, 1.2);

\node[] at (0,\vv) {$\pi_i$};
\node[] at (\hh,\vv) {$\opi_{i+1}$};
\node[] at (2*\hh,\vv) {$\opi_i$};

\node[] at (0,0) {$\opi_{i+1}$};
\node[] at (\hh,0) {$\opi_i$};
\node[] at (2*\hh,0) {$\pi_{i+1}$};
\end{tikzpicture}
 \quad
\begin{tikzpicture}[xscale=0.9,yscale=0.9]
\def \vv{1.5cm}
\def \hh{1cm}
\draw[<->] (0.1, 0.3) -- (\hh - 0.1, 1.2);
\draw[<->] (0.1 + \hh, 0.3) -- (2*\hh - 1mm, 1.2);
\draw[<->] (2*\hh - 0.1, 0.3) -- (0.1, 1.2);

\node[] at (0,\vv) {$\pi_{i+1}$};
\node[] at (\hh,\vv) {$\opi_{i}$};
\node[] at (2*\hh,\vv) {$\opi_{i+1}$};

\node[] at (0,0) {$\opi_{i}$};
\node[] at (\hh,0) {$\opi_{i+1}$};
\node[] at (2*\hh,0) {$\pi_{i}$};
\end{tikzpicture}
\end{array}
\\[13ex]
\text{the relations in \eqref{eq: commutation relation}:}
&
\begin{array}{c}
\begin{tikzpicture}[xscale=0.8,yscale=0.8]
\def \vv{1.5cm}
\def \hh{1cm}
\draw[<->] (0.1, 0.3) -- (\hh - 0.1, 1.2);
\draw[<->] (\hh - 0.1, 0.3) -- (0.1, 1.2);

\node[] at (0,\vv) {$\pi_i$};
\node[] at (\hh,\vv) {$\pi_{j}$};

\node[] at (0,0) {$\pi_{j}$};
\node[] at (\hh,0) {$\pi_i$};
\end{tikzpicture}
 \quad
\begin{tikzpicture}[xscale=0.9,yscale=0.9]
\def \vv{1.5cm}
\def \hh{1cm}
\draw[<->] (0.1, 0.3) -- (\hh - 0.1, 1.2);
\draw[<->] (\hh - 0.1, 0.3) -- (0.1, 1.2);

\node[] at (0,\vv) {$\pi_i$};
\node[] at (\hh,\vv) {$\opi_{j}$};

\node[] at (0,0) {$\opi_{j}$};
\node[] at (\hh,0) {$\pi_i$};
\end{tikzpicture}
 \quad
\begin{tikzpicture}[xscale=0.9,yscale=0.9]
\def \vv{1.5cm}
\def \hh{1cm}
\draw[<->] (0.1, 0.3) -- (\hh - 0.1, 1.2);
\draw[<->] (\hh - 0.1, 0.3) -- (0.1, 1.2);

\node[] at (0,\vv) {$\opi_i$};
\node[] at (\hh,\vv) {$\pi_{j}$};

\node[] at (0,0) {$\pi_{j}$};
\node[] at (\hh,0) {$\opi_i$};
\end{tikzpicture}
 \quad
\begin{tikzpicture}[xscale=0.9,yscale=0.9]
\def \vv{1.5cm}
\def \hh{1cm}
\draw[<->] (0.1, 0.3) -- (\hh - 0.1, 1.2);
\draw[<->] (\hh - 0.1, 0.3) -- (0.1, 1.2);

\node[] at (0,\vv) {$\opi_i$};
\node[] at (\hh,\vv) {$\opi_{j}$};

\node[] at (0,0) {$\opi_{j}$};
\node[] at (\hh,0) {$\opi_i$};
\end{tikzpicture}
\end{array}
\end{array}
\end{displaymath}
Thus, we conclude that $\pi_{\sigma} \opi_{\rho} = \pi_{u_1} \pi_{u_2} \cdots \widehat{\pi}_{u_r} \cdots \pi_{u_l} \opi_{v_1} \opi_{v_2} \cdots \opi_{v_m} \pi_{j_0}$.
\end{proof}

Given $h = \sum_{\sigma \in \SG_n} c_\sigma \pi_\sigma \in H_n(0)$, let us say that \emph{$\pi_\sigma$ appears at $h$} if $c_\sigma \neq 0$.
The following lemma plays a key role in describing the $H_n(0)$-action on $H_n(0) \pi_\sigma \opi_\rho$.

\begin{lemma}\label{lem: zero characterization}
For any $\sigma, \rho \in \SG_n$, the following hold. 
\begin{enumerate}[label = {\rm (\arabic*)}]
\item $\pi_\sigma \opi_\rho$ is nonzero if and only if $\ell(\sigma\rho) = \ell(\sigma) + \ell(\rho)$.
\item If $\ell(\sigma\rho) = \ell(\sigma) + \ell(\rho)$, then
$\sigma\rho$ is a unique element of maximal length in
$\{\gamma \in \SG_n \mid \text{$\pi_\gamma$ appears at $\pi_\sigma\opi_\rho$}\}$.
\item $\ell(\sigma\rho) = \ell(\sigma) + \ell(\rho)$ if and only if $\sigma \preceq_L w_0\rho^{-1}$.
\end{enumerate}
\end{lemma}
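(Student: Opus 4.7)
The plan is to dispatch (3) first by length arithmetic, then prove (1) forward together with (2) through a direct expansion, and finally address the reverse direction of (1) by induction on $\ell(\rho)$.

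For (3), I use the elementary identity $\ell(w_0 x) = \ell(w_0) - \ell(x)$, valid for all $x \in \SG_n$. The statement $\sigma \preceq_L w_0\rho^{-1}$ unfolds to the existence of $\gamma$ with $w_0\rho^{-1} = \gamma\sigma$ and $\ell(w_0\rho^{-1}) = \ell(\gamma) + \ell(\sigma)$; solving $\gamma = w_0\rho^{-1}\sigma^{-1}$ and rewriting both $\ell(w_0\rho^{-1}) = \ell(w_0) - \ell(\rho)$ and $\ell(w_0\rho^{-1}\sigma^{-1}) = \ell(w_0) - \ell(\sigma\rho)$ via the $w_0$-identity reduces the condition to $\ell(\sigma\rho) = \ell(\sigma) + \ell(\rho)$, which is (3).

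For (1) forward and (2) I fix reduced expressions $\sigma = s_{u_1}\cdots s_{u_l}$ and $\rho = s_{v_1}\cdots s_{v_m}$ and expand
\[
\pi_\sigma \opi_\rho \;=\; \pi_\sigma \prod_{j=1}^m (\pi_{v_j} - 1) \;=\; \sum_{S\subseteq [m]} (-1)^{m-|S|}\, \pi_{\sigma * \gamma_S},
\]
where $\gamma_S$ is the Demazure product of the subword $\{s_{v_j} : j \in S\}$ (taken in the order inherited from $[m]$) and $*$ denotes the Demazure product in $\SG_n$. Since $\ell(\sigma * \gamma_S) \le \ell(\sigma) + |S|$ for every $S$ and the length-additivity hypothesis forces $\sigma * \rho = \sigma\rho$, only $S = [m]$ produces a summand whose underlying permutation has length $\ell(\sigma) + m = \ell(\sigma\rho)$, and its coefficient is $(-1)^{0} = +1$. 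Every other summand contributes $\pi_\tau$ with $\ell(\tau) < \ell(\sigma\rho)$, so $\pi_{\sigma\rho}$ appears in $\pi_\sigma\opi_\rho$ with coefficient $1$ and is the unique element of maximal length appearing; this yields both the nonvanishing in (1) and the uniqueness claim in (2).

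The reverse direction of (1) I handle by induction on $\ell(\rho)$, exploiting $\pi_j\opi_j = \opi_j\pi_j = 0$. Pick $i \in \Des_L(\rho)$ and write $\rho = s_i\rho'$ with $\ell(\rho') = \ell(\rho) - 1$, so $\opi_\rho = \opi_i\opi_{\rho'}$. If $i \in \Des_R(\sigma)$, then $\pi_\sigma\pi_i = \pi_\sigma$ gives $\pi_\sigma\opi_i = 0$ and hence $\pi_\sigma\opi_\rho = 0$ at once. Otherwise $\ell(\sigma s_i) = \ell(\sigma) + 1$, so $\pi_\sigma\opi_i = \pi_{\sigma s_i} - \pi_\sigma$ and
\[
\pi_\sigma\opi_\rho \;=\; \pi_{\sigma s_i}\opi_{\rho'} \;-\; \pi_\sigma\opi_{\rho'}.
\]
The identity $(\sigma s_i)\rho' = \sigma\rho$ yields $\ell((\sigma s_i)\rho') = \ell(\sigma\rho) < \ell(\sigma s_i) + \ell(\rho')$, so the first summand vanishes by the induction hypothesis. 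The main obstacle is proving $\ell(\sigma\rho') < \ell(\sigma) + \ell(\rho')$, which kills the remaining summand by induction. I invoke the standard inversion formula $\ell(\sigma\tau) = \ell(\sigma) + \ell(\tau) - 2\,|\mathrm{Inv}(\sigma) \cap \mathrm{Inv}(\tau^{-1})|$, so the strict inequality is equivalent to the existence of a common inversion. Since $\rho'^{-1} = \rho^{-1}s_i$ is obtained from $\rho^{-1}$ by swapping the values at positions $i$ and $i+1$, the sets $\mathrm{Inv}(\rho^{-1})$ and $\mathrm{Inv}(\rho'^{-1})$ agree on pairs disjoint from $\{i, i+1\}$. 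Given any common inversion $(a, b) \in \mathrm{Inv}(\sigma) \cap \mathrm{Inv}(\rho^{-1})$---necessarily not $(i, i+1)$, since $\sigma(i) < \sigma(i+1)$ excludes $(i, i+1) \in \mathrm{Inv}(\sigma)$---a short case analysis on how $\{a, b\}$ meets $\{i, i+1\}$ produces a (possibly different) pair in $\mathrm{Inv}(\sigma) \cap \mathrm{Inv}(\rho'^{-1})$, leaning entirely on the driving inequalities $\sigma(i) < \sigma(i+1)$ and $\rho^{-1}(i) > \rho^{-1}(i+1)$. This case analysis is the main technical point of the argument.
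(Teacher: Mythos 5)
Your proof is correct. For part (3) and for the forward implication of (1) together with (2) it runs along essentially the same lines as the paper (the Demazure-product language is just a repackaging of the direct expansion of $\pi_\sigma(\pi_{v_1}-1)\cdots(\pi_{v_m}-1)$ and the observation that every summand with $S \subsetneq [m]$ yields a $\pi_w$ with $\ell(w) < \ell(\sigma\rho)$). Where you genuinely diverge is the converse implication of (1). The paper proves it by first establishing the technical Lemmas~\ref{lem: constraint of braid} and~\ref{lem: Des and last pi_i}, which show that one can rewrite $\pi_\sigma\opi_{\rho'}$ as a word from the alphabet $\{\pi_i,\opi_i\}$ whose final letter is $\pi_{v_{m'}}$ whenever $v_{m'}\in\Des_R(\sigma\rho')\setminus\Des_R(\rho')$, and then feeds $\opi_{v_{m'}}$ into $\pi_{v_{m'}}\opi_{v_{m'}}=0$. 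You instead induct on $\ell(\rho)$: peeling off a left descent $s_i$ of $\rho$, using $\pi_\sigma\opi_i=0$ when $i\in\Des_R(\sigma)$, and when $i\notin\Des_R(\sigma)$ splitting into $\pi_{\sigma s_i}\opi_{\rho'}-\pi_\sigma\opi_{\rho'}$ and killing both terms by the inductive hypothesis; the crux is the inversion-counting argument $\ell(\sigma\rho')<\ell(\sigma)+\ell(\rho')$, which you reduce to producing a common inversion of $\sigma$ and ${\rho'}^{-1}$ from a given common inversion of $\sigma$ and $\rho^{-1}$. I checked the case analysis on how $\{a,b\}$ meets $\{i,i+1\}$ and it does go through: the pairs $(a,i)$, $(a,i)$, $(i+1,b)$, $(i+1,b)$ serve when $b=i$, $b=i+1$, $a=i$, $a=i+1$ respectively, each time via the chains supplied by $\sigma(i)<\sigma(i+1)$ and $\rho^{-1}(i)>\rho^{-1}(i+1)$. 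The trade-off is that the paper's route requires the two preceding lemmas but then reuses them (e.g.\ the figure illustrating preservation of $\frakm$ under the braid relations also feeds Theorem~\ref{thm: embedding}), whereas your route is self-contained and more elementary, at the cost of an inversion case analysis that you should spell out in full if you want a complete write-up.
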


\begin{proof}
Let $s_{u_1}s_{u_2} \cdots s_{u_l}$ be a reduced expression of $\sigma$ and $s_{v_1}s_{v_2} \cdots s_{v_m}$ a reduced expression of $\rho$.

Let us prove the ``if'' part of (1) and (2) simultaneously.
For each $\gamma \in \SG_n$, let $c_\gamma$ be the integer defined by
\begin{align*}
\pi_\sigma \opi_\rho 
& = \pi_\sigma (\pi_{v_1} - 1) (\pi_{v_2} - 1) \cdots (\pi_{v_m} - 1)
= \pi_{\sigma\rho} + \sum_{\gamma \in \SG_n} c_\gamma \pi_{\gamma}.
\end{align*}
It is clear that $c_\gamma = 0$ for all $\gamma \in \SG_n$ with $\ell(\gamma) \ge \ell(\sigma\rho)$. Thus, $\sigma\rho$ is a unique element of maximal length in
$\{\gamma \in \SG_n \mid \text{$\pi_\gamma$ appears at $\pi_\sigma\opi_\rho$}\}$ and $\pi_\sigma \opi_\rho \neq 0$.

For the ``only if'' part of (1), we prove that if $\ell(\sigma\rho) < \ell(\sigma) + \ell(\rho)$, then $\pi_\sigma \opi_\rho = 0$.
Since $\ell(\sigma\rho) < \ell(\sigma) + \ell(\rho)$, there exists $1 \le m' \le m$ such that 
$$
\ell(\sigma s_{v_1} s_{v_2} \cdots s_{v_{m'}}) < \ell(\sigma s_{v_1} s_{v_2} \cdots s_{v_{m'-1}}),
$$ 
that is, $v_{m'} \in \Des_R(\sigma s_{v_1} s_{v_2} \cdots s_{v_{m'-1}})$.
Let $\rho' = s_{v_1}s_{v_2} \cdots s_{v_{m'-1}}$.
Since $s_{v_1}s_{v_2} \cdots s_{v_m}$ is a reduced expression of $\rho$, we have that 
$v_{m'} \in \Des_R(\sigma\rho') \setminus \Des_R(\rho')$. 
By Lemma~\ref{lem: Des and last pi_i}, there exists a word $\rmw = w_1 w_2 \cdots w_{l+m'-1}$ such that 
$\pi_\sigma \opi_{\rho'} = \iota(\rmw)$ and $w_{l+m'-1} = \pi_{v_{m'}}$.
But, since $\pi_i \opi_i = 0$ for all $1 \le i \le n-1$, this implies that
\begin{align*}
\pi_\sigma \opi_{\rho} 
& = \pi_\sigma \opi_{\rho'} \opi_{v_{m'}} \opi_{v_{m' + 1}} \cdots \opi_{v_{m}}  = \rpi_{i_1} \rpi_{i_2} \cdots \rpi_{i_{l+m'-2}} \pi_{v_{m'}} \opi_{v_{m'}} \opi_{v_{m' + 1}} \cdots \opi_{v_{m}} 
= 0.    
\end{align*}

To prove (3), note that for any $\gamma \in \SG_n$, $\ell(w_0 \gamma) = \ell(w_0) - \ell(\gamma)$ and $\ell(\gamma^{-1}) = \ell(\gamma)$.
This implies that $\ell(\sigma \rho) = \ell(\sigma) + \ell(\rho)$ if and only if $\ell(w_0\rho^{-1}\sigma^{-1}) = \ell(w_0\rho^{-1}) - \ell(\sigma)$.
Now our assertion is obvious from $(w_0\rho^{-1}\sigma^{-1})\sigma = w_0\rho^{-1}$.
\end{proof}

Now we are ready to state the main theorem of this subsection.

\begin{theorem}\label{thm: embedding}
Let $\sigma, \rho \in \SG_n$. 
\begin{enumerate}[label = {\rm (\arabic*)}]
\item The set 
$\basisI(\sigma,\rho) := \{ \pi_\gamma \opi_\rho \mid \gamma \in [\sigma, w_0\rho^{-1}]_L \}$ forms a $\mathbb C$-basis for $H_n(0)\pi_\sigma \opi_\rho$.
\item For any $\pi_{\gamma} \opi_{\rho} \in \basisI(\sigma,\rho)$ and $1 \le i \le n-1$,
\begin{align*}
\pi_i \cdot (\pi_\gamma \opi_\rho) 
& = \begin{cases}
\pi_{\gamma} \opi_{\rho}  & \text{if $i \in \Des_L(\gamma)$,} \\
\pi_{s_i\gamma} \opi_{\rho} & \text{if $i \notin \Des_L(\gamma)$.}
\end{cases} 
\end{align*}
Moreover, if $i \notin \Des_L(\gamma)$, then $\pi_{s_i\gamma} \opi_{\rho} = 0$ if and only if $i \in \Des_L(\gamma\rho)$.
\item
The linear map $\sfem:\sfB(\sigma,\rho) \ra H_n(0) \pi_\sigma \opi_{\rho^{-1}w_0}$ defined by
\[
\gamma \mapsto \pi_{\gamma}\opi_{\rho^{-1}w_0} \quad \text{for $\gamma \in [\sigma, \rho]_L$}
\]
is an $H_n(0)$-module isomorphism.
\end{enumerate}
\end{theorem}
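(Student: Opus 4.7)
The plan is to prove (1) first because it sets up both the basis and a length-filtration for $H_n(0)\pi_\sigma\opi_\rho$; (2) then becomes a direct computation using standard identities; and (3) is essentially a packaging of (1) and (2) once one notes $w_0(\rho^{-1}w_0)^{-1}=\rho$, so that $\basisI(\sigma,\rho^{-1}w_0)$ is precisely the image of $\sfem$.

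For the spanning half of (1), I would start from the fact that $\pi_\tau \pi_\sigma = \pi_\gamma$ for some $\gamma \succeq_L \sigma$, which is proved by induction on $\ell(\tau)$ using the elementary identities $\pi_i \pi_\eta = \pi_\eta$ if $i\in\Des_L(\eta)$ and $\pi_i\pi_\eta = \pi_{s_i\eta}$ otherwise (these follow from $\pi_i^2=\pi_i$ together with the choice of a reduced expression for $\eta$). Consequently every element of $H_n(0)\pi_\sigma\opi_\rho$ is a linear combination of terms $\pi_\gamma\opi_\rho$ with $\sigma\preceq_L\gamma$. If $\gamma\not\preceq_L w_0\rho^{-1}$, then $\ell(\gamma\rho)<\ell(\gamma)+\ell(\rho)$ by Lemma~\ref{lem: zero characterization}(3), so $\pi_\gamma\opi_\rho=0$ by Lemma~\ref{lem: zero characterization}(1); otherwise $\pi_\gamma\opi_\rho\in\basisI(\sigma,\rho)$. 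For linear independence, suppose $\sum_{\gamma\in[\sigma,w_0\rho^{-1}]_L} c_\gamma \pi_\gamma\opi_\rho=0$ and let $\gamma_0$ be of maximal length in the support. By Lemma~\ref{lem: zero characterization}(2), $\pi_{\gamma_0}\opi_\rho$ contributes $\pi_{\gamma_0\rho}$ with coefficient $1$, and no $\pi_\gamma\opi_\rho$ with $\gamma\ne\gamma_0$ in the support can contribute $\pi_{\gamma_0\rho}$ (either $\ell(\gamma\rho)<\ell(\gamma_0\rho)$, or $\ell(\gamma\rho)=\ell(\gamma_0\rho)$ but $\gamma\rho\ne\gamma_0\rho$ since right multiplication by $\rho$ is injective, and the only length-maximal term of $\pi_\gamma\opi_\rho$ is $\pi_{\gamma\rho}$). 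Hence $c_{\gamma_0}=0$, a contradiction.

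For (2), the identities above give $\pi_i\pi_\gamma=\pi_\gamma$ or $\pi_{s_i\gamma}$ according to whether $i\in\Des_L(\gamma)$, which yields the displayed formula after multiplying on the right by $\opi_\rho$. For the "moreover" clause, when $i\notin\Des_L(\gamma)$ I compare lengths: $\ell(s_i\gamma)+\ell(\rho)=\ell(\gamma\rho)+1$ (using $\gamma\in[\sigma,w_0\rho^{-1}]_L$), while $\ell(s_i(\gamma\rho))=\ell(\gamma\rho)\pm 1$. By Lemma~\ref{lem: zero characterization}(1), vanishing of $\pi_{s_i\gamma}\opi_\rho$ is equivalent to $\ell(s_i\gamma\rho)<\ell(s_i\gamma)+\ell(\rho)$, i.e. to $\ell(s_i(\gamma\rho))=\ell(\gamma\rho)-1$, which is exactly $i\in\Des_L(\gamma\rho)$. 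Part (3) is then immediate: $\sfem$ sends the basis $[\sigma,\rho]_L$ of $\sfB(\sigma,\rho)$ bijectively onto $\basisI(\sigma,\rho^{-1}w_0)$ by (1), so it is a $\mathbb C$-linear isomorphism, and the formulas in (2) coincide termwise with the $H_n(0)$-action of Proposition~\ref{prop: H_n(0)-action on IM} (with the vanishing case $s_i\gamma\notin[\sigma,\rho]_L$ matching $i\in\Des_L(\gamma\rho^{-1}w_0)$ via the same length comparison), so $\sfem$ is $H_n(0)$-equivariant.

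The main obstacle is the triangularity argument inside (1): one must know both that $\pi_\gamma\opi_\rho$ has $\pi_{\gamma\rho}$ as its unique maximal-length $\pi$-term with coefficient $1$, and that this maximal term uniquely identifies $\gamma$ among elements of $[\sigma,w_0\rho^{-1}]_L$. Lemma~\ref{lem: zero characterization}(2) supplies the first point, and the injectivity of right multiplication by $\rho$ supplies the second; beyond that, the remaining work is routine bookkeeping with the length function and the standard identities for $\pi_i\pi_\eta$.
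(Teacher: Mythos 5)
Your proof is correct, and it closely tracks the paper's argument in parts (1) and (2): both hinge on Lemma~\ref{lem: zero characterization} for spanning (parts (1) and (3) of the lemma show that $\pi_\gamma\opi_\rho$ vanishes precisely when $\gamma\not\preceq_L w_0\rho^{-1}$) and for the triangularity argument establishing linear independence (part (2) of the lemma isolates $\pi_{\gamma_0\rho}$ as a uniquely identifiable maximal-length term), and your length comparison for the ``moreover'' clause of (2) is exactly the computation compressed into the paper's one-line justification. Where you genuinely diverge is in part (3). The paper, after reducing via (1) and (2) to the equivalence ``$s_i\gamma\in[\sigma,\rho]_L$ iff $i\notin\Des_L(\gamma\rho^{-1}w_0)$'', proves it from scratch by an exchange-type argument introducing an auxiliary $\xi\in\SG_n$ with $\rho=\xi s_i\gamma$. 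You instead observe that this equivalence is already implicit in the preceding machinery: since $w_0(\rho^{-1}w_0)^{-1}=\rho$, Lemma~\ref{lem: zero characterization}(1),(3) shows $\pi_{s_i\gamma}\opi_{\rho^{-1}w_0}=0$ exactly when $s_i\gamma\not\preceq_L\rho$, while the ``moreover'' of (2) applied with $\rho^{-1}w_0$ in place of $\rho$ shows the same vanishing is equivalent to $i\in\Des_L(\gamma\rho^{-1}w_0)$; chaining these gives the equivalence for free. This is a cleaner derivation that makes the paper's separate combinatorial argument logically superfluous. The only place you are terse is the phrase ``via the same length comparison''---it would be worth spelling out that it is Lemma~\ref{lem: zero characterization}(1) and (3) that mediate between the vanishing of $\pi_{s_i\gamma}\opi_{\rho^{-1}w_0}$ and the interval membership $s_i\gamma\preceq_L\rho$---but the underlying reasoning is sound and complete.
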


\begin{proof}
The assertion (2) follows from the definition of $\Des_L(\gamma)$ and Lemma~\ref{lem: zero characterization}~(1).

For the assertion (1), we first observe that $H_n(0)\pi_\sigma\opi_\rho$ is spanned by $\{ \pi_\gamma\opi_\rho \mid \gamma \in [\sigma, w_0]_L \}$.
But, Lemma~\ref{lem: zero characterization}~(1) and Lemma~\ref{lem: zero characterization}~(3) ensure that  
$\pi_\gamma\opi_\rho = 0$ unless $\gamma \preceq_L w_0\rho^{-1}$.
To prove that $\basisI(\sigma,\rho)$ is linearly independent, we suppose that $\sum_{\gamma \in [\sigma, w_0\rho^{-1}]_L} c_\gamma \pi_\gamma \opi_\rho = 0$, but not all coefficients are zero.
Let $A:=\{\gamma \in [\sigma, w_0\rho^{-1}]_L \mid c_\gamma \neq 0 \}$ and 
choose a permutation $\gamma_0 \in A$ which is of maximal length in $A$.
Again, Lemma~\ref{lem: zero characterization}~(1) and Lemma~\ref{lem: zero characterization}~(3) ensure that $\pi_\gamma\opi_\rho$ is nonzero for all $\gamma \preceq_L  w_0\rho^{-1}$. 
Combining this fact with Lemma~\ref{lem: zero characterization}~(2) shows that $\pi_{\gamma_0\rho}$ cannot appear at $\pi_{\gamma'} \opi_\rho$ for any $\gamma' \in A \setminus \{\gamma_0\}$. 
It means that $\pi_{\gamma_0\rho}$ appears at $\sum_{\gamma \in [\sigma, w_0\rho^{-1}]_L} c_\gamma \pi_\gamma \opi_\rho = 0$, which is absurd.

Finally, let us prove the assertion (3).
Suppose that $\gamma \in [\sigma, \rho]_L$ and $i \notin \Des_L(\gamma)$.
For our purpose, by virtue of (1) and (2), we have only to prove that 
$s_i \gamma \in [\sigma,\rho]_L$ is equivalent to $i \notin \Des_L(\gamma \rho^{-1} w_0)$.
Let us first show that $s_i \gamma \in [\sigma,\rho]_L$ implies that $i \notin \Des_L(\gamma \rho^{-1} w_0)$.
Since $s_i \gamma \in [\sigma,\rho]_L$, there exists $\xi \in \SG_n$ such that
\begin{align}\label{eq: rho complement}
\rho = \xi s_i\gamma \quad \text{and} \quad \ell(\rho) = \ell(\xi) + \ell(s_i \gamma).
\end{align}
From the second equality in~\eqref{eq: rho complement} together with the assumption $i \notin \Des_L(\gamma)$, we deduce that   
$\ell(\xi) + \ell(s_i \gamma) = \ell(\xi) + \ell(s_i) + \ell(\gamma)$.
Applying this to the first equality in~\eqref{eq: rho complement} says 
that $i \notin \Des_R(\xi)$, equivalently, $i \in \Des_L(\xi^{-1}w_0)$ or equally $i \notin \Des_L(s_i\xi^{-1}w_0)$. 
Now, $i \notin \Des_L(\gamma \rho^{-1} w_0)$ is obvious since 
$\gamma \rho^{-1} w_0 = s_i \xi^{-1} w_0$ by the first equality in~\eqref{eq: rho complement}.
Next, let us show that $i\notin \Des_L(\gamma \rho^{-1} w_0)$ implies that $s_i \gamma \in [\sigma, \rho]_L$.
The assumption $i \notin \Des_L(\gamma\rho^{-1} w_0)$ says that 
there exists a permutation $\xi \in \SG_n$ such that $\xi (s_i \gamma) \rho^{-1} w_0 = w_0$ and $\ell(w_0) = \ell(\xi) + \ell(s_i \gamma) + \ell(\rho w_0)$,
thus $s_i \gamma \preceq_L \rho$.
\end{proof}

In Subsection \ref{subsec: 0-Hecke alg and QSym}, we introduced projective indecomposable modules 
$\calP_\alpha$ and irreducible modules $\bfF_\alpha$. To be precise, $\calP_\alpha = H_n(0) \pi_{w_0(\alpha^\rmc)} \opi_{w_0(\alpha)}$ 
and $\bfF_\alpha$ is isomorphic to $\rmtop(\calP_\alpha)$ and one-dimensional.
By applying Theorem~\ref{thm: embedding}, we derive the following isomorphisms of $H_n(0)$-modules: 
\begin{align*}
\calP_\alpha \cong \sfB(w_0(\alpha^\rmc), w_0 w_0(\alpha))
\quad \text{and} \quad
\bfF_\alpha \cong \sfB(w_0(\alpha^\rmc), w_0(\alpha^\rmc)).
\end{align*}

\begin{example}
Since $\rho^{-1} w_0 = 52314 \in \SG_5$ with $\rho = 24315$, by Theorem~\ref{thm: embedding}, we have
an $H_5(0)$-module isomorphism $\sfB(14325,24315) \ra H_5(0) \pi_{14325} \opi_{52314}$.
This is illustrated in the following figure:
\[
\def \hp{0.15}
\def \vp{0.3}
\def \hhp{4.15}
\def \hhhp{4}
\begin{tikzpicture}[baseline = 0mm]
\node[below] at (0.1,-3.6) {$\sfB(14325,24315)$};

\node at (0,0+2*\vp) {
$14325$
};
\node at (0.7-\hp,0.25+1.25*\vp) {} edge [out=40,in=320, loop] ();
\node at (1.4-\hp,0.7+1.25*\vp) {\small $\pi_2,\pi_3$};

\node at (1.5,-1.25+3*\vp) {$0$};
\draw [->] (0.5,-0.8+3*\vp) -- (1.25,-1.15+3*\vp);
\node at (1,-0.75+3*\vp) {\small $\pi_4$};

\draw [->] (0,-0.8+3*\vp) -- (0,-1.75+3*\vp);
\node[right] at (0.,-1.25+3*\vp) {\small $\pi_1$};

\node at (0,-2.5+4*\vp) {
$24315$
};
\node at (0.7-\hp,-2.75+4.75*\vp) {} edge [out=40,in=320, loop] ();
\node at (1.3-0*\hp,-2.3+4.75*\vp) {\small $\pi_1,\pi_3$};

\draw [->] (0,-3.0 +4*\vp) -- (0,-3.95+4*\vp);
\node[right] at (0,-3.5+4*\vp) {\small $\pi_2, \pi_4$};
\node at (0,-4.6+5*\vp) {$0$};

\node at (2.3,-2.5+4*\vp) {$\cong$};

\node[below] at (0.1 + \hhp,-3.6) {$H_5(0) \pi_{14325} \opi_{52314}$};

\node at (0 + \hhp,0+2*\vp) {
$\pi_{14325} \opi_{52314}$
};
\node at (1.2-\hp + \hhp, 0.25+1.25*\vp) {} edge [out=40,in=320, loop] ();
\node at (1.9-\hp + \hhp, 0.7+1.25*\vp) {\small $\pi_2,\pi_3$};

\node at (1.5 + \hhp, -1.25+3*\vp) {$0$};
\draw [->] (0.5 + \hhp, -0.8+3*\vp) -- (1.25 + \hhp,-1.15+3*\vp);
\node at (1 + \hhp, -0.75+3*\vp) {\small $\pi_4$};

\draw [->] (0 + \hhp, -0.8+3*\vp) -- (0 + \hhp, -1.75+3*\vp);
\node[right] at (0 + \hhp, -1.25+3*\vp) {\small $\pi_1$};

\node at (0 + \hhp, -2.5+4*\vp) {
$\pi_{24315} \opi_{52314}$
};
\node at (1.2-\hp + \hhp, -2.75+4.75*\vp) {} edge [out=40,in=320, loop] ();
\node at (1.9-0*\hp + \hhp, -2.3+4.75*\vp) {\small $\pi_1,\pi_3$};

\draw [->] (0 + \hhp, -3.0 +4*\vp) -- (0 + \hhp,-3.95+4*\vp);
\node[right] at (0 + \hhp, -3.5+4*\vp) {\small $\pi_2, \pi_4$};
\node at (0 + \hhp,-4.6+5*\vp) {$0$};
\end{tikzpicture}
\]
\end{example}

\begin{remark}\label{rem: Specht}
It is remarked in~\cite[Definition 4.3]{02DHT} that all the specializations $q = 0$ of the Specht modules of $H_n(q)$ are of the form $H_n(0) \opi_{\xi} \pi_{w_0(\alpha^\rmc)}$ for some $\alpha \models n$ and $\xi \in [w_0(\alpha), w_0 w_0(\alpha^\rmc)]_L$.
On the other hand, following the way as in Theorem~\ref{thm: embedding}, we can deduce that the $\C$-linear map 
$$
\overline{\sfem}: \osfB(\sigma,\rho) \ra H_n(0) \opi_\sigma \pi_{\rho^{-1}w_0}, \quad \gamma \mapsto \opi_\gamma \pi_{\rho^{-1} w_0} \quad \text{for $\gamma \in [\sigma,\rho]_L$}
$$ 
is an $H_n(0)$-module isomorphism.
Combining these results, we see that all the specializations $q = 0$ of the Specht modules of $H_n(q)$ appear as $\osfB(\xi, w_0 w_0(\alpha^\rmc))$ for some $\alpha\models n$ and $\xi \in [w_0(\alpha), w_0 w_0(\alpha^\rmc)]_L$.
\end{remark}

\subsection{Restriction and Mackey formula}\label{subsec: restriction and Mackey formula}
Throughout this subsection, $m$ and $n$ denote positive integers. 

Hivert, Novelli, and Thibon~\cite{06HNT} presented a formula on induction product of semi-combinatorial modules associated with Yang-Baxter intervals.
Using this formula, one can derive the following lemma. 

\begin{lemma}\label{lem: tensor product} {\rm (cf. \cite[Theorem 3.8]{06HNT})}
For any $\sigma, \rho \in \SG_m$ and $\sigma', \rho' \in \SG_n$, 
we have 
$$
\sfB(\sigma,\rho) \boxtimes \sfB(\sigma',\rho') 
\cong \sfB(\sigma \conc \sigma', ~ \rho \ostar \rho').
\footnote{
Following the notation in~\cite{06HNT}, $\sigma \conc \sigma' = \sigma \cdot \sigma'[m]$ and $\rho \ostar \rho' = \rho[n] \cdot \rho'$.
And, there is a typo in~\cite[Theorem 3.8]{06HNT}, where $\beta = \beta''[k]\cdot \beta'$ should appear as $\beta = \beta'[n-k] \cdot \beta''$.
}
$$
Here,
\begin{align*}
(\sigma \conc \sigma')(i) 
& := \begin{cases}
\sigma(i) & \text{if $1 \le i \le m$,} \\
\sigma'(i-m) + m & \text{if $m+1 \le i \le m+n$,}
\end{cases}
\end{align*}
and
\begin{align*}
(\rho \ostar \rho')(i)
& := \begin{cases}
\rho(i) + n & \text{if $1 \le i \le m$,} \\
\rho'(i-m) & \text{if $m+1 \le i \le m+n$.} \\
\end{cases}
\end{align*}
\end{lemma}

\begin{remark}
For $\sigma \in \SG_m$ and $\sigma' \in \SG_n$, let $\sigma \shuffle \sigma'$ be the set of permutations $\gamma \in \SG_{m+n}$ satisfying that $\sigma(1)\sigma(2)\cdots\sigma(m)$ and $(\sigma'(1)+m) (\sigma'(2)+m) \cdots (\sigma'(n)+m)$ are subwords of $\gamma(1) \gamma(2) \cdots \gamma(m+n)$ and $\sigma \tshuffle \sigma' := \{\gamma^{-1} \mid  \gamma \in \sigma^{-1} \, \shuffle \, \sigma'^{-1}\}$.
For $X \subseteq \SG_m$ and $Y \subseteq \SG_n$, let
\[
X \tshuffle Y := \bigcup_{\sigma \in X, \, \sigma' \in Y} \sigma \tshuffle \sigma'.
\]
It is not difficult to show that
$[\sigma \conc \sigma', \rho \ostar \rho']_L = [\sigma,\rho]_L \tshuffle [\sigma',\rho']_L$ for $\sigma, \rho \in \SG_m$ and $\sigma', \rho' \in \SG_n$.
Therefore, Lemma~\ref{lem: tensor product} can be rewritten as
$\sfB(\sigma,\rho) \boxtimes \sfB(\sigma',\rho') 
\cong
\sfB([\sigma,\rho]_L \tshuffle [\sigma',\rho']_L)$.
In particular, 
$
\sfB(\sigma,\sigma) \boxtimes \sfB(\rho, \rho) 
\cong
\sfB(\sigma \tshuffle \rho)
$, which can be regarded as a lift of
\begin{align*}
F_\alpha F_\beta = \sum_{\gamma \in \sigma \shuffle \rho} F_{\comp(\Des_R(\gamma))}.
\end{align*}
Here, $\sigma \in \SG_{\ell(\alpha)}$ and $\rho \in \SG_{\ell(\beta)}$ are arbitrary permutations satisfying $\Des_R(\sigma) = \set(\alpha)$ and $\Des_R(\rho) = \set(\beta)$.
\end{remark}

In contrast to induction product, restriction of semi-combinatorial modules associated with Yang-Baxter intervals
has not yet been well studied except for the simple and projective indecomposable modules (see~\cite{16Huang}). 
\footnote{
In fact, in~\cite{16Huang}, the author considered the induction product and restriction of the simple and projective indecomposable modules over the 0-Hecke algebra of not only type $A$ but also type $B$ and $D$.}
The main purpose of this section is to provide an explicit restriction rule for weak Bruhat interval modules.
To begin with, let us collect necessary notations.
Let $\matr{[m+n]}{m}$ be the set of $m$-element subsets of $[m+n]$, on which 
$\SG_{m+n}$ acts in the natural way, that is, $\gamma' \cdot J:=\gamma'(J)$.
Given $\sigma, \rho \in \SG_{m+n}$ and $J \in \matr{[m+n]}{m}$, let
\begin{equation}
\begin{aligned}\label{J-permutation}
\frakI(\sigma, \rho; J) & := \{\gamma \in [\sigma,\rho]_L \mid \gamma^{-1}([1,m]) = J \} \ \ \text{and}\\[1ex]
\scrS_{\sigma, \rho}^{(m)} & := \left\{J \in \matr{[m+n]}{m} \; \middle| \; \frakI(\sigma, \rho; J) \ne \emptyset \right\}.
\end{aligned}
\end{equation}
For instance, $\frakI(2134, 4312; \tre{\{2,3\}})=\{3\tre{12}4, 4\tre{12}3, 3\tre{21}4, 4\tre{21}3\}$. 
A simple calculation shows that $\scrS_{2134,4312}^{(2)} = \{ \{1,2\}, \{2,3\}, \{3,4\} \}$.

When $J=\{j_1<j_2<\cdots<j_m\}$, write $[m+n] \setminus J$
as $\{ j^\rmc_1 < j^\rmc_2 < \cdots <j^\rmc_n\}$. Let $\rmperm_J$ and $\rmperm^J$ be the permutations in $\SG_{m+n}$ given by
\[
\left\{\begin{array}{l}
\rmperm_J(j_k) = k, \\ 
\rmperm_J(j^\rmc_s) = m+s,
\end{array}\right.
\ \  \text{and} \ \ 
\left\{\begin{array}{l}
\rmperm^J(j_k) = m-k+1, \\ 
\rmperm^J(j^\rmc_s) = m+n-s+1,
\end{array}\right.
\ \  (1\le k \le m,~ 1 \le s \le n),
\]
respectively.
For instance,
$\rmperm_{\{2,3\}}=3124$ and $\rmperm^{\{2,3\}} = 4213$.
Note that 
\begin{align}\label{eq: J-interval}
\frakI(\id, w_0; J) = [\rmperm_{J},\rmperm^{J}]_L \quad \text{for any $J \in \matr{[m+n]}{m}$,}
\end{align}
which will be used in the proof of Lemma~\ref{lem: frakI nonempty}.
For each $J \in \matr{[m+n]}{m}$, let 
\begin{align}\label{eq: sigJ rhoJ}
\bfsigJ := \rmperm_{\sigma \cdot J} \sigma
\quad \text{and} \quad
\bfrhoJ := \rmperm^{ (w_0\rho) \cdot J} w_0 \rho.
\end{align}
The following lemma shows that $[\sigma, \rho]_L=\bigcup\limits_{J \in \scrS_{\sigma, \rho}^{(m)}} [\bfsigJ, \bfrhoJ]_L$.

\begin{lemma}\label{lem: frakI nonempty}
Let $\sigma, \rho \in \SG_{m+n}$. For $J \in \scrS_{\sigma, \rho}^{(m)}$, we have
\begin{align*}
\frakI(\sigma, \rho; J) = [\bfsigJ, \bfrhoJ]_L.
\end{align*}
\end{lemma}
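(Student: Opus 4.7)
The plan is to show that $\bfsigJ$ is the minimum and $\bfrhoJ$ the maximum of $\frakI(\sigma, \rho; J)$ under $\preceq_L$, and then combine this with \eqref{eq: J-interval} to extract the interval structure. A preliminary observation: for every $m$-subset $M \subseteq [m+n]$, a direct element-wise check gives $\rmperm^{w_0(M)} w_0 = \rmperm_M$; taking $M = \rho(J)$ rewrites $\bfrhoJ$ as $\rmperm_{\rho(J)}\,\rho$, placing it in the same shape as $\bfsigJ = \rmperm_{\sigma(J)}\,\sigma$ and making the two halves of the argument symmetric.

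For the minimum, fix any $\gamma \in \frakI(\sigma, \rho; J)$, which exists by the hypothesis $J \in \scrS_{\sigma, \rho}^{(m)}$, and set $K := \sigma(J)$. From $\sigma \preceq_L \gamma$, factor $\gamma = \tau \sigma$ reducedly with $\tau := \gamma \sigma^{-1}$. Since $\gamma(J) = [1,m]$ and $\sigma(J) = K$, we have $\tau(K) = [1,m]$, so $\tau \rmperm_K^{-1} \in \SG_I$ for $I = [m-1] \cup [m+1, m+n-1]$; write $\tau = \pi \rmperm_K$ with $\pi \in \SG_I$. Since $\rmperm_K^{-1} \in \frakR_{m,n}$, the parabolic decomposition of $\tau^{-1} = \rmperm_K^{-1} \pi^{-1}$ yields $\ell(\tau) = \ell(\pi) + \ell(\rmperm_K)$. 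The triangle-inequality squeeze
\begin{align*}
\ell(\gamma) = \ell(\pi) + \ell(\rmperm_K) + \ell(\sigma) \ge \ell(\pi) + \ell(\rmperm_K \sigma) \ge \ell(\pi \rmperm_K \sigma) = \ell(\gamma)
\end{align*}
then forces both inequalities to be equalities: the first reads $\sigma \preceq_L \bfsigJ$ and the second reads $\bfsigJ \preceq_L \gamma$. Together with the direct check $\bfsigJ^{-1}([1,m]) = \sigma^{-1}(K) = J$, this shows $\bfsigJ \in \frakI(\sigma, \rho; J)$ and that it is the minimum.

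A parallel argument handles the maximum. From $\gamma \preceq_L \rho$, factor $\rho = \tau \gamma$ reducedly; then $\tau([1,m]) = \rho(J) =: M$, so $\tau = \rmperm_M^{-1} \pi'$ with $\pi' \in \SG_I$, and the parabolic decomposition applied directly to $\tau$ gives $\ell(\tau) = \ell(\rmperm_M) + \ell(\pi')$. Using $\bfrhoJ = \rmperm_M \rho = \pi' \gamma$ together with the triangle inequalities $\ell(\pi' \gamma) \le \ell(\pi') + \ell(\gamma)$ and $\ell(\rho) \le \ell(\rmperm_M) + \ell(\bfrhoJ)$, the analogous squeeze forces $\gamma \preceq_L \bfrhoJ \preceq_L \rho$, so $\bfrhoJ$ is the maximum. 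Then the inclusion $\frakI(\sigma, \rho; J) \subseteq [\bfsigJ, \bfrhoJ]_L$ is immediate; for the reverse, any $\gamma \in [\bfsigJ, \bfrhoJ]_L$ satisfies $\sigma \preceq_L \gamma \preceq_L \rho$, and since $\bfsigJ, \bfrhoJ \in [\rmperm_J, \rmperm^J]_L$ by \eqref{eq: J-interval} (their preimages of $[1,m]$ are both $J$), one concludes $\rmperm_J \preceq_L \gamma \preceq_L \rmperm^J$, forcing $\gamma^{-1}([1,m]) = J$ by \eqref{eq: J-interval} again.

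The main obstacle will be the minimum step, specifically the correct placement of length-additivities. Since $\rmperm_K$ itself is generally \emph{not} in $\frakR_{m,n}$, the identity $\ell(\pi \rmperm_K) = \ell(\pi) + \ell(\rmperm_K)$ has to be obtained indirectly by passing to the inverse $\tau^{-1}$; and the two crucial equalities $\ell(\rmperm_K \sigma) = \ell(\rmperm_K) + \ell(\sigma)$ and $\ell(\pi \cdot \rmperm_K \sigma) = \ell(\pi) + \ell(\rmperm_K \sigma)$, neither of which is a priori apparent, must be extracted simultaneously from the length-squeeze rather than proved individually.
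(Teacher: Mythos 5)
Your proof is correct, and it takes a genuinely different route from the paper's. The paper invokes the poset isomorphism $f_{\gamma}\colon [\sigma,\rho]_L \ra [\sigma\gamma,\rho\gamma]_L$ given by right multiplication, noting that it restricts to a bijection $\frakI(\sigma,\rho;J)\ra\frakI(\sigma\gamma,\rho\gamma;\gamma^{-1}\cdot J)$; applying $f_{\sigma^{-1}}$ transports the problem to an interval with left endpoint $\id$, where \eqref{eq: J-interval} and the identity $\frakI(\id,\rho\sigma^{-1};\sigma\cdot J)=[\id,\rho\sigma^{-1}]_L\cap[\rmperm_{\sigma\cdot J},\rmperm^{\sigma\cdot J}]_L$ make the extremality of $\rmperm_{\sigma\cdot J}$ essentially a transitivity observation, after which one translates back (and similarly for the sink). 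You instead stay inside $[\sigma,\rho]_L$ throughout and extract both extremality and membership from the parabolic decomposition of $\gamma\sigma^{-1}$ (resp.\ $\rho\gamma^{-1}$) via the length-squeeze; the paper's reduction trick spares one from disentangling the two length-additivities, while your argument is more self-contained at the level of the Coxeter combinatorics. Two features of yours worth flagging as genuine additions: the identity $\rmperm^{w_0\cdot M}w_0=\rmperm_M$, which symmetrizes $\bfsigJ$ and $\bfrhoJ$ into the single shape $\rmperm_{\bullet(J)}\cdot\bullet$ (the paper never needs this, handling the sink dually), and the explicit final paragraph establishing the reverse inclusion $[\bfsigJ,\bfrhoJ]_L\subseteq\frakI(\sigma,\rho;J)$ via \eqref{eq: J-interval}, a step the paper's proof leaves implicit after only asserting source and sink membership and extremality.
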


\begin{proof}
Let us fix $J \in \scrS_{\sigma, \rho}^{(m)}$.
It is well known that given $\gamma \in \SG_{m+n}$, if $\sigma \preceq_L \rho$ and $\sigma\gamma \preceq_L \rho\gamma$, then there exists an isomorphism $f_{\gamma}: [\sigma, \rho]_L \ra [\sigma \gamma, \rho \gamma]_L$ defined by $\xi \mapsto \xi \gamma$
(for instance, see~\cite[Proposition 3.1.6]{05BB}).
By the definition of $\frakI(\sigma, \rho; J)$ in~\eqref{J-permutation}, for all $\gamma \in \SG_{m+n}$, $\xi \in \frakI(\sigma, \rho; J)$ if and only if $\xi \gamma \in \frakI(\sigma\gamma, \rho\gamma; \gamma^{-1} \cdot J)$.
Indeed, the restriction
\begin{align}\label{eq: poset iso}
f_{\gamma}|_{\frakI(\sigma, \rho; J)} : \frakI(\sigma, \rho; J) \ra \frakI(\sigma\gamma, \rho\gamma; \gamma^{-1} \cdot J)
\end{align}
is an isomorphism of posets. 

Since $J \in \scrS_{\sigma,\rho}^{(m)}$, the set $f_{\sigma^{-1}}(\frakI(\sigma, \rho; J)) = \frakI(\id, \rho\sigma^{-1}; \sigma \cdot J)$ is nonempty.
Combining~\eqref{eq: J-interval} with the equality $\frakI(\id, \rho\sigma^{-1}; \sigma \cdot J)=[\id, \rho\sigma^{-1}]_L \cap \frakI(\id, w_0; \sigma \cdot J)$ yields that $\rmperm_{\sigma \cdot J} \preceq_L \xi$ for any $\xi \in \frakI(\id, \rho\sigma^{-1}; \sigma \cdot J)$.
Since $[\id, \xi]_L \subseteq [\id, \rho \sigma^{-1}]_L$,
$\rmperm_{\sigma \cdot J}$ is contained in $[\id, \rho \sigma^{-1}]_L$.
Therefore, $\rmperm_{\sigma \cdot J}  \preceq_L \xi$ for all $\xi \in \frakI(\id, \rho \sigma^{-1}; \sigma \cdot J)=[\id, \rho \sigma^{-1}]_L \cap \frakI(\id, w_0; \sigma \cdot J)$.
Via the isomorphism~\eqref{eq: poset iso}, we have $\bfsigJ \in \frakI(\sigma, \rho; J)$  and
$\bfsigJ 
\preceq_L \xi$ for all $\xi \in \frakI(\sigma, \rho; J)$.
In the same manner, 
one can prove that $\bfrhoJ \in \frakI(\sigma, \rho; J)$  and 
$\xi \preceq_L \bfrhoJ$ for all $\xi \in \frakI(\sigma, \rho; J)$.
 \end{proof}

By a careful reading of the proof of Lemma~\ref{lem: frakI nonempty}, one can derive that
\[
\scrS_{\sigma, \rho}^{(m)} = \left\{J \in \matr{[m+n]}{m} \; \middle| \; \rmperm_{\sigma \cdot J} \in [\id, \rho \sigma^{-1}]_L \right\}.
\]

\begin{example}
For $\sigma = 2134$ and $\rho = 4312$,  
\[
\scrS_{\sigma, \rho}^{(2)} = \{\{1,2\}, \{2,3\}, \{3,4\}\}.
\]
One can easily calculate that
\begin{align*}
\sigma_{\{1,2\}} = 2134, \quad
\sigma_{\{2,3\}} = 3124, \quad 
\sigma_{\{3,4\}} = 4312
\end{align*}
and
\begin{align*}
\rho^{\{1,2\}} = 2134, \quad
\rho^{\{2,3\}} = 4213, \quad
\rho^{\{3,4\}} = 4312.
\end{align*}
Thus, by Lemma~\ref{lem: frakI nonempty}~(2),
\begin{align*}
[2134, 4312]_L
& = [2134, 2134]_L \cup [3124, 4213]_L \cup [4312,4312]_L.
\end{align*}
Figure~\ref{fig: restriction} illustrates this partition. 
\end{example}

Given $\gamma \in \SG_{m+n}$, let $ \gamma^{-1}([1,m])=\{i_1 < i_2 < \cdots < i_m\}$ 
and $ \gamma^{-1}([m+1,m+n])=\{i'_1 < i'_2 < \cdots < i'_n\}$.
Let $\gamma_{\le m} \in \SG_m$ and $\gamma_{> m} \in \SG_n$ be the permutations given by
\begin{align}\label{eq: gamma >, <}
\gamma_{\le m}(j) = \gamma(i_j) \quad (1\le j \le m) 
\quad \text{and} \quad
\gamma_{> m}(j) = \gamma(i'_j) - m \quad  (1\le j \le n).
\end{align}
For instance, if $m=3$, $n=5$, and $\gamma = 58\textcolor{red}{32}6\textcolor{red}{1}47$, then $\gamma_{\le 3} = \textcolor{red}{321}$ and $\gamma_{> 3} = 25314$.

\begin{theorem}\label{thm: restriction}
For $\sigma, \rho \in \SG_{m+n}$, we have
\begin{align*}
\sfB(\sigma,\rho)\downarrow_{H_m(0) \otimes H_n(0)}^{H_{m+n}(0)} 
\hspace{1ex} \cong \hspace{-0.5ex} \bigoplus_{J\in \scrS_{\sigma, \rho}^{(m)} }
\sfB((\bfsigJ)_{\le m},  (\bfrhoJ)_{\le m}) \otimes \sfB((\bfsigJ)_{> m}, (\bfrhoJ)_{> m}).
\end{align*}
\end{theorem}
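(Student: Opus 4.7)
The plan is to partition the basis $[\sigma,\rho]_L$ of $\sfB(\sigma,\rho)$ according to the invariant $J := \gamma^{-1}([1,m])$, verify that each part is a submodule for $H_m(0)\otimes H_n(0)$, and identify each part with the prescribed tensor product on the right-hand side.

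First I would observe that $\gamma\mapsto\gamma^{-1}([1,m])$ partitions $[\sigma,\rho]_L$ into the nonempty fibres $\frakI(\sigma,\rho;J)$ indexed by $J\in\scrS_{\sigma,\rho}^{(m)}$, and Lemma~\ref{lem: frakI nonempty} identifies each such fibre with the weak Bruhat interval $[\bfsigJ,\bfrhoJ]_L$. This yields the vector-space decomposition $\sfB(\sigma,\rho)=\bigoplus_{J} \C[\bfsigJ,\bfrhoJ]_L$. For any $i\in[m+n-1]\setminus\{m\}$, left multiplication by $s_i$ swaps the values $i$ and $i+1$ in the one-line notation, and these values both lie on the same side of $m$; hence $\gamma^{-1}([1,m])$ is preserved, and it follows at once from~\eqref{Hecke algebra action} that each summand $\C[\bfsigJ,\bfrhoJ]_L$ is stable under the subalgebra $H_m(0)\otimes H_n(0)$.

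Next, for each $J$ I would define
\[
\Phi_J:\C[\bfsigJ,\bfrhoJ]_L\;\longrightarrow\; \sfB((\bfsigJ)_{\le m},(\bfrhoJ)_{\le m})\otimes \sfB((\bfsigJ)_{> m},(\bfrhoJ)_{> m}),\quad \gamma\mapsto \gamma_{\le m}\otimes \gamma_{> m},
\]
and claim that it is an isomorphism of $H_m(0)\otimes H_n(0)$-modules. The hard part will be verifying that $\Phi_J$ is a well-defined bijection; equivalently, that on the fibre $\Xi_J:=\{\gamma\in\SG_{m+n}:\gamma^{-1}([1,m])=J\}$, the restriction of $\preceq_L$ corresponds under $\gamma\mapsto(\gamma_{\le m},\gamma_{> m})$ to the product of left weak Bruhat orders on $\SG_m$ and $\SG_n$. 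The key input is the length identity
\[
\ell(\gamma)=\ell(\gamma_{\le m})+\ell(\gamma_{> m})+\ell(\rmperm_J)\quad\text{for all }\gamma\in\Xi_J,
\]
which I would prove by sorting the inversions of $\gamma$ according to whether both positions lie in $J$, both in $J^c$, or are split. Since any two elements of $\Xi_J$ differ by left multiplication by an element of the parabolic subgroup $\SG_{[1,m]}\times\SG_{[m+1,m+n]}$, combining this identity with the length additivity on that product subgroup yields $\gamma\preceq_L\gamma'$ iff $\gamma_{\le m}\preceq_L \gamma'_{\le m}$ and $\gamma_{> m}\preceq_L \gamma'_{> m}$. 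Since $\Phi_J$ sends $\bfsigJ$ and $\bfrhoJ$ to the endpoints of the product interval, the bijection onto the product interval follows.

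Finally, equivariance of $\Phi_J$ reduces to a direct check. For $i<m$ and $\gamma\in[\bfsigJ,\bfrhoJ]_L$: the relative order of the values $i$ and $i+1$ depends only on the $[1,m]$-subword of $\gamma$, so $i\in\Des_L(\gamma)\Leftrightarrow i\in\Des_L(\gamma_{\le m})$; when $i\notin\Des_L(\gamma)$ we have $(s_i\gamma)_{\le m}=s_i\gamma_{\le m}$ and $(s_i\gamma)_{> m}=\gamma_{> m}$; and the condition $s_i\gamma\in[\bfsigJ,\bfrhoJ]_L$ translates, by the preceding paragraph, to $s_i\gamma_{\le m}\in[(\bfsigJ)_{\le m},(\bfrhoJ)_{\le m}]_L$. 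Hence $\Phi_J(\pi_i\cdot\gamma)$ agrees with $(\pi_i\otimes 1)\cdot(\gamma_{\le m}\otimes \gamma_{> m})$. The analogous argument for $i>m$, where $\pi_i\in H_{m+n}(0)$ corresponds to $1\otimes\pi_{i-m}$, completes the proof.
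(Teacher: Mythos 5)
Your proof is correct and follows the same approach as the paper: partitioning $[\sigma,\rho]_L$ by the invariant $J=\gamma^{-1}([1,m])$, invoking Lemma~\ref{lem: frakI nonempty} to identify the fibres with the intervals $[\bfsigJ,\bfrhoJ]_L$, checking closure under the $\pi_i$ for $i\ne m$, and splitting each fibre via $\gamma\mapsto(\gamma_{\le m},\gamma_{>m})$. The only difference is that you supply an explicit length-additivity argument $\ell(\gamma)=\ell(\gamma_{\le m})+\ell(\gamma_{>m})+\ell(\rmperm_J)$ to justify that this map is an order-isomorphism onto the product interval, a step the paper asserts without detail.
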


\begin{proof}
For each $J \in \scrS_{\sigma, \rho}^{(m)}$, we observe that 
$\frakI(\sigma, \rho; J) \cup \{0\}$ is closed under the $\pi_i$-action for $i \in [m + n - 1] \setminus \{m\}$. 
This means, by virtue of Lemma~\ref{lem: frakI nonempty}, that  
$\C \, [\bfsigJ, \bfrhoJ]_L$ is an $H_m(0) \otimes H_n(0)$-module.  
Since 
\[
[\sigma,\rho]_L = \bigcup_{J \in \scrS_{\sigma, \rho}^{(m)}} [\bfsigJ, \bfrhoJ]_L,
\]
it follows that 
\begin{align*}
\sfB(\sigma,\rho)\downarrow_{H_m(0) \otimes H_n(0)}^{H_{m+n}(0)} \ 
\hspace{0.5ex} \cong \hspace{-0.5ex} \bigoplus_{J\in \scrS_{\sigma, \rho}^{(m)}} \C \, [\bfsigJ, \bfrhoJ]_L \quad \text{(as $H_m(0) \otimes H_n(0)$-modules).}
\end{align*}
On the other hand, $[\bfsigJ, \bfrhoJ]_L$ is in bijection with $[(\bfsigJ)_{\le m},  (\bfrhoJ)_{\le m}]_L \times [(\bfsigJ)_{> m}, (\bfrhoJ)_{> m}]_L$ under $\gamma \mapsto (\gamma_{\le m}, \gamma_{>m})$, which again induces
an $H_m(0) \otimes H_n(0)$-module isomorphism 
$$\C \, [\bfsigJ, \bfrhoJ]_L \to \sfB((\bfsigJ)_{\le m},  (\bfrhoJ)_{\le m}) \otimes \sfB((\bfsigJ)_{> m}, (\bfrhoJ)_{> m}), \quad \gamma \mapsto (\gamma_{\le m}, \gamma_{>m}),$$
as required. 
\end{proof}

\begin{example}\label{ex: restriction}
Using Theorem~\ref{thm: restriction}, 
we derive that 
$\sfB(2134,4312)\downarrow^{H_4(0)}_{H_2(0)\otimes H_2(0)}$ is isomorphic to
\[
(\sfB(21,21) \otimes \sfB(12,12)) 
\oplus (\sfB(12,21) \otimes \sfB(12,21)) \oplus (\sfB(12,12) \otimes \sfB(21,21)).
\]
The difference between the $H_4(0)$-action and the $H_2(0) \otimes H_2(0)$-action on $\sfB(2134,4312)$
is well illustrated in Figure~\ref{fig: restriction}. 
\end{example}

\begin{figure}[t]
\centering
\def \hp{0.25}
\def \wp{0.2}
\def \wtab{1.95}
\def \htab{1.5}
\begin{tikzpicture}[baseline = 0mm, scale = 0.7]

\node[below] at (0,-0.7) {$\sfB(2134,4312)$};
\node at (0,0) {\color{lightgray}$4321$};
\node at (-\wtab,1*\htab) {\color{lightgray}$4231$};
\node at (0,1*\htab) {\color{black}$4312$};
\node at (\wtab,1*\htab) {\color{lightgray}$3421$};
\node at (-\wtab*2,2*\htab) {\color{lightgray}$4132$};
\node at (-\wtab,2*\htab) {\color{black}$4213$};
\node at (0,2*\htab) {\color{lightgray}$3241$};
\node at (\wtab,2*\htab) {\color{lightgray}$3412$};
\node at (\wtab*2,2*\htab) {\color{lightgray}$2431$};
\node at (-\wtab/2*5,3*\htab) {\color{black}$4123$};
\node at (-\wtab/2*3,3*\htab) {\color{black}$3214$};
\node at (-\wtab/2,3*\htab) {\color{lightgray}$3142$};
\node at (\wtab/2,3*\htab) {\color{lightgray}$2413$};
\node at (\wtab/2*3,3*\htab) {\color{lightgray}$1432$};
\node at (\wtab/2*5,3*\htab) {\color{lightgray}$2341$};
\node at (-\wtab*2,4*\htab) {\color{black}$3124$};
\node at (-\wtab,4*\htab) {\color{lightgray}$2314$};
\node at (0,4*\htab) {\color{lightgray}$2143$};
\node at (\wtab,4*\htab) {\color{lightgray}$1423$};
\node at (\wtab*2,4*\htab) {\color{lightgray}$1342$};
\node at (-\wtab,5*\htab) {\color{black}$2134$};
\node at (0,5*\htab) {\color{lightgray}$1324$};
\node at (\wtab,5*\htab) {\color{lightgray}$1243$};
\node at (0,6*\htab) {\color{lightgray}$1234$};

\draw [lightgray, ->] (-\wp,6*\htab-\hp) -- (-\wtab+\wp,5*\htab+\hp);
\draw [lightgray, ->] (0,6*\htab-\hp) -- (0,5*\htab+\hp);
\draw [lightgray, ->] (\wp, 6*\htab -\hp) -- (\wtab-\wp,5*\htab+\hp);

\draw [black, ->] (-\wtab-\wp,5*\htab - \hp) -- (-\wtab*2+\wp,4*\htab+\hp);
\node at (-1.75*\wtab+\wp,4.8*\htab-\hp) {\scriptsize $\pi_2$};
\node at (-0.7*\wtab-\wp,5.25*\htab - \hp) {} edge [out=40,in=320, loop] ();
\node at (-0.15*\wtab-\wp,5.4*\htab - \hp) {\scriptsize $\pi_1$};
\draw [lightgray, ->] (\wp-\wtab,5*\htab - \hp) -- (0-\wp,4*\htab+\hp);
\node at (-0.44*\wtab, 4.45*\htab+\hp) {\color{lightgray}\scriptsize $\pi_3$};
\draw [lightgray, ->] (-\wp,5*\htab - \hp) -- (-\wtab+\wp,4*\htab+\hp);
\draw [lightgray, ->] (\wp,5*\htab - \hp) -- (\wtab-\wp,4*\htab+\hp);
\draw [lightgray, ->] (-\wp+\wtab,5*\htab - \hp) -- (0+\wp,4*\htab+\hp);
\draw [lightgray, ->] (\wp+\wtab,5*\htab - \hp) -- (2*\wtab-\wp,4*\htab+\hp);

\draw [black, ->] (-\wtab*2-\wp,4*\htab-\hp) -- (-\wtab/2*5+\wp,3*\htab+\hp);
\draw [black, ->] (-\wtab*2+\wp,4*\htab-\hp) -- (-\wtab/2*3-\wp,3*\htab+\hp);
\node at (-\wtab*2.55+\wp,3.7*\htab-\hp) {\scriptsize $\pi_3$};
\node at (-\wtab*1.7+\wp,3.7*\htab-\hp) {\scriptsize $\pi_1$};
\node at (-1.7*\wtab-\wp,4.25*\htab - \hp) {} edge [out=40,in=320, loop] ();
\node at (-1.15*\wtab-\wp,4.4*\htab - \hp) {\scriptsize $\pi_2$};
\draw [lightgray, ->] (-\wtab-\wp,4*\htab-\hp) -- (-\wtab/2*3+\wp,3*\htab+\hp);
\draw [lightgray, ->] (-\wtab+\wp,4*\htab-\hp) -- (\wtab/2-\wp,3*\htab+\hp);
\draw [lightgray, ->] (-\wp,4*\htab-\hp) -- (-\wtab/2+\wp,3*\htab+\hp);
\draw [lightgray, ->] (\wtab-\wp,4*\htab-\hp) -- (\wtab/2+\wp,3*\htab+\hp);
\draw [lightgray, ->] (\wtab+\wp,4*\htab-\hp) -- (\wtab/2*3-\wp,3*\htab+\hp);
\draw [lightgray, ->] (2*\wtab-\wp,4*\htab-\hp) -- (\wtab/2*3+\wp,3*\htab+\hp);
\draw [lightgray, ->] (2*\wtab+\wp,4*\htab-\hp) -- (\wtab/2*5-\wp,3*\htab+\hp);

\draw [lightgray, ->] (-\wtab/2*5+\wp,3*\htab-\hp) -- (-\wtab*2-\wp,2*\htab+\hp);
\node at (-\wtab*2.2+\wp,2.7*\htab-\hp) {\scriptsize \color{lightgray} $\pi_2$};
\draw [black, ->] (-\wtab*2.3+\wp,3*\htab-\hp) -- (-1.2*\wtab-\wp,2*\htab+\hp);
\node at (-\wtab*1.7+\wp,2.7*\htab-\hp) {\scriptsize $\pi_1$};
\draw [lightgray, ->] (-\wtab/2-\wp,3*\htab-\hp) -- (-\wtab*2+\wp,2*\htab+\hp);
\draw [lightgray, ->] (-\wtab/2+\wp,3*\htab-\hp) -- (-\wp,2*\htab+\hp);
\draw [lightgray, ->] (\wtab/2+\wp,3*\htab-\hp) -- (\wtab-\wp,2*\htab+\hp);
\draw [lightgray, ->] (\wtab/2*3+\wp,3*\htab-\hp) -- (\wtab*2-\wp,2*\htab+\hp);
\draw [lightgray, ->] (\wtab/2*5-\wp,3*\htab-\hp) -- (\wp,2*\htab+\hp);
\draw [lightgray, ->] (\wtab/2*5,3*\htab-\hp) -- (\wtab*2+\wp,2*\htab+\hp);
\draw [black, ->] (-\wtab/2*3+\wp,3*\htab-\hp) -- (-\wtab-\wp,2*\htab+\hp);
\node at (-\wtab*1.2+\wp,2.7*\htab-\hp) {\scriptsize $\pi_3$};
\node at (-1.2*\wtab-\wp,3.25*\htab - \hp) {} edge [out=40,in=320, loop] ();
\node at (-0.7*\wtab-\wp,3.55*\htab - \hp) {\scriptsize $\pi_1, \pi_2$};
\node at (-2.2*\wtab-\wp,3.25*\htab - \hp) {} edge [out=40,in=320, loop] ();
\node at (-1.8*\wtab-\wp,3.55*\htab - \hp) {\scriptsize $\pi_3$};

\draw [lightgray, ->] (-\wtab*2+\wp,2*\htab-\hp) -- (-\wtab-\wp, \htab+\hp);
\draw [lightgray, ->] (-\wp,2*\htab-\hp) -- (-\wtab+\wp, \htab+\hp);
\draw [lightgray, ->] (\wtab-\wp,2*\htab-\hp) -- (\wp, \htab+\hp);
\draw [lightgray, ->] (\wtab,2*\htab-\hp) -- (\wtab, \htab+\hp);
\draw [lightgray, ->] (2*\wtab-\wp,2*\htab-\hp) -- (\wtab+\wp, \htab+\hp);
\draw [black, ->] (-\wtab+\wp,2*\htab-\hp) -- (-\wp, \htab+\hp);
\node at (-0.55*\wtab+\wp,1.8*\htab-\hp) {\scriptsize $\pi_2$};
\node at (-0.7*\wtab-\wp,2.25*\htab - \hp) {} edge [out=40,in=320, loop] ();
\node at (-0.19*\wtab-\wp,2.55*\htab - \hp) {\scriptsize $\pi_1, \pi_3$};

\draw [lightgray, ->] (-\wtab+\wp,\htab-\hp) -- (-\wp, \hp);
\draw [lightgray, ->] (0,\htab-\hp) -- (0, \hp);
\node at (\wtab*0.05+\wp,0.7*\htab-\hp) {\scriptsize \color{lightgray} $\pi_1$};
\draw [lightgray, ->] (\wtab-\wp,\htab-\hp) -- (\wp, \hp);

\node at (0.3*\wtab-\wp,1.25*\htab - \hp) {} edge [out=40,in=320, loop] ();
\node at (0.8*\wtab-\wp,1.55*\htab - \hp) {\scriptsize $\pi_2, \pi_3$};

\end{tikzpicture}
\begin{tikzpicture}[baseline = 0mm, scale = 0.7]
\node[below] at (0.7,-0.7) {$\sfB(2134,4312)\downarrow^{H_4(0)}_{H_2(0)\otimes H_2(0)}$};
\node at (0,0) {\color{lightgray}$4321$};
\node at (-\wtab,1*\htab) {\color{lightgray}$4231$};
\node at (0,1*\htab) {\color{black}$4312$};
\node at (\wtab,1*\htab) {\color{lightgray}$3421$};
\node at (-\wtab*2,2*\htab) {\color{lightgray}$4132$};
\node at (-\wtab,2*\htab) {\color{black}$4213$};
\node at (0,2*\htab) {\color{lightgray}$3241$};
\node at (\wtab,2*\htab) {\color{lightgray}$3412$};
\node at (\wtab*2,2*\htab) {\color{lightgray}$2431$};
\node at (-\wtab/2*5,3*\htab) {\color{black}$4123$};
\node at (-\wtab/2*3,3*\htab) {\color{black}$3214$};
\node at (-\wtab/2,3*\htab) {\color{lightgray}$3142$};
\node at (\wtab/2,3*\htab) {\color{lightgray}$2413$};
\node at (\wtab/2*3,3*\htab) {\color{lightgray}$1432$};
\node at (\wtab/2*5,3*\htab) {\color{lightgray}$2341$};
\node at (-\wtab*2,4*\htab) {\color{black}$3124$};
\node at (-\wtab,4*\htab) {\color{lightgray}$2314$};
\node at (0,4*\htab) {\color{lightgray}$2143$};
\node at (\wtab,4*\htab) {\color{lightgray}$1423$};
\node at (\wtab*2,4*\htab) {\color{lightgray}$1342$};
\node at (-\wtab,5*\htab) {\color{black}$2134$};
\node at (0,5*\htab) {\color{lightgray}$1324$};
\node at (\wtab,5*\htab) {\color{lightgray}$1243$};
\node at (0,6*\htab) {\color{lightgray}$1234$};

\draw [lightgray, ->] (-\wp,6*\htab-\hp) -- (-\wtab+\wp,5*\htab+\hp);
\draw [lightgray, ->] (0,6*\htab-\hp) -- (0,5*\htab+\hp);
\draw [lightgray, ->] (\wp, 6*\htab -\hp) -- (\wtab-\wp,5*\htab+\hp);
\draw [lightgray, ->] (-\wtab-\wp,5*\htab - \hp) -- (-\wtab*2+\wp,4*\htab+\hp);
\node at (-0.7*\wtab-\wp,5.25*\htab - \hp) {} edge [out=40,in=320, loop] ();
\node at (-0.15*\wtab-\wp,5.4*\htab - \hp) {\scriptsize $\pi_1$};
\draw [lightgray, ->] (\wp-\wtab,5*\htab - \hp) -- (0-\wp,4*\htab+\hp);
\node at (-0.44*\wtab, 4.45*\htab+\hp) {\color{lightgray}\scriptsize $\pi_3$};
\draw [lightgray, ->] (-\wp,5*\htab - \hp) -- (-\wtab+\wp,4*\htab+\hp);
\draw [lightgray, ->] (\wp,5*\htab - \hp) -- (\wtab-\wp,4*\htab+\hp);
\draw [lightgray, ->] (-\wp+\wtab,5*\htab - \hp) -- (0+\wp,4*\htab+\hp);
\draw [lightgray, ->] (\wp+\wtab,5*\htab - \hp) -- (2*\wtab-\wp,4*\htab+\hp);
\draw [black, ->] (-\wtab*2-\wp,4*\htab-\hp) -- (-\wtab/2*5+\wp,3*\htab+\hp);
\draw [black, ->] (-\wtab*2+\wp,4*\htab-\hp) -- (-\wtab/2*3-\wp,3*\htab+\hp);
\node at (-\wtab*2.55+\wp,3.7*\htab-\hp) {\scriptsize $\pi_3$};
\node at (-\wtab*1.7+\wp,3.7*\htab-\hp) {\scriptsize $\pi_1$};
\draw [lightgray, ->] (-\wtab-\wp,4*\htab-\hp) -- (-\wtab/2*3+\wp,3*\htab+\hp);
\draw [lightgray, ->] (-\wtab+\wp,4*\htab-\hp) -- (\wtab/2-\wp,3*\htab+\hp);
\draw [lightgray, ->] (-\wp,4*\htab-\hp) -- (-\wtab/2+\wp,3*\htab+\hp);
\draw [lightgray, ->] (\wtab-\wp,4*\htab-\hp) -- (\wtab/2+\wp,3*\htab+\hp);
\draw [lightgray, ->] (\wtab+\wp,4*\htab-\hp) -- (\wtab/2*3-\wp,3*\htab+\hp);
\draw [lightgray, ->] (2*\wtab-\wp,4*\htab-\hp) -- (\wtab/2*3+\wp,3*\htab+\hp);
\draw [lightgray, ->] (2*\wtab+\wp,4*\htab-\hp) -- (\wtab/2*5-\wp,3*\htab+\hp);
\draw [lightgray, ->] (-\wtab/2*5+\wp,3*\htab-\hp) -- (-\wtab*2-\wp,2*\htab+\hp);
\draw [black, ->] (-\wtab*2.3+\wp,3*\htab-\hp) -- (-1.2*\wtab-\wp,2*\htab+\hp);
\node at (-\wtab*1.7+\wp,2.7*\htab-\hp) {\scriptsize $\pi_1$};
\draw [lightgray, ->] (-\wtab/2-\wp,3*\htab-\hp) -- (-\wtab*2+\wp,2*\htab+\hp);
\draw [lightgray, ->] (-\wtab/2+\wp,3*\htab-\hp) -- (-\wp,2*\htab+\hp);
\draw [lightgray, ->] (\wtab/2+\wp,3*\htab-\hp) -- (\wtab-\wp,2*\htab+\hp);
\draw [lightgray, ->] (\wtab/2*3+\wp,3*\htab-\hp) -- (\wtab*2-\wp,2*\htab+\hp);
\draw [lightgray, ->] (\wtab/2*5-\wp,3*\htab-\hp) -- (\wp,2*\htab+\hp);
\draw [lightgray, ->] (\wtab/2*5,3*\htab-\hp) -- (\wtab*2+\wp,2*\htab+\hp);
\draw [black, ->] (-\wtab/2*3+\wp,3*\htab-\hp) -- (-\wtab-\wp,2*\htab+\hp);
\node at (-\wtab*1.2+\wp,2.7*\htab-\hp) {\scriptsize $\pi_3$};
\node at (-1.2*\wtab-\wp,3.25*\htab - \hp) {} edge [out=40,in=320, loop] ();
\node at (-0.75*\wtab-\wp,3.55*\htab - \hp) {\scriptsize $\pi_1$};
\node at (-2.2*\wtab-\wp,3.25*\htab - \hp) {} edge [out=40,in=320, loop] ();
\node at (-1.8*\wtab-\wp,3.55*\htab - \hp) {\scriptsize $\pi_3$};
\draw [lightgray, ->] (-\wtab*2+\wp,2*\htab-\hp) -- (-\wtab-\wp, \htab+\hp);
\draw [lightgray, ->] (-\wp,2*\htab-\hp) -- (-\wtab+\wp, \htab+\hp);
\draw [lightgray, ->] (\wtab-\wp,2*\htab-\hp) -- (\wp, \htab+\hp);
\draw [lightgray, ->] (\wtab,2*\htab-\hp) -- (\wtab, \htab+\hp);
\draw [lightgray, ->] (2*\wtab-\wp,2*\htab-\hp) -- (\wtab+\wp, \htab+\hp);
\draw [lightgray, ->] (-\wtab+\wp,2*\htab-\hp) -- (-\wp, \htab+\hp);
\node at (-0.7*\wtab-\wp,2.25*\htab - \hp) {} edge [out=40,in=320, loop] ();
\node at (-0.19*\wtab-\wp,2.55*\htab - \hp) {\scriptsize $\pi_1, \pi_3$};
\draw [lightgray, ->] (-\wtab+\wp,\htab-\hp) -- (-\wp, \hp);
\draw [lightgray, ->] (0,\htab-\hp) -- (0, \hp);
\node at (\wtab*0.05+\wp,0.7*\htab-\hp) {\scriptsize \color{lightgray} $\pi_1$};
\draw [lightgray, ->] (\wtab-\wp,\htab-\hp) -- (\wp, \hp);
\node at (0.3*\wtab-\wp,1.25*\htab - \hp) {} edge [out=40,in=320, loop] ();
\node at (0.8*\wtab-\wp,1.55*\htab - \hp) {\scriptsize $\pi_3$};
\end{tikzpicture}
\caption{$\sfB(2134,4312)$ and $\sfB(2134,4312)\downarrow^{H_4(0)}_{H_2(0)\otimes H_2(0)}$}
\label{fig: restriction}
\end{figure}

Next, let us deal with a Mackey formula for weak Bruhat interval modules.
Bergeron and Li~\cite[Subsection 3.1 (5)]{09BL}
provide a Mackey formula working on the Grothendieck ring
$\calG = \bigoplus_{n \ge 0} \calG_0(H_n(0))$ of $0$-Hecke algebras.
It says that for any $H_m(0)$-module $M$, $H_n(0)$-module $N$, and $k \in [1,m+n-1]$,
\begin{align*}
&[(M \boxtimes N)\downarrow^{H_{m+n}(0)}_{H_k(0) \otimes H_{m+n-k}}] \\
&=
\sum_{\substack{t + s = k \\ t \le m,~ s \le n }} 
\left[
\mathtt{T}_{t,s}\left(
M\downarrow^{H_m(0)}_{H_t(0) \otimes H_{m-t}(0)} \otimes \; N\downarrow^{H_n(0)}_{H_s(0) \otimes H_{n-s}(0)}
\right)
\uparrow^{H_k(0) \otimes H_{m+n-k}}_{H_t(0) \otimes H_s(0) \otimes H_{m-t}(0) \otimes H_{n-s}(0)}\right],
\end{align*}
where 
\[
\scalebox{0.94}{$
\mathtt{T}_{t,s}: \module (H_t(0) \otimes H_{m-t}(0) \otimes H_s(0) \otimes H_{n-s}(0))
\ra \module (H_t(0) \otimes H_s(0) \otimes H_{m-t}(0) \otimes H_{n-s}(0))$}
\]
is the functor sending $M_1 \otimes M_2 \otimes N_1 \otimes N_2 \mapsto M_1 \otimes N_1 \otimes M_2 \otimes N_2$.
On the other hand, by combining Lemma~\ref{lem: tensor product} with Theorem~\ref{thm: restriction}, we can derive a formula working weak Bruhat interval modules:
\begin{align}\label{eq: Mackey formula WBIM}
\begin{aligned}
& (\sfB(\sigma,\rho) \boxtimes \sfB(\sigma',\rho')) \downarrow_{H_k(0) \otimes H_{m+n - k}(0)}^{H_{m+n}(0)} \\
& \cong
\bigoplus_{J \in \scrS_{\sigma \conc \sigma', \rho \ostar \rho'}^{(k)}} 
\sfB\big(((\sigma \conc \sigma')_{\scalebox{0.55}{$J$}})_{\le k}, ((\rho \ostar \rho')^{\scalebox{0.55}{$J$}})_{\le k}\big) \otimes
\sfB\big(((\sigma \conc \sigma')_{\scalebox{0.55}{$J$}})_{> k}, ((\rho \ostar \rho')^{\scalebox{0.55}{$J$}})_{> k}\big).
\end{aligned}
\end{align}
Although it is very naive, one can expect 
that Bergeron and Li's Mackey formula lifts to our formula
at least for weak Bruhat interval modules.

To prove our result, we need the notion of standardization.
For $\sigma \in \SG_n$ and $1\le k' \le k \le n$, let $\tst(\sigma;[k',k])$ be a unique permutation in $\SG_{k-k'+1}$ 
satisfying the condition that $\tst(\sigma;[k',k])(i) < \tst(\sigma;[k',k])(j)$ if and only if $\sigma(k'+i-1) < \sigma(k'+j-1)$ for all $1\le i,j \le k-k'+1$.
For instance, if $\sigma = 25143 \in \SG_5$, then $\tst(\sigma;[2,4]) = 312 \in \SG_3$.
This standardization preserves the left weak Bruhat order on $\SG_n$, in other words,  
\begin{equation}\label{lem: preceq preserve}
\tst(\sigma;[k',k]) \preceq_{L} \tst(\rho; [k',k]) \text{ whenever }\sigma \preceq_L \rho.
\end{equation}
The following theorem shows that~\eqref{eq: Mackey formula WBIM} is a natural lift of Bergeron and Li's Mackey formula.

\begin{theorem}\label{thm: Mackey formula}
For $\sigma, \rho \in \SG_m$, $\sigma',\rho' \in \SG_n$, and $1 \le k < m+n$,
\begin{align*}
&(\sfB(\sigma,\rho) \boxtimes \sfB(\sigma',\rho')) \downarrow_{H_k(0) \otimes H_{m+n - k}(0)}^{H_{m+n}(0)}\\
&\cong \hspace{-1.1ex}
\bigoplus_{\substack{t + s = k \\ t \le m,~ s \le n }}
\hspace{-1.1ex}
\mathtt{T}_{t,s}\left(
\sfB(\sigma,\rho)\downarrow^{H_m(0)}_{H_t(0) \otimes H_{m-t}(0)} \otimes \;
\sfB(\sigma',\rho')\downarrow^{H_n(0)}_{H_s(0) \otimes H_{n-s}(0)}
\right)
\uparrow^{H_k(0) \otimes H_{m+n-k}}_{H_t(0) \otimes H_s(0) \otimes H_{m-t}(0) \otimes H_{n-s}(0)}.
\end{align*}
\end{theorem}

\begin{proof}
Using Lemma~\ref{lem: tensor product} and Theorem~\ref{thm: restriction}, we derive that 
\begin{align*}
&
\bigoplus_{\substack{t + s = k \\ t \le m,~ s \le n }}
\hspace{-1.1ex}
\mathtt{T}_{t,s}\left(
\sfB(\sigma,\rho)\downarrow^{H_m(0)}_{H_t(0) \otimes H_{m-t}(0)} \otimes \;
\sfB(\sigma',\rho')\downarrow^{H_n(0)}_{H_s(0) \otimes H_{n-s}(0)}
\right)
\uparrow^{H_k(0) \otimes H_{m+n-k}}_{H_t(0) \otimes H_s(0) \otimes H_{m-t}(0) \otimes H_{n-s}(0)}\\
&\cong
\bigoplus_{J_1,\, J_2}
\sfB\left(
(\sigma_{\sJo})_{\le t} \conc (\sigma'_{\sJt})_{\le s},
(\rho^{\sJo})_{\le t} \ostar ({\rho'}^{\sJt})_{\le s}
\right) 
\otimes
\sfB\left(
(\sigma_{\sJo})_{> t} \conc (\sigma'_{\sJt})_{> s},
(\rho^{\sJo})_{> t} \ostar ({\rho'}^{\sJt})_{> s}
\right),
\end{align*}
where the sum ranges over all pairs $(J_1,J_2)$ in
$$
\bigcup_{\substack{t + s = k \\ t \le m,~ s \le n }} \scrS_{\sigma,\rho}^{(t)} \times \scrS_{\sigma',\rho'}^{(s)}.
$$
Let $f: \scrS_{\sigma \conc \sigma', \rho \ostar \rho'}^{(k)} \ra \bigcup_{\substack{t + s = k \\ t \le m,~ s \le n }} \scrS_{\sigma,\rho}^{(t)} \times \scrS_{\sigma',\rho'}^{(s)}$ be a map defined by
\[
J \mapsto \left(J \cap [1,m],\; \{i \mid i+m\in J \cap [m+1,m+n]\}\right)
\]
for all $J \in \scrS_{\sigma \conc \sigma', \rho \ostar \rho'}^{(k)}$.
To begin with, let us verify that $f$ is a well-defined bijection.

First, we prove that $f$ is well-defined.
Given $J \in \scrS_{\sigma \conc \sigma', \rho \ostar \rho'}^{(k)}$, 
let 
\begin{align}\label{eq: J1, J2}
J_1 = J \cap [1,m],
\ 
J_2 = \{i \mid i + m \in J \cap [m+1,m+n]\},
\ 
t = |J_1|, 
\  \text{and} \ 
s = |J_2|.
\end{align}
Since $J \in \scrS_{\sigma \conc \sigma', \rho \ostar \rho'}^{(k)}$,
there exists a permutation $\delta \in [\sigma \conc \sigma', \rho \ostar \rho']_L$ such that $\delta(J) = [1,k]$.
Combining~\eqref{lem: preceq preserve} with the definition of $\sigma \conc \sigma'$ and $\rho \ostar \rho'$ yields that 
\[
\sigma \preceq_L \tst(\delta; [1,m]) \preceq_L \rho
\quad \text{and} \quad 
\sigma' \preceq_L \tst(\delta; [m + 1, m + n]) \preceq_L \rho'.
\]
Note that
\[
\tst(\delta;[1,m]) (J_1) = [1,t] 
\quad \text{and} \quad
\tst(\delta;[m+1,m+n]) (J_2) = [1,s].
\]
This tells us that $J_1 \in \scrS_{\sigma,\rho}^{(t)}$ and $ J_2 \in \scrS_{\sigma',\rho'}^{(s)}$, thus $f$ is well-defined.

Next, we prove that $f$ is bijective by constructing its inverse.
Let $t'$ and $s'$ be nonnegative integers satisfying $t' + s' = k$, $t' \le m$, and $s' \le n$.
Given $K_1 \in \scrS_{\sigma,\rho}^{(t')}$ and $K_2 \in \scrS_{\sigma',\rho'}^{(s')}$, 
consider the mapping
$(K_1, K_2) \mapsto K := K_1 \cup \{i \mid i-m \in K_2\}.$
Note that there exist permutations $\gamma_1 \in [\sigma,\rho]_L$ and $\gamma_2 \in [\sigma', \rho']_L$ such that
\[
\gamma_1(K_1) = [1,t'] 
\quad \text{and} \quad 
\gamma_2(K_2) = [1,s']. 
\]
Let $\delta \in \SG_{m+n}$ given by
\[
\delta(i) = \begin{cases}
\gamma_1(i) & \text{if $i \in K_1$},\\
\gamma_1(i) + t' + s' & \text{if $i \in [1,m] \setminus K_1$},\\
\gamma_2(i-m) + t' & \text{if $i \in K'_2$},\\
\gamma_2(i-m) + m + s' & \text{if $i \in [m+1, m+n] \setminus K'_2$},
\end{cases}
\]
where $K'_2 := \{j \mid j-m \in K_2\}$.
Then $\delta \in \gamma_1 \tshuffle \gamma_2$ and $\delta(K) = [1,k]$, 
and therefore $K \in \scrS_{\sigma \conc \sigma', \rho \ostar \rho'}^{(k)}$.
In addition, $f(K) = (K_1, K_2)$ by the definition of $K$.
On the other hand, given $J \in \scrS_{\sigma \conc \sigma', \rho \ostar \rho'}^{(k)}$, letting $(K_1,K_2) = f(J)$, one can easily see that $K_1 \cup \{i \mid i-m \in K_2\} = J$.
So the inverse of $f$ is well-defined and thus $f$ is bijective.

For our assertion, it suffices to show that for each $J \in \scrS_{\sigma \conc \sigma', \rho \ostar \rho'}^{(k)}$, 
\begin{align*}
&\sfB\big(((\sigma  \conc  \sigma')_{\scalebox{0.55}{$J$}})_{\le k}, ((\rho \ostar \rho')^{\scalebox{0.55}{$J$}})_{\le k}\big) \otimes
\sfB\big(((\sigma  \conc  \sigma')_{\scalebox{0.55}{$J$}})_{> k}, ((\rho \ostar \rho')^{\scalebox{0.55}{$J$}})_{> k}\big)\\
&\cong
\sfB\left(
(\sigma_{\sJo})_{\le t}  \conc  (\sigma'_{\sJt})_{\le s},
(\rho^{\sJo})_{\le t} \ostar ({\rho'}^{\sJt})_{\le s}
\right) 
\otimes
\sfB\left(
(\sigma_{\sJo})_{> t}  \conc  (\sigma'_{\sJt})_{> s},
(\rho^{\sJo})_{> t} \ostar ({\rho'}^{\sJt})_{> s}
\right),
\end{align*}
where $J_1, J_2, t$, and $s$ are defined as in~\eqref{eq: J1, J2}.
This isomorphism immediately follows from the four equalities:
\begin{align*}
&(1)~ ((\sigma  \conc  \sigma')_{\scalebox{0.55}{$J$}})_{\le k} = (\sigma_{\sJo})_{\le t}  \conc  (\sigma'_{\sJt})_{\le s}, \quad
(2)~((\rho \ostar \rho')^{\scalebox{0.55}{$J$}})_{\le k} = (\rho^{\sJo})_{\le t} \ostar ({\rho'}^{\sJt})_{\le s}, \\
&(3)~ ((\sigma  \conc  \sigma')_{\scalebox{0.55}{$J$}})_{> k} = (\sigma_{\sJo})_{> t}  \conc  (\sigma'_{\sJt})_{> s}, \quad
(4)~((\rho \ostar \rho')^{\scalebox{0.55}{$J$}})_{> k} = (\rho^{\sJo})_{> t} \ostar ({\rho'}^{\sJt})_{> s}.
\end{align*}
Let us prove the equality (1).
Let $J = \{j_1< j_2 < \cdots < j_{k}\} \in \scrS_{\sigma \conc \sigma', \rho \ostar \rho'}^{(k)}$.
Then,
\[
((\sigma  \conc  \sigma')_{\scalebox{0.55}{$J$}})_{\le k} = (\sigma  \conc  \sigma')_{\scalebox{0.55}{$J$}}(j_i) 
\quad \text{for $1 \le i \le k$}.
\]
Assume that $j_t \le m$ and $j_{t+1} > m$.
Set
\[
J_1 = \{j_1 < j_2 < \cdots < j_t\} \quad \text{and} \quad
J_2 = \{j_{t+1}-m < j_{t+2}-m < \cdots < j_{t+s}-m\}.
\]
Since
\begin{align*}
(\sigma_{\sJo})_{\le t}(i) &= \sigma_{\sJo}(j_i) \quad \text{for $1 \le i \le t$}, \\
(\sigma'_{\sJt})_{\le s}(i) &= \sigma'_{\sJt}(j_{t+i} - m) \quad \text{for $1 \le i \le s$},
\end{align*}
it follows that
\begin{align*}
(\sigma_{\sJo})_{\le t}  \conc  (\sigma'_{\sJt})_{\le s} (i) = \begin{cases}
\sigma_{\sJo}(j_i) & \text{if $1 \le i \le t$,} \\
\sigma'_{\sJt}(j_{i} - m) + t & \text{if $t+1 \le i \le t+s$}.
\end{cases}
\end{align*}
In case where $i \in [1,t]$, we have
\begin{align*}
(\sigma  \conc  \sigma')_{\scalebox{0.55}{$J$}}(j_i) 
& = \rmperm_{(\sigma  \conc  \sigma') \cdot J} (\sigma  \conc  \sigma') (j_i)
= \rmperm_{\sigma \cdot J_1} \sigma(j_i)
= \sigma_{\sJo}(j_i).
\end{align*}
In case where $i \in [t+1, t+s]$, we have
\begin{align*}
(\sigma  \conc  \sigma')_{\scalebox{0.55}{$J$}}(j_i) 
& = \rmperm_{(\sigma  \conc  \sigma') \cdot J} (\sigma  \conc  \sigma') (j_i)
= \rmperm_{\sigma' \cdot J_2} \sigma'(j_i - m) + t
= \sigma'_{\sJt}(j_i - m) + t.
\end{align*}
Thus, the equality (1) holds.

Next, let us prove the equality (2).
Under the same setting with the above paragraph, we have
\begin{align*}
((\rho \ostar \rho')^{\sJ})_{\le k}(i)
& = (\rho \ostar \rho')^{\sJ}(j_i) \quad \text{for $1 \le i \le k$}, \\
(\rho^{\sJo})_{\le t}(i) 
& = \rho^{\sJo} (j_i) \quad \text{for $1 \le i \le t$}, \\
({\rho'}^{\sJt})_{\le s}(i) 
& = \rho'^{\sJt} (j_{t+i} - m) \quad \text{for $1 \le i \le s$}. 
\end{align*}
From the second and third equalities, we have
\[
(\rho^{\sJo})_{\le t} \ostar ({\rho'}^{\sJt})_{\le s}
= \begin{cases}
\rho^{\sJo}(j_i) + s \quad \text{for $1\le i \le t$}, \\
\rho^{\sJt}(j_i - m)  \quad \text{for $t + 1\le i \le t + s$}.
\end{cases}
\]
In case where $i \in [1,t]$, we have
\begin{align*}
(\rho \ostar \rho')^{\sJ}(j_i) 
& = \rmperm^{w_0^{(m+n)}(\rho \ostar \rho') \cdot J} w_0^{(m+n)}(\rho \ostar \rho')(j_i) 
= \rmperm^{w_0^{(m)}(\rho) \cdot J_1} w_0^{(m)}\rho(j_i) + s \\
& = \rho^{\sJo}(j_i) + s.
\end{align*}
In case where $i \in [t+1,t+s]$, we have
\begin{align*}
(\rho \ostar \rho')^{\sJ}(j_i) 
& = \rmperm^{w_0^{(m+n)}(\rho \ostar \rho') \cdot J} w_0^{(m+n)}(\rho \ostar \rho')(j_i) 
= \rmperm^{w_0^{(n)}(\rho') \cdot J_2} w_0^{(n)}\rho'(j_i- m) \\
& = \rho'^{\sJt}(j_i - m).
\end{align*}
Thus, the equality (2) holds.

Equalities (3) and (4) can be proven in a similar way with (1) and (2) respectively, so we omit the proofs.
\end{proof}

\begin{remark}
For $\sigma \in \SG_m$ and $\rho \in \SG_n$, let $\sigma \shuffle \rho$ be the set of permutations $\gamma \in \SG_{m+n}$ satisfying that $\sigma(1)\sigma(2)\cdots\sigma(m)$ and $(\rho(1)+m) (\rho(2)+m) \cdots (\rho(n)+m)$ are subwords of $\gamma(1) \gamma(2) \cdots \gamma(m+n)$ and $\sigma \tshuffle \rho := \{\gamma^{-1} \mid  \gamma \in \sigma^{-1} \, \shuffle \, \rho^{-1}\}$.
For $X \subseteq \SG_m$ and $Y \subseteq \SG_n$, let
\[
X \tshuffle Y := \bigcup_{\sigma \in X, \, \rho \in Y} \sigma \tshuffle \rho.
\]
In the proof of Theorem~\ref{thm: Mackey formula}, we employ the fact that
$[\sigma \conc \sigma', \rho \ostar \rho']_L \subseteq [\sigma,\rho]_L \tshuffle [\sigma',\rho']_L$
for $\sigma, \rho \in \SG_m$ and $\sigma', \rho' \in \SG_n$.
In fact,
\[
[\sigma \conc \sigma', \rho \ostar \rho']_L = [\sigma,\rho]_L \tshuffle [\sigma',\rho']_L.
\]
Therefore, by Lemma~\ref{lem: tensor product}, we have that
\begin{align*}
\sfB(\sigma,\rho) \boxtimes \sfB(\sigma',\rho') 
\cong
\sfB([\sigma,\rho]_L \tshuffle [\sigma',\rho']_L).
\end{align*}

It is well known that the multiplicative rule for the fundamental quasisymmetric functions are described as follow:
\begin{align}\label{eq: product of F}
F_\alpha F_\beta = \sum_{\gamma \in \sigma \shuffle \rho} F_{\comp(\Des_R(\gamma))}.
\end{align}
Here, $\sigma \in \SG_{\ell(\alpha)}$ and $\rho \in \SG_{\ell(\beta)}$ satisfying $\Des_R(\sigma) = \set(\alpha)$ and $\Des_R(\rho) = \set(\beta)$.
Now, one can see that the multiplicative rule~\eqref{eq: product of F} lifts to the induction product
\begin{align*}
\sfB(\sigma,\sigma) \boxtimes \sfB(\rho, \rho) 
\cong
\sfB(\sigma \tshuffle \rho).
\end{align*}
\end{remark}

\subsection{(Anti-)automorphism twists of weak Bruhat interval modules}\label{subsec: automorphism twists}
Let $\mu: B \to A$ be an isomorphism of associative algebras over $\mathbb{C}$. Given an $A$-module $M$, we define $\mu[M]$ by the $B$-module with the same underlying space as $M$ and with the action $\cdot_{\mu}$ twisted by $\mu$ in such a way that
\begin{align*}
b \cdot_{\mu} v := \mu(b) \cdot v \quad \text{for $a \in A$ and $v \in M$}.
\end{align*}
Let $\module A$ be the category of finite dimensional left $A$-modules. 
Any isomorphism $\mu: B \to A$ induces  
a covariant functor 
\begin{align}\label{eq: isomorphism twist}
\bfT^+_{\mu}: \module A \ra \module B, \quad M \mapsto \mu[M],
\end{align}
where $\bfT^+_{\mu}(h):\mu[M] \to \mu[N], m \mapsto h(m)$ for every $A$-module homomorphism $h: M\to N$.
We call $\bfT^+_{\mu}$ the \emph{$\mu$-twist}.

Similarly, given an anti-isomorphism $\nu: B \to A$,
we define $\nu[M]$ to be the $B$-module with $M^*$, the dual space of $M$, as the underlying space and with the action $\cdot^{\nu}$ defined by
\begin{align}\label{eq: anti-automorphism twist}
(b \cdot^{\nu} \delta) (v) := \delta(\nu (b) \cdot v) \quad \text{for $b \in B$, $\delta \in M^*$, and $v \in M$}.
\end{align}
Any anti-isomorphism $\nu: B \to A$ induces a contravariant functor
\[
\bfT^-_{\nu}: \module A \ra \module B, \quad M \mapsto \nu[M], 
\]
where $\bfT^-_{\nu}(h):\nu[N] \to \nu[M], \delta \mapsto \delta \circ h$
for every $A$-module homomorphism $h: M\to N$.
We call $\bfT^-_{\nu}$ the \emph{$\nu$-twist}.
In~\cite{05Fayers}, Fayers introduced the involutions $\autophi, \autotheta$ and the anti-involution $\autochi$ of $H_n(0)$ defined in the following manner:
\begin{align*}
&\autophi: H_n(0) \ra H_n(0), \quad \pi_i \mapsto \pi_{n-i} \quad \text{for $1 \le i \le n-1$},\\
&\autotheta: H_n(0) \ra H_n(0), \quad \pi_i \mapsto - \opi_i \quad \text{for $1 \le i \le n-1$}, \\
&\autochi: H_n(0) \ra H_n(0), \quad \pi_i \mapsto \pi_i \quad \text{for $1 \le i \le n-1$}.
\end{align*}
These morphisms commute with each other.
We study the (anti-)involution twists for $\autophi$, $\autotheta$, $\autochi$, and their compositions $\autoomega:= \autophi \circ \autotheta$, $\hautophi:= \autophi \circ \autochi$, $\hautotheta := \autotheta \circ \autochi$, $\hautoomega:= \autoomega \circ \autochi$.

The viewpoint of looking at (anti-)involutions as functors is quite useful for many reasons.
The primary reason is that using the exactness of the corresponding functors, 
one can transport various structures of a given $H_n(0)$-module to their twists in a functorial way. 
An application in this direction can be found in Subsection~\ref{Involution twists; application}. 
Additional reasons include that some well known functors appear in the context of our (anti-)involution twists.
Given any anti-automorphism $\nu$ of $H_n(0)$, the \emph{standard duality $D: \module H_n(0) \to \module H_n(0)^{\mathrm{op}}$} appears as $F_{\nu^{-1}} \circ \bfT^-_{\nu}$, where $F_{\nu^{-1}}:\module H_n(0) \to \module H_n(0)^{\mathrm{op}}$ is the functor induced by
the inverse of $\mathring{\nu}: H_n(0) \to  H_n(0)^{\mathrm{op}}, x \mapsto \nu(x)$.
In particular, $D \cong F_{\chi} \circ \bfT_\chi^-$.
The \emph{Nakayama functor $\upnu$} is naturally isomorphic to $\bfT^+_{\autophi}$, which can be derived by combining~\cite[Proposition 4.2]{05Fayers} with~\cite[Proposition IV.3.13]{11SY}. 
To explain in more detail, the former reference implies $\autophi$ is a Nakayama automorphism and 
the latter reference shows the relationship between Nakayama automorphisms and $\upnu$.
And, the \emph{$H_n(0)$-dual functor}, ${\Hom}_{H_n(0)}(-, H_n(0))$, is naturally isomorphic to $D\circ \upnu$, 
and therefore is naturally isomorphic to $F_{\chi} \circ \bfT_{\hautophi}^-$. 

Now, let us focus on the main topic of this subsection, (anti-)involution twists of weak Bruhat interval modules.
For irreducible modules and projective indecomposable modules, it was shown in~\cite{05Fayers,16Huang} that
\begin{align*}
&\autophi[\bfF_\alpha] \cong \bfF_{\alpha^\rmr}, \quad
\autotheta[\bfF_\alpha] \cong \bfF_{\alpha^\rmc}, \quad
\autochi[\bfF_\alpha] \cong \bfF_\alpha, \\
&\autophi[\calP_\alpha] \cong \calP_{\alpha^\rmr}, \quad 
\autotheta[\calP_\alpha] \cong \calP_{\alpha^\rmc}.
\end{align*}
The following theorem shows how the (anti-)involution twists act on weak Bruhat interval modules.
\begin{theorem} \label{thm: auto twist part 1}
For $\sigma, \rho \in \SG_n$, we have the following isomorphisms of $H_n(0)$-modules.
\begin{enumerate}[label = {\rm (\arabic*)}]
\item $\autophi[\sfB(\sigma,\rho)] \cong \sfB(\sigma^{w_0},\rho^{w_0})$.
\item $\autotheta[\sfB(\sigma,\rho)] \cong \osfB(\sigma,\rho)$.
\item $\autochi[\sfB(\sigma,\rho)] \cong \osfB(\rho w_0, \sigma w_0)$.
In particular, $\autochi[\calP_{\alpha}] \cong \calP_{\alpha^\rmr}$.
\end{enumerate}
\end{theorem}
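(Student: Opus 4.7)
The plan is to prove each part by exhibiting an explicit $\C$-linear isomorphism between the two $H_n(0)$-modules on their natural bases and verifying the intertwining property via the three cases of~\eqref{Hecke algebra action}. All three constructions rest on a bijection between the two indexing weak Bruhat intervals together with a descent-set identity that converts the case analysis on one side into that on the other.

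For (1), set $f \colon \sfB(\sigma^{w_0},\rho^{w_0}) \to \autophi[\sfB(\sigma,\rho)]$ by $\eta \mapsto \eta^{w_0}$. Conjugation by $w_0$ is a length-preserving involution that preserves $\preceq_L$, so it restricts to a bijection $[\sigma^{w_0},\rho^{w_0}]_L \to [\sigma,\rho]_L$. The verification then comes down to the descent identity $\Des_L(\eta^{w_0}) = \{n-i : i \in \Des_L(\eta)\}$ (immediate from $\eta^{w_0}(j) = n+1-\eta(n+1-j)$) and the interchange $s_{n-i}\eta^{w_0} = (s_i\eta)^{w_0}$ (from $s_{n-i} = w_0 s_i w_0$); each of the three cases of $\pi_{n-i}\cdot\eta^{w_0}$ then lines up with the corresponding case of $\pi_i\cdot\eta$ in $\sfB(\sigma^{w_0},\rho^{w_0})$. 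For (2), the $\autotheta$-twisted $\pi_i$-action on $\sfB(\sigma,\rho)$ sends $\gamma \mapsto (1-\pi_i)\gamma$, whereas the $\pi_i$-action on $\osfB(\sigma,\rho)$ (rewritten via $\pi_i = 1+\opi_i$) sends $\gamma \mapsto (1+\opi_i)\gamma$; these agree in the first two cases of~\eqref{Hecke algebra action} but differ by a single sign in the case $i\notin\Des_L(\gamma)$, $s_i\gamma\in[\sigma,\rho]_L$, and this sign is absorbed by the twist $f \colon \osfB(\sigma,\rho) \to \autotheta[\sfB(\sigma,\rho)]$, $\gamma \mapsto (-1)^{\ell(\gamma)}\gamma$, since $\ell(s_i\gamma) = \ell(\gamma)+1$ there.

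For (3), since $\autochi$ is an anti-involution, $\autochi[\sfB(\sigma,\rho)]$ is the dual space $\sfB(\sigma,\rho)^*$ with dual basis $\{\gamma^* : \gamma \in [\sigma,\rho]_L\}$. Evaluating $(\pi_i \cdot^{\autochi} \gamma^*)(\eta) = \gamma^*(\pi_i\eta)$ via~\eqref{Hecke algebra action} shows that $\pi_i \cdot^{\autochi} \gamma^*$ equals $0$ when $i\notin\Des_L(\gamma)$, equals $\gamma^*$ when $i\in\Des_L(\gamma)$ and $s_i\gamma\notin[\sigma,\rho]_L$, and equals $\gamma^*+(s_i\gamma)^*$ when $i\in\Des_L(\gamma)$ and $s_i\gamma\in[\sigma,\rho]_L$. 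Meanwhile, right-multiplication by $w_0$ is a bijection $[\sigma,\rho]_L \to [\rho w_0,\sigma w_0]_L$ satisfying $\Des_L(\gamma w_0) = [n-1]\setminus\Des_L(\gamma)$; rewriting the $\pi_i$-action on $\osfB(\rho w_0,\sigma w_0)$ via $\pi_i = 1+\opi_i$ produces precisely the same three cases once $\Des_L$ is swapped with its complement, so $f(\gamma w_0) := \gamma^*$ gives the desired $H_n(0)$-module isomorphism. For the $\calP_\alpha$ assertion, combine this with $\calP_\alpha \cong \sfB(w_0(\alpha^\rmc), w_0 w_0(\alpha))$ and the identity $w_0(\beta^\rmr) = w_0 w_0(\beta) w_0$ to obtain $\autochi[\calP_\alpha] \cong \osfB(w_0(\alpha^\rmr), w_0(\alpha^\rmc) w_0)$; applying (2) and recognizing $\sfB(w_0(\alpha^\rmr), w_0(\alpha^\rmc) w_0)$ as $\calP_{(\alpha^\rmc)^\rmr}$ reduces the claim to the known isomorphism $\autotheta[\calP_\gamma] \cong \calP_{\gamma^\rmc}$ recalled earlier. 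The main bookkeeping obstacle lies in (3), where aligning the two triples of case splittings requires keeping careful track of the swap $\Des_L \leftrightarrow [n-1]\setminus\Des_L$ produced by $\gamma \mapsto \gamma w_0$; parts (1) and (2) are comparatively routine.
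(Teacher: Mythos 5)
Your proposal is correct and follows essentially the same route as the paper: the paper exhibits the same three explicit $\C$-linear bijections (it writes $f_1:\gamma\mapsto\gamma^{w_0}$, $f_2:\gamma\mapsto(-1)^{\ell(\gamma\sigma^{-1})}\gamma$, $f_3:\gamma^*\mapsto\gamma w_0$, i.e.\ the inverses of your maps; your sign $(-1)^{\ell(\gamma)}$ in part (2) differs from the paper's by the global constant $(-1)^{\ell(\sigma)}$ and is equally valid), and it verifies the intertwining property only for (3), as you do with slightly more detail across all three. Your derivation of $\autochi[\calP_\alpha]\cong\calP_{\alpha^\rmr}$ via (2) and $\autotheta[\calP_\gamma]\cong\calP_{\gamma^\rmc}$ is a correct and explicit unpacking of the paper's terser ``combine $w_0(\alpha)^{w_0}=w_0(\alpha^\rmr)$ with (3)''.
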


\begin{proof}
Consider the $\mathbb C$-linear isomorphisms defined by  
\begin{align*}
&f_1: \autophi[\sfB(\sigma, \rho)] \ra \sfB(\sigma^{w_0}, \rho^{w_0}), \quad \gamma \mapsto\gamma^{w_0},\\
&f_2: \autotheta[\sfB(\sigma, \rho)] \ra \osfB(\sigma, \rho),  \quad
\gamma \mapsto (-1)^{\ell(\gamma \sigma^{-1})} \gamma,\\
&f_3:\autochi[\sfB(\sigma, \rho)] \ra \osfB(\rho w_0, \sigma w_0), \quad \gamma^\ast \mapsto \gamma w_0,
\end{align*}
where $\gamma \in [\sigma, \rho]_L$ and $\gamma^\ast$ denotes the dual of $\gamma$ with respect to the basis $[\sigma, \rho]_L$ for $\sfB(\sigma, \rho)$.
Since it can be proven in a similar manner that these maps are $H_n(0)$-isomorphisms, 
we here only deal with (3). 

Note that, for $1 \leq i \le n-1$,
\[
\opi_i \cdot^\autochi \gamma^* 
= \begin{cases}
-\gamma^* & \text{if $i \notin \Des_L(\gamma)$,} \\
(s_i\gamma)^* & \text{if $i \in \Des_L(\gamma)$ and $s_i\gamma \in [\sigma, \rho]_L$,} \\
0 & \text{if $i \in \Des_L(\gamma)$ and $s_i\gamma \notin [\sigma, \rho]_L$},
\end{cases}
\]
which yields that 
\[
f_3(\opi_i \cdot^\autochi \gamma^*) 
= \begin{cases}
-\gamma w_0 & \text{if $i \notin \Des_L(\gamma)$,} \\
s_i \gamma w_0 & \text{if $i \in \Des_L(\gamma)$ and $s_i\gamma \in [\sigma, \rho]_L$,} \\
0 & \text{if $i \in \Des_L(\gamma)$ and $s_i\gamma \notin [\sigma, \rho]_L$.}
\end{cases}
\]
On the other hand,
\begin{align*}
\opi_i \star f_3(\gamma^*) 
&= \opi_i \star \gamma w_0 \\
&= \begin{cases}
-\gamma w_0 & \text{if $i \in \Des_L(\gamma w_0)$,} \\
 s_i \gamma w_0 & \text{if $i \notin \Des_L(\gamma w_0)$ and $s_i \gamma w_0 \in [\rho w_0, \sigma w_0]_L$,} \\
0 & \text{if $i \notin \Des_L(\gamma w_0)$ and $s_i \gamma w_0 \notin [\rho w_0, \sigma w_0]_L$.}
\end{cases}
\end{align*}
It immediately follows from~\eqref{eq: Des alternating def} that $i \notin \Des_L(\gamma)$ if and only if $i \in \Des_L(\gamma w_0)$.
Moreover, it is trivial that $s_i\gamma \in [\sigma, \rho]_L$ if and only if $s_i \gamma w_0 \in [\rho w_0, \sigma w_0]_L$.
Thus, we verified $\autochi[\sfB(\sigma,\rho)] \cong \osfB(\rho w_0, \sigma w_0)$.
And, combining the equality $w_0(\alpha)^{w_0} = w_0(\alpha^\rmr)$ with (3) yields $\autochi[\calP_{\alpha}] \cong \calP_{\alpha^\rmr}$.
\end{proof}

\begin{table}[t]
\fontsize{9}{9}
\renewcommand*\arraystretch{1.2}
\begin{tabular}{ c || c | c | c | c | c | c | c  } 
& $\autophi$-twist
& $\autotheta$-twist
& $\autoomega$-twist
& $\autochi$-twist
& $\hautophi$-twist
& $\hautotheta$-twist
& $\hautoomega$-twist
\\ \hline \hline
$\sfB(\sigma, \rho)$
& $\sfB(\sigma^{w_0}, \rho^{w_0})$
& $\osfB(\sigma, \rho)$
& $\osfB(\sigma^{w_0}, \rho^{w_0})$
& $\osfB(\rho w_0, \sigma w_0)$
& $\osfB(w_0 \rho, w_0 \sigma)$
& $\sfB(\rho w_0, \sigma w_0)$
& $\sfB(w_0 \rho, w_0 \sigma)$
\\
\hline
$\osfB(\sigma, \rho)$ 
& $\osfB(\sigma^{w_0}, \rho^{w_0})$
& $\sfB(\sigma, \rho)$
& $\sfB(\sigma^{w_0}, \rho^{w_0})$ 
& $\sfB(\rho w_0, \sigma w_0)$
& $\sfB(w_0 \rho, w_0 \sigma)$
& $\osfB(\rho w_0, \sigma w_0)$
& $\osfB(w_0 \rho, w_0 \sigma)$
\end{tabular}
\caption{(Anti-)involution twists of weak Bruhat interval modules}
\label{tab: auto twist of BIM}
\end{table}

\begin{table}[t]
\fontsize{9}{9}
\renewcommand*\arraystretch{1.2}
\begin{tabular}{ c || c | c | c | c | c | c | c  } 
& $\autophi$-twist
& $\autotheta$-twist
& $\autoomega$-twist
& $\autochi$-twist
& $\hautophi$-twist
& $\hautotheta$-twist
& $\hautoomega$-twist
\\ \hline \hline
$\bfF_\alpha$
& $\bfF_{\alpha^\rmr}$
& $\bfF_{\alpha^\rmc}$
& $\bfF_{\alpha^\rmt}$
& $\bfF_{\alpha}     $
& $\bfF_{\alpha^\rmr}$
& $\bfF_{\alpha^\rmc}$
& $\bfF_{\alpha^\rmt}$
\\ \hline
$\calP_\alpha$
& $\calP_{\alpha^\rmr}$
& $\calP_{\alpha^\rmc}$
& $\calP_{\alpha^\rmt}$
& $\calP_{\alpha^\rmr}$
& $\calP_{\alpha}     $
& $\calP_{\alpha^\rmt}$
& $\calP_{\alpha^\rmc}$
\end{tabular}
\caption{(Anti-)involution twists of $\bfF_\alpha$ and $\calP_\alpha$}
\label{tab: auto twist of F and P}
\end{table}

As seen in Table~\ref{tab: auto twist of BIM}, various (anti-)involution twists can be obtained from Theorem~\ref{thm: auto twist part 1} by composing $\autotheta$, $\autophi$, and $\autochi$.
For the reader's understanding, we deal with irreducible modules and projective indecomposable modules in a separate table (see Table~\ref{tab: auto twist of F and P}).

\begin{example}\label{eg: inv twist}
By Theorem~\ref{thm: auto twist part 1}, we have
\begin{align*}
\autophi[\sfB(2134, 4123)] &\cong \sfB(1243, 2341), \\
\autotheta[\sfB(2134, 4123)] &\cong \osfB(2134, 4123), \ \text{and} \\
\autochi[\sfB(2134, 4123)] &\cong \osfB(3214, 4312).
\end{align*}
We illustrate these $H_4(0)$-modules as follows:
\[
\def \hp{0.25}
\def \wp{0.2}
\def \wtab{1.7}
\def \htab{2.1}
\begin{tikzpicture}[baseline = 0mm, scale = 0.7]
\node[below] at (0,-0.3) {$\sfB(2134, 4123)$};
\node at (0, 0.2*\htab) {$0$};
\node at (0, \htab) {$4123$};
\node at (0, 2*\htab) {$3124$};
\node at (0, 3*\htab) {$2134$};
\node at (0.8*\wtab, 2.55*\htab) {$0$};
\node at (0.8*\wtab, 1.55*\htab) {$0$};
\draw [->] (2*\wp, 3*\htab - 1.5*\hp) -- (0.8*\wtab-\wp, 2.45*\htab+\hp);
\node[right] at (0.25*\wtab + 0.5*\wp ,2.78*\htab) {\scriptsize $\pi_3$};
\draw [->] (2*\wp, 2*\htab - 1.5*\hp) -- (0.8*\wtab - \wp, 1.45*\htab + \hp);
\node[right] at (0.25*\wtab + 0.5*\wp ,1.78*\htab ) {\scriptsize $\pi_1$};
\node at (0.25*\wtab, 3.05*\htab)  {} edge [out=35,in=325, loop] ();
\node[right] at (0.65*\wtab, 3.15*\htab) {\scriptsize $\pi_1$};
\node at (0.25*\wtab, 2.05*\htab)  {} edge [out=35,in=325, loop] ();
\node[right] at (0.65*\wtab, 2.15*\htab) {\scriptsize $\pi_2$};
\node at (0.25*\wtab, 1.05*\htab)  {} edge [out=35,in=325, loop] ();
\node[right] at (0.65*\wtab, 1.15*\htab) {\scriptsize $\pi_3$};
\draw [->] (0, 3*\htab - 1.5*\hp) -- (0, 2*\htab+1.5*\hp);
\node[left] at (0.8*\wp, 2.5*\htab) {\scriptsize $\pi_2$};
\draw [->] (0, 2*\htab - 1.5*\hp) -- (0, 1*\htab+1.5*\hp);
\node[left] at (0.8*\wp, 1.5*\htab) {\scriptsize $\pi_3$};
\draw [->] (0, 1*\htab - 1.5*\hp) -- (0, 0.2*\htab+1.5*\hp);
\node[left] at (0.8*\wp, 0.6*\htab) {\scriptsize $\pi_1, \pi_2$};
\end{tikzpicture}
\quad
\begin{tikzpicture}[baseline = 0mm, scale = 0.7]
\node[below] at (0,-0.3) {$\autophi[\sfB(2134, 4123)]$};
\node[below] at (0.1,-1.2) {$(\cong \sfB(1243, 2341))$};
\node at (0, 0.2*\htab) {$0$};
\node at (0, \htab) {$4123$};
\node at (0, 2*\htab) {$3124$};
\node at (0, 3*\htab) {$2134$};
\node at (0.8*\wtab, 2.55*\htab) {$0$};
\node at (0.8*\wtab, 1.55*\htab) {$0$};
\draw [->] (2*\wp, 3*\htab - 1.5*\hp) -- (0.8*\wtab-\wp, 2.45*\htab+\hp);
\node[right] at (0.25*\wtab + 0.5*\wp ,2.78*\htab) {\scriptsize $\pi_1$};
\draw [->] (2*\wp, 2*\htab - 1.5*\hp) -- (0.8*\wtab - \wp, 1.45*\htab + \hp);
\node[right] at (0.25*\wtab + 0.5*\wp ,1.78*\htab ) {\scriptsize $\pi_3$};
\node at (0.25*\wtab, 3.05*\htab)  {} edge [out=35,in=325, loop] ();
\node[right] at (0.65*\wtab, 3.15*\htab) {\scriptsize $\pi_3$};
\node at (0.25*\wtab, 2.05*\htab)  {} edge [out=35,in=325, loop] ();
\node[right] at (0.65*\wtab, 2.15*\htab) {\scriptsize $\pi_2$};
\node at (0.25*\wtab, 1.05*\htab)  {} edge [out=35,in=325, loop] ();
\node[right] at (0.65*\wtab, 1.15*\htab) {\scriptsize $\pi_1$};
\draw [->] (0, 3*\htab - 1.5*\hp) -- (0, 2*\htab+1.5*\hp);
\node[left] at (0.8*\wp, 2.5*\htab) {\scriptsize $\pi_2$};
\draw [->] (0, 2*\htab - 1.5*\hp) -- (0, 1*\htab+1.5*\hp);
\node[left] at (0.8*\wp, 1.5*\htab) {\scriptsize $\pi_1$};
\draw [->] (0, 1*\htab - 1.5*\hp) -- (0, 0.2*\htab+1.5*\hp);
\node[left] at (0.8*\wp, 0.6*\htab) {\scriptsize $\pi_2, \pi_3$};
\end{tikzpicture}
\quad
\begin{tikzpicture}[baseline = 0mm, scale = 0.7]
\node[below] at (0,-0.3) {$\autotheta[\sfB(2134, 4123)]$};
\node[below] at (0.1,-1.2) {$(\cong \osfB(2134, 4123))$};
\node at (0, 0.2*\htab) {$0$};
\node at (0, \htab) {$4123$};
\node at (0, 2*\htab) {$3124$};
\node at (0, 3*\htab) {$2134$};
\node at (0.8*\wtab, 2.55*\htab) {$0$};
\node at (0.8*\wtab, 1.55*\htab) {$0$};
\draw [->] (2*\wp, 3*\htab - 1.5*\hp) -- (0.8*\wtab-\wp, 2.45*\htab+\hp);
\node[right] at (0.25*\wtab + 0.5*\wp ,2.78*\htab) {\scriptsize $-\opi_3$};
\draw [->] (2*\wp, 2*\htab - 1.5*\hp) -- (0.8*\wtab - \wp, 1.45*\htab + \hp);
\node[right] at (0.25*\wtab + 0.5*\wp ,1.78*\htab ) {\scriptsize $-\opi_1$};
\node at (0.25*\wtab, 3.05*\htab)  {} edge [out=35,in=325, loop] ();
\node[right] at (0.65*\wtab, 3.15*\htab) {\scriptsize $-\opi_1$};
\node at (0.25*\wtab, 2.05*\htab)  {} edge [out=35,in=325, loop] ();
\node[right] at (0.65*\wtab, 2.15*\htab) {\scriptsize $-\opi_2$};
\node at (0.25*\wtab, 1.05*\htab)  {} edge [out=35,in=325, loop] ();
\node[right] at (0.65*\wtab, 1.15*\htab) {\scriptsize $-\opi_3$};
\draw [->] (0, 3*\htab - 1.5*\hp) -- (0, 2*\htab+1.5*\hp);
\node[left] at (0.8*\wp, 2.5*\htab) {\scriptsize $-\opi_2$};
\draw [->] (0, 2*\htab - 1.5*\hp) -- (0, 1*\htab+1.5*\hp);
\node[left] at (0.8*\wp, 1.5*\htab) {\scriptsize $-\opi_3$};
\draw [->] (0, 1*\htab - 1.5*\hp) -- (0, 0.2*\htab+1.5*\hp);
\node[left] at (0.8*\wp, 0.6*\htab) {\scriptsize $-\opi_1, -\opi_2$};
\end{tikzpicture}
\quad
\begin{tikzpicture}[baseline = 0mm, scale = 0.7]
\node[below] at (0,-0.3) {$\autochi[\sfB(2134, 4123)]$};
\node[below] at (0.1,-1.2) {$(\cong \osfB(3214, 4312))$};
\node at (0, 0.2*\htab) {$0$};
\node at (0, \htab) {$2134^*$};
\node at (0, 2*\htab) {$-3124^*$};
\node at (0, 3*\htab) {$4123^*$};
\node at (0.35*\wtab, 3.05*\htab)  {} edge [out=35,in=325, loop] ();
\node[right] at (0.75*\wtab, 3.15*\htab) {\scriptsize $-\opi_1,-\opi_2$};
\node at (0.35*\wtab, 2.05*\htab)  {} edge [out=35,in=325, loop] ();
\node[right] at (0.75*\wtab, 2.15*\htab) {\scriptsize $-\opi_1, -\opi_3$};
\node at (0.35*\wtab, 1.05*\htab)  {} edge [out=35,in=325, loop] ();
\node[right] at (0.75*\wtab, 1.15*\htab) {\scriptsize $-\opi_2, -\opi_3$};
\draw [->] (0, 3*\htab - 1.5*\hp) -- (0, 2*\htab+1.5*\hp);
\node[left] at (0.8*\wp, 2.5*\htab) {\scriptsize $-\opi_3$};
\draw [->] (0, 2*\htab - 1.5*\hp) -- (0, 1*\htab+1.5*\hp);
\node[left] at (0.8*\wp, 1.5*\htab) {\scriptsize $-\opi_2$};
\draw [->] (0, 1*\htab - 1.5*\hp) -- (0, 0.2*\htab+1.5*\hp);
\node[left] at (0.8*\wp, 0.6*\htab) {\scriptsize $-\opi_1$};
\end{tikzpicture}
\]
\end{example}

For an (anti-)automorphism $\zeta:H_{m+n}(0) \to H_{m+n}(0)$ and an $H_m(0) \otimes H_n(0)$-module $M$, 
we simply write $\zeta[M]$ for $\zeta|_{H_m(0) \otimes H_n(0)}[M]$.
The subsequent corollary shows that (anti-)involution twists behave nicely with respect to induction product and restriction.
\begin{corollary}\label{cor: automorhpism, induction, restriction}
Let $M$, $N$, and $L$ be weak Bruhat interval modules of $H_m(0)$, $H_n(0)$, and $H_{m+n}(0)$, respectively. 
Then we have following isomorphisms of modules.
\begin{enumerate}[label = {\rm (A\arabic*)}]
\item $\autophi[M \boxtimes N] \cong \autophi[N] \boxtimes \autophi[M]$
\item $\autotheta[M \boxtimes N] \cong \autotheta[M] \boxtimes \autotheta[N]$
\item $\autochi [M \boxtimes N] \cong \autochi [N] \boxtimes \autochi [M]$
\end{enumerate}
\begin{enumerate}[label = {\rm (B\arabic*)}]
\item $\autophi[L \downarrow^{H_{m+n}(0)}_{H_n(0) \otimes H_m(0)}] \cong \autophi[L] \downarrow^{H_{m+n}(0)}_{H_m(0) \otimes H_n(0)}$
\item $\autotheta[L \downarrow^{H_{m+n}(0)}_{H_m(0) \otimes H_n(0)}] 
\cong \autotheta[L] \downarrow^{H_{m+n}(0)}_{H_m(0) \otimes H_n(0)}$
\item $\autochi[L \downarrow^{H_{m+n}(0)}_{H_m(0) \otimes H_n(0)}] \cong \autochi[L] \downarrow^{H_{m+n}(0)}_{H_m(0) \otimes H_n(0)}$
\end{enumerate}
\end{corollary}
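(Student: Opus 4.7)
The plan is to deduce all six isomorphisms from two ingredients: (I) an explicit description of how each of $\autophi$, $\autotheta$, $\autochi$ restricts to the parabolic subalgebra $H_m(0) \otimes H_n(0) \subset H_{m+n}(0)$; and (II) standard functorial identities relating induction and restriction to twists by (anti-)automorphisms.

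For (I), a direct inspection of the images of the generators $\pi_i$ for $i \in [m+n-1] \setminus \{m\}$ under the defining formulas yields that $\autotheta_{m+n}|_{H_m(0) \otimes H_n(0)} = \autotheta_m \otimes \autotheta_n$ (an automorphism of the subalgebra), that $\autochi_{m+n}|_{H_m(0) \otimes H_n(0)} = \autochi_m \otimes \autochi_n$ (an anti-automorphism of the subalgebra), and that $\autophi_{m+n}$ carries $H_m(0) \otimes H_n(0)$ isomorphically onto $H_n(0) \otimes H_m(0)$ via $x \otimes y \mapsto \autophi_n(y) \otimes \autophi_m(x)$.  Each of these is straightforward once one tracks which parabolic factor the image $\zeta(\pi_i)$ lies in.

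Given (I), the restriction identities (B1)--(B3) are essentially tautological: in every case both sides have the same underlying vector space, and the action of $x \otimes y$ is by definition the original action of $\zeta(x \otimes y)$ on $L$ (on $L^*$ in the $\autochi$ case). For the induction identities (A1) and (A2), I would invoke the natural isomorphism
\[
\mu[A \otimes_{B_1} V] \;\cong\; A \otimes_{B_2} \mu|_{B_1}[V], \qquad a \otimes v \longmapsto \mu(a) \otimes v,
\]
valid for any involutive algebra automorphism $\mu: A \to A$ restricting to an iso $B_1 \xrightarrow{\sim} B_2$ of subalgebras (the involutivity $\mu^2 = \id$ is what makes the map both well-defined on the balanced tensor product and $A$-equivariant). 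Combining this with (I) gives $\autotheta[M \boxtimes N] \cong \autotheta[M] \boxtimes \autotheta[N]$ for (A2), and $\autophi[M \boxtimes N] \cong \autophi[N] \boxtimes \autophi[M]$ for (A1), with the factor swap on the right arising exactly from the subalgebra swap in (I).

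The main obstacle will be (A3). Since $\autochi$ is an anti-automorphism, the $\autochi$-twist is a contravariant (duality) functor, so the naive dual of an induced module is a coinduced module rather than an induced one. I would handle this by an explicit basis-level construction: using the basis $\{\pi_\delta \otimes (v \otimes w) : \delta \in \frakR_{m,n}\}$ of $M \boxtimes N$ exhibited in the proof of Theorem~\ref{thm: tensor product} together with the analogous basis of $\autochi[N] \boxtimes \autochi[M]$ indexed by $\frakR_{n,m}$, I define a $\C$-linear map via the involutive bijection $\frakR_{m,n} \leftrightarrow \frakR_{n,m}$ sending $\delta \mapsto w_0 \delta w_0$, and verify $H_{m+n}(0)$-equivariance by tracing each $\pi_i$-action through the restriction formula in (I) and the anti-multiplicativity of $\autochi$. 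Alternatively, one may invoke the Frobenius structure of $H_{m+n}(0)$, under which coinduction and induction agree up to twist by the Nakayama automorphism $\autophi$, and thereby reduce (A3) to (A1) combined with the identities for $\autophi$ established in Theorem~\ref{thm: auto twist part 1}.
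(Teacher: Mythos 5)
Your proposal is correct, and for (A3) it takes a genuinely different route from the paper. For (A1), (A2), and (B1)--(B3) the paper simply asserts that the isomorphisms are ``straightforward from the definitions''; your step (I) plus the functorial identity spells out what that means, which is essentially the same reasoning. (One small notational slip: in your displayed identity $\mu[A \otimes_{B_1} V] \cong A \otimes_{B_2} \mu|_{B_1}[V]$, the twist on the right-hand side should be $(\mu|_{B_2})[V]$, which is the $B_2$-module obtained from $V$ via the iso $\mu|_{B_2}\colon B_2 \to B_1$; this coincides with $(\mu|_{B_1})^{-1}[V]$, and your appeal to $\mu^2 = \id$ is what lets you write $\mu$ on both sides rather than $\mu^{-1}$ --- it is not in fact needed for well-definedness of the balanced tensor product, only for the final bookkeeping.)

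For (A3), the paper avoids dealing with the anti-automorphism at the functorial level entirely: it substitutes $M = \sfB(\sigma_1,\rho_1)$, $N = \sfB(\sigma_2,\rho_2)$, computes both sides explicitly as $\overline{\sfB}$ of specific intervals using Theorem~\ref{thm: tensor product} and Theorem~\ref{thm: auto twist part 1}~(3), and then observes that the two intervals coincide because right-multiplication by $w_0$ is reversal in one-line notation. Your two alternatives --- a direct basis-level equivariance check via the bijection $\delta \mapsto w_0\delta w_0$ between $\frakR_{m,n}$ and $\frakR_{n,m}$, or a reduction through the Frobenius/Nakayama relationship $\mathrm{coInd}\cong\upnu\circ\mathrm{Ind}$ --- are both viable and more intrinsic. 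The paper's route has the advantage of reusing already-proven formulas and avoiding the duality subtleties you correctly identify as the main obstacle; your second alternative has the advantage of not depending on the specific shape of weak Bruhat interval modules, so it would generalize to a wider class of modules, but it requires filling in the precise form of the Frobenius-extension statement for $H_m(0)\otimes H_n(0) \subset H_{m+n}(0)$, which is not entirely trivial.
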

\begin{proof}
(A1), (A2), (B1), (B2), and (B3) are straightforward from the definitions of $\autophi$, $\autotheta$, and $\autochi$. 

For (A3), let $M=\sfB(\sigma_1, \rho_1)$ and $N=\sfB(\sigma_2, \rho_2)$. 
Then, by Lemma~\ref{lem: tensor product} and Theorem~\ref{thm: auto twist part 1}~(3),
$$
\autochi[M \boxtimes N] \cong \osfB(\rho_1 \ostar \rho_2 \; w_0^{(m+n)}, \sigma_1  \conc  \sigma_2 \; w_0^{(m+n)})
$$
and
\begin{align*}
\autochi[N] \boxtimes \autochi[M] 
&\cong \osfB(\rho_2 \; w_0^{(n)}, \sigma_2 \; w_0^{(n)}) \boxtimes \osfB(\rho_1 \; w_0^{(m)}, \sigma_1 \; w_0^{(m)}) \\
&\cong \osfB(\rho_2 \; w_0^{(n)}  \conc  \rho_1 \; w_0^{(m)}, \sigma_2 \; w_0^{(n)} \ostar \sigma_1 \; w_0^{(m)}).
\end{align*}
Here, the notation $w_0^{(k)}$ denotes the longest element in $\SG_k$ and the last isomorphism follows from Lemma~\ref{lem: tensor product} and (A2).
Therefore, the assertion follows from the fact that the one-line notation of $\sigma w_0$ is obtained by reversing $\sigma$ for any permutation $\sigma$.
\end{proof}

\section{Various $0$-Hecke modules constructed using tableaux}\label{sec: Examples of WBI modules}

Suppose we have a family of quasisymmetric functions that can be expanded in the basis of the fundamental quasisymmetric functions with positive coefficients.
The correspondence \eqref{quasi characteristic} tells us that 
each of them appears as the image of the isomorphism classes of 
certain $H_n(0)$-modules.  
Among them, it would be very nice
to find or construct one which is nontrivial, in other words,
not a direct sum of irreducible modules
and has a combinatorial model that can be handled well.     
Since the mid-2010s, some $H_n(0)$-modules have been constructed in line with this philosophy, more precisely,   
in~\cite{20BS, 15BBSSZ, 19Searles, 15TW}.
In this section, we show that all of them are equipped with the structure of weak Bruhat interval modules. 

To deal with these modules, we need the notion of source and sink.

\begin{definition} \label{def: of source and sink}
Let $B$ be a basis for an $H_n(0)$-module such that $B \cup \{0\}$ is closed under the action of $\{\pi_i \mid 1 \le i \le n-1\}$.
\begin{enumerate}[label = {\rm (\arabic*)}]
\item 
An element $x_0 \in B$ is called a \emph{source of $B$} if, 
for each $x \in B$, there exists $\sigma \in \SG_n$ such that $\pi_\sigma \cdot x_0 = x$.
\item 
An element $x'_0 \in B$ is called a \emph{sink of $B$} if, 
for each $x \in B$, there exists $\sigma \in \SG_n$ such that $\pi_\sigma \cdot x = x'_0$.
\end{enumerate}
\end{definition}

Following the way as in \cite{15TW}, one can see that  
there are at most one source and sink in $B$.
In case where $B$ is the basis $[\sigma, \rho]_L$ for $\sfB(\sigma, \rho)$, $\sigma$ is the source and $\rho$ is the sink.

Hereafter, $\alpha$ denotes a composition of $n$.
To introduce the tableaux in our concern, we need to define the \emph{composition diagram $\tcd(\alpha)$ of shape $\alpha$}.
It is a left-justified array of $n$ boxes where the $i$th row from the top has $\alpha_i$ boxes for $1 \le i \le k$.
For a filling $\tau$ of $\tcd(\alpha)$, we denote by $\tau_{i,j}$ the entry in the $i$th row from the top and $j$th column from the left.

\subsection{Standard immaculate tableaux, standard extended tableaux, and their $H_n(0)$-modules}

We begin with introducing the definition of standard immaculate tableaux and standard extended tableaux.

\begin{definition}\label{def: SIT}{\rm (\cite{15BBSSZ,19Searles})}
Let $\alpha$ be a composition of $n$. 
\begin{enumerate}
\item 
A \emph{standard immaculate tableau of shape $\alpha$} is a filling $\calT$ of the composition diagram $\tcd(\alpha)$ with $\{1,2,\ldots,n\}$ such that the entries are all distinct, the entries in each row increase from left to right, and the entries in the first column increase from top to bottom.
\item
A \emph{standard extended tableau of shape $\alpha$} is a filling $\sfT$ of the composition diagram $\tcd(\alpha)$ with $\{1,2,\ldots,n\}$ such that the entries are all distinct, the entries in each row increase from left to right, and the entries in each column increase from top to bottom.
\end{enumerate}
\end{definition}

We remark that our standard extended tableaux are slightly different from those of Searles~\cite{19Searles}.
In fact, the former can be obtained by flipping the latter horizontally.

Denote by $\SIT(\alpha)$ the set of all standard immaculate tableaux of shape $\alpha$ and 
by $\SET(\alpha)$ the set of all standard extended tableaux of shape $\alpha$.
Berg et al.~\cite{15BBSSZ} define a $0$-Hecke action on $\SIT(\alpha)$ and denote the resulting module by $\calV_\alpha$.
And, Searles~\cite{19Searles} define a $0$-Hecke action on $\SET(\alpha)$ and denote the resulting module by $X_\alpha$.
By the construction of $\calV_\alpha$ and $X_\alpha$, it is clear that $\SIT(\alpha)$ and $\SET(\alpha)$ are bases for $\calV_\alpha$ and $X_\alpha$, respectively.

It is not difficult to show that both $\SIT(\alpha)$ and $\SET(\alpha)$ have a unique source and a unique sink.
Denote the source of $\SIT(\alpha)$ by $\calT_\alpha$ and the source of $\SET(\alpha)$ by $\sfT_\alpha$. 
They are obtained by filling $\tcd(\alpha)$ with entries $1, 2, \ldots, n$ from left to right and from top to bottom.
Denote the sink of $\SIT(\alpha)$ by $\calT'_\alpha$ and the sink of $\SET(\alpha)$ by $\sfT'_\alpha$.
In contrast of $\calT_\alpha$ and $\sfT_\alpha$, $\calT'_\alpha$ and $\sfT'_\alpha$ have to be constructed separately.
The former $\calT'_\alpha$ is obtained from $\tcd(\alpha)$ in the following steps:
\begin{enumerate}[label = {\rm (\arabic*)}]
\item Fill the first column with entries $1,2,\ldots, \ell(\alpha)$ from top to bottom.
\item Fill the remaining boxes with entries $\ell(\alpha) + 1, \ell(\alpha) + 2, \ldots, n$ from left to right from bottom to top.
\end{enumerate}
On the other hand, the latter $\sfT'_\alpha$ is obtained by filling $\tcd(\alpha)$ with the entries $1,2,\ldots, n$ from top to bottom and from left to right.

\begin{definition}\label{def: reading word}
For a filling $T$ of a composition diagram, $\rmread(T)$ is defined to be the word obtained from $T$ by reading the entries from right to left starting with the top row.
\end{definition}

With this definition, we can state the following theorem.
 
\begin{theorem}\label{thm: Va Xa pi opi form}
For any $\alpha \models n$, we have the $H_n(0)$-module isomorphisms
\begin{align}\label{eq: isomorphisms for Va and Xa}
\calV_\alpha \cong \sfB(\rmread(\calT_\alpha),\rmread(\calT'_\alpha)) 
\quad \text{and} \quad
X_\alpha \cong \sfB(\rmread(\sfT_\alpha),\rmread(\sfT'_\alpha)).
\end{align}
Here, the words in the parentheses are being viewed as permutations in one-line notation.
\end{theorem}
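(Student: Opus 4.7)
The plan is to define the $\C$-linear map $\Psi: \calV_\alpha \to \sfB(\rmread(\calT_\alpha), \rmread(\calT'_\alpha))$ by $\calT \mapsto \rmread(\calT)$ on the basis $\SIT(\alpha)$, and analogously $\Phi: X_\alpha \to \sfB(\rmread(\sfT_\alpha), \rmread(\sfT'_\alpha))$ on the basis $\SET(\alpha)$, and then to verify that each is an $H_n(0)$-module isomorphism. I will describe the argument for $\calV_\alpha$; the case of $X_\alpha$ proceeds in parallel, with column-strictness replacing first-column-strictness in the combinatorial lemmas.

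The first step is a descent-preservation lemma: for any $\calT \in \SIT(\alpha)$, the set of indices $i$ for which $\pi_i \cdot \calT = \calT$ in $\calV_\alpha$ coincides with $\Des_L(\rmread(\calT))$. With the reading convention of right to left, top to bottom, and using the characterization of $\Des_L(\gamma)$ from \eqref{eq: Des alternating def} as the set of $i$ such that $i$ is right of $i+1$ in the one-line notation of $\gamma$, this is a direct unpacking of the definition of $\rmread$ together with the fact that $i$ and $i+1$ cannot appear in the same row of a standard immaculate tableau with $i$ to the right of $i+1$.

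The second step is to verify that $\Psi$ is $H_n(0)$-equivariant. Thanks to the descent lemma, the only non-trivial cases occur when $i \notin \Des(\calT)$. When $\pi_i \cdot \calT = s_i(\calT)$ yields a valid tableau, exchanging $i$ and $i+1$ in $\calT$ corresponds precisely to left multiplication of $\rmread(\calT)$ by $s_i$; thus $\rmread(\pi_i \cdot \calT) = s_i \cdot \rmread(\calT)$, and this is a covering relation in $\preceq_L$. When $\pi_i \cdot \calT = 0$, I must show that $s_i \cdot \rmread(\calT) \notin [\rmread(\calT_\alpha), \rmread(\calT'_\alpha)]_L$. The $0$-Hecke action kills $\calT$ precisely when the swap would violate the first-column-increasing rule of $\SIT(\alpha)$, and in that configuration I would prove $s_i \cdot \rmread(\calT) \not\preceq_L \rmread(\calT'_\alpha)$ by comparing the relative positions of $i$ and $i+1$ in the sink $\rmread(\calT'_\alpha)$, whose explicit construction (first column filled $1,2,\dots,\ell(\alpha)$ in row order, remaining boxes filled bottom-up) makes the comparison local.

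Given equivariance, injectivity of $\Psi$ is automatic since distinct tableaux yield distinct reading words, and surjectivity follows from the source/sink structure: since $\calT_\alpha$ is the source of $\SIT(\alpha)$, every $\calT$ equals $\pi_\sigma \cdot \calT_\alpha$ for some $\sigma \in \SG_n$, whence by equivariance $\rmread(\calT) = \pi_\sigma \cdot \rmread(\calT_\alpha)$; and because $\rmread(\calT_\alpha)$ is the source of the weak Bruhat interval, which is generated by $\pi$-actions from its source in $\sfB(\rmread(\calT_\alpha), \rmread(\calT'_\alpha))$, this covers a spanning set of the target. The main obstacle is the ``killing'' case of the equivariance check: one must verify that whenever the $0$-Hecke action on $\calV_\alpha$ (resp.\ $X_\alpha$) sends a tableau to zero, the corresponding permutation swap leaves the interval through the top, i.e.\ fails the $\preceq_L \rmread(\calT'_\alpha)$ bound. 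Although this reduces to a local comparison involving only the entries $i$ and $i+1$, the explicit matching between the shape constraints defining the tableaux and the combinatorics of the sink permutation is the substantive content of the proof.
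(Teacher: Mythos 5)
Your proposal takes a genuinely different route from the paper. The paper does not attempt a direct combinatorial verification of equivariance; instead it composes three pre-existing maps: Huang's isomorphism $\mathsf{w}:\autotheta[\bfP_{\alpha^\rmc}]\to\calP_\alpha$, the essential epimorphism $\mPhi:\autotheta[\bfP_{\alpha^\rmc}]\twoheadrightarrow\calV_\alpha$ from \cite{20CKNO2}, and the embedding $\sfem:\sfB(w_0(\alpha^\rmc),w_0w_0(\alpha))\to\calP_\alpha$ of Theorem~\ref{thm: embedding}(3). It then identifies the kernel of $\tPhi\circ\sfem$ with the kernel of the obvious projection $\pr$ onto $\sfB(\rmread(\calT_\alpha),\rmread(\calT'_\alpha))$. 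This sidesteps having to analyze the ``kill'' case of the $0$-Hecke action on $\SIT(\alpha)$ by hand, because the essential epimorphism already encodes exactly which elements $\pi_\gamma\opi_{w_0(\alpha)}$ map to zero. Your direct approach is more self-contained (it does not rely on the projective-cover machinery from \cite{20CKNO2}), which is a real gain, but it pushes the hard combinatorics into the open.

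And that is where a genuine gap appears. You correctly identify the crux — showing that whenever $\pi_i\cdot\calT=0$ in $\calV_\alpha$, the permutation $s_i\,\rmread(\calT)$ fails the bound $\preceq_L\rmread(\calT'_\alpha)$ — but you do not prove it; you only describe it as ``the substantive content of the proof.'' This is not a small verification one can wave past: it requires matching the shape constraints of $\SIT(\alpha)$ (or column-strictness for $\SET(\alpha)$) against the combinatorics of $\rmread(\calT'_\alpha)$ in a way that must be carried out in detail, and the paper deliberately chose a different strategy precisely to avoid doing exactly this. There is also an unaddressed well-definedness issue prior to equivariance: before you can discuss $\pi_i$ acting on $\rmread(\calT)$ \emph{inside} $\sfB(\rmread(\calT_\alpha),\rmread(\calT'_\alpha))$, you need to know $\rmread(\calT)\in[\rmread(\calT_\alpha),\rmread(\calT'_\alpha)]_L$ for every $\calT\in\SIT(\alpha)$; the source/sink argument you invoke for surjectivity can be adapted for this, but as written the order of the argument is circular (you appeal to equivariance to establish membership in the interval while equivariance itself is what is being proved). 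Finally, surjectivity of $\rmread$ onto the interval — that every $\gamma\in[\rmread(\calT_\alpha),\rmread(\calT'_\alpha)]_L$ arises as a reading word — is asserted but not argued; the paper handles this via the observation that the $0$-Hecke action on $\SIT(\alpha)$ satisfies the braid relations, which is the ingredient your sketch would also need to invoke. In short: the architecture is sound and plausibly completable, but the lemma you flag as substantive is missing, and the well-definedness and surjectivity steps need to be made non-circular.
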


\begin{proof}
To prove the first isomorphism in~\eqref{eq: isomorphisms for Va and Xa}, we need the $H_n(0)$-module homomorphisms
\begin{equation}\label{eq: morphism sequence}
\begin{tikzcd}[column sep=large]
\autotheta[\bfP_{\alpha^\rmc}] \arrow[r, "\mathsf{w}"] 
&
\calP_\alpha 
\end{tikzcd}
\quad \text{and} \quad
\begin{tikzcd}[column sep=large]
\autotheta[\bfP_{\alpha^\rmc}] \arrow[r,two heads, "\mPhi"] 
& \calV_\alpha,
\end{tikzcd}
\end{equation}
where
\begin{enumerate}[label = -, leftmargin = *]
\item the notation $\bfP_{\alpha^\rmc}$ denotes the projective indecomposable module spanned by the standard ribbon tableaux of shape $\alpha^\rmc$ in~\cite[Subsection 3.2]{16Huang},
\item the first homomorphism $\mathsf{w}$ is an isomorphism given in~\cite[Theorem 3.3 and Proposition 5.1]{16Huang}, which is given by reading
standard ribbon tableaux from left to right starting with the bottom row, and
\item the second homomorphism $\mPhi$ is an essential epimorphism given in~\cite[Theorem 3.2]{20CKNO2}.
\end{enumerate}
Composing $\mathsf{w}^{-1}$ with $\mPhi$ yields a surjective $H_n(0)$-module homomorphism $\tPhi: \calP_\alpha \ra \calV_\alpha$.
In view of the definition of $\mathsf{w}$ and $\mPhi$, one can see that 
\begin{align*}
\tPhi(\pi_\gamma \opi_{w_0(\alpha)}) = 
\begin{cases}
\calT & \text{if $\rmread(\calT) = \gamma$ for some $\calT \in \SIT(\alpha)$,}\\
0 & \text{else}.
\end{cases}
\end{align*}
Composing $\tPhi$ with the isomorphism 
\[
\sfem: \sfB(w_0(\alpha^\rmc), w_0 w_0(\alpha)) \ra \calP_{\alpha} \quad \text{(in Theorem~\ref{thm: embedding}~(3))},
\]
we finally have the surjective $H_n(0)$-module homomorphism
\[
\tPhi \circ \sfem: \sfB(w_0(\alpha^\rmc), w_0 w_0(\alpha)) \ra \calV_\alpha.
\]
Next, let us consider the projection 
\begin{align*}
\pr: \sfB(w_0(\alpha^\rmc), w_0 w_0(\alpha)) 
& \ra \sfB(\rmread(\calT_\alpha), \rmread(\calT'_\alpha)), \\ 
\gamma \quad 
& \mapsto 
\begin{cases}
\gamma & \text{if $\gamma \in [\rmread(\calT_\alpha), \rmread(\calT'_\alpha)]_L$},\\
0 & \text{else.}
\end{cases}
\end{align*}
By the definition of $\tPhi$, one sees that $\rmread(\calT_\alpha) = w_0(\alpha^\rmc)$ and $\rmread(\calT'_\alpha) \preceq_L w_0 w_0(\alpha)$. This implies that $\pr$ is a surjective $H_n(0)$-module homomorphism.

For our purpose, we have only to show $\ker(\tPhi \circ \sfem) = \ker(\pr)$.
From the definition of the $H_n(0)$-action on $\calV_\alpha$ it follows that $\rmread(\calT) \in [\rmread(\calT_\alpha), \rmread(\calT'_\alpha)]_L$ for any $\calT \in \SIT(\alpha)$ and therefore $\ker(\tPhi \circ \sfem) \supseteq \ker(\pr)$.
On the other hand, using the fact that the $0$-Hecke action on $\SIT(\alpha)$ satisfies the braid relations, one can show that every $\gamma \in [\rmread(\calT_\alpha), \rmread(\calT'_\alpha)]_L$ appears as $\rmread(\calT)$ for some $\calT \in \SIT(\alpha)$.
This says that $\ker(\tPhi \circ \sfem)^\rmc \supseteq \ker(\pr)^\rmc$, so we are done.

The second isomorphism in~\eqref{eq: isomorphisms for Va and Xa} can be obtained by replacing $\mPhi$ with $\Gamma \circ \mPhi$ in~\eqref{eq: morphism sequence}, where $\Gamma: \calV_\alpha \ra X_\alpha$ is the surjection in~\cite[Subsection 3.2]{20CKNO2}.
\end{proof}

\subsection{Standard permuted composition tableaux and their $H_n(0)$-modules}

We begin with introducing the definition of standard permuted composition tableaux.
In this subsection, $\upsig$ denotes a permutation in $\SG_{\ell(\alpha)}$.

\begin{definition}\label{def: PCT}{\rm (\cite{19TW})}
Given $\alpha \models n$ and $\upsig \in \SG_{\ell(\alpha)}$, a \emph{standard permuted composition tableau $(\SPCT)$ of shape $\alpha$ and type $\upsig$} is a filling $\tau$ of $\tcd(\alpha)$ with entries in $\{1,2,\ldots,n\}$ such that the following conditions hold:
\begin{enumerate}[label = {\rm (\arabic*)}]
\item The entries are all distinct.
\item The standardization of the word obtained by reading the first column from top to bottom is $\upsig$.
\item The entries in each rows decrease from left to right.
\item If $i<j$ and $\tau_{i,k} > \tau_{j,k+1}$, then $(i,k+1) \in \tcd(\alpha)$ and $\tau_{i,k+1} > \tau_{j,k+1}$.
\end{enumerate}
\end{definition}
Denote by $\SPCTsa$ the set of all standard permuted composition tableaux of shape $\alpha$ and type $\upsig$.
Tewari and van Willigenburg define a $0$-Hecke action on $\SPCTsa$ and denote the resulting module by $\bfSsa$.
Contrary to $\calV_\alpha$ and $X_\alpha$, this module is not indecomposable in general.
In the following, we briefly explain how to decompose $\bfSsa$ into indecomposables.

For $\tau, \tau' \in \SPCTsa$, define $\tau \sim \tau'$ if for each positive integer $k$, the relative order of the entries in the $k$th column of $\tau$ is equal to that of $\tau'$.
This relation is an equivalence relation on $\SPCTsa$.
Let $\calEsa$ be the set of all equivalence classes under $\sim$.
Every class in $\calEsa$ is closed under the $0$-Hecke action, which gives rise to the decomposition
\begin{align*}
\bfSsa = \bigoplus_{E \in \calEsa} \bfSsaE,
\end{align*}
where  $\bfSsaE$ is the $H_n(0)$-module spanned by $E$.
All the results in the above can be found in~\cite{19TW}.
The decomposition was improved in~\cite{20CKNO} by showing that every direct summand is indecomposable.

\subsubsection{Weak Bruhat interval module structure of $\bfSsaE$}

We show that $\bfSsaE$ is isomorphic to a weak Bruhat interval module.
To do this, we need a special reading of standard permuted composition tableaux.
Let us introduce the necessary notations and the results.
Given $\tau \in \SPCTsa$, let 
\[
\calD(\tau) := \{1 \le i \le n-1 \mid \text{$i+1$ lies weakly right of $i$ in $\tau$}  \}.
\]
It should be noticed that the set $\calD(\tau)$ plays the same role as the complement of $\Des_L(\gamma)$ since
\[
\pi_i \cdot \gamma \neq \gamma 
\quad \text{if and only if} \quad 
i \notin \Des_L(\gamma)
\quad \text{for $\gamma \in [\sigma, \rho]_L$,}
\]
but
\[
\pi_i \cdot \tau \neq \tau
\quad \text{if and only if} \quad
i \in \calD(\tau)
\quad \text{for $\tau \in \SPCTsa$}. 
\]

Every equivalence class $E$ has a unique source and a unique sink.
Denote the source by $\tauE$ and the sink by $\tau_E'$.
Let $m_E^{~}$ be the number of elements in $\calD(\tauE)$ and set
\[
\calD(\tauE) = \left\{d_1 < d_2 < \ldots < d_{m_E^{~}}\right\}, \quad d_0 := 0, \quad \text{and} \quad d_{m_E^{~}+1} := n.
\]
For $1 \leq j \leq m_E^{~}+1$, let $\tH_j$ be the horizontal strip occupied by the boxes with entries from $d_{j-1}+1$ to $d_j$ in $\tauE$.
For each $\tau \in E$, let $\tau(\tH_j)$ be the subfilling of $\tau$ occupied by $\tH_j$ in $\tcd(\alpha)$.

\begin{definition}\label{def: reading for SPCT}
For $\tau \in E$ and $1 \le j \le m_E^{~} + 1$, let $\rmw^{(j)}(\tau)$ be the word obtained by reading $\tau(\tH_j)$ from left to right.
The reading word, $\tread(\tau)$, of $\tau$ is defined to be the word $\rmw^{(1)}(\tau) \ \rmw^{(2)}(\tau) \  \cdots \  \rmw^{(m_E^{~} + 1)}(\tau)$.
\end{definition}

\begin{example}\label{eg: equiv class E_0}
Let 
$E_0 = \left\{
\begin{array}{c}
\scalebox{0.8}{
\begin{ytableau}
4&3&2 \\
5&1
\end{ytableau}
}
\end{array},
\begin{array}{c}
\scalebox{0.8}{
\begin{ytableau}
4&3&1 \\
5&2
\end{ytableau}
}
\end{array}
\right\} \in \calE^{\id}((3,2))$.
We have
\[
\tau^{~}_{E_0} = \begin{array}{c}
\scalebox{0.8}{
\begin{ytableau}
4&3&2 \\
5&1
\end{ytableau}
}
\end{array},
\quad
\calD(\tau_{E_0}^{~}) = \{1,4\},
\quad \text{and} \quad
\begin{array}{l}
\scalebox{0.65}{
\begin{tikzpicture}
\def \ud {0.65}
\draw[] (0,0) -- (\ud,0) -- (\ud,\ud) -- (0,\ud) -- (0,0);
\draw[] (\ud,0) -- (2*\ud,0) -- (2*\ud,\ud) -- (\ud,\ud) -- (\ud,0);
\draw[] (0,\ud) -- (3*\ud,\ud) -- (3*\ud,2*\ud) -- (0,2*\ud) -- (0,\ud);
\draw[dotted] (\ud,\ud) -- (\ud,2*\ud);
\draw[dotted] (2*\ud,\ud) -- (2*\ud,2*\ud);

\node at (1.5*\ud,0.5*\ud) {\Large $\tH_1$};
\node at (1.5*\ud,1.5*\ud) {\Large $\tH_2$};
\node at (0.5*\ud,0.5*\ud) {\Large $\tH_3$};
\node at (0,2.2*\ud) {};
\end{tikzpicture}
}
\end{array}.
\]
Therefore,
\[
\tread\left(
\begin{array}{c}
\scalebox{0.8}{
\begin{ytableau}
4&3&2 \\
5&1
\end{ytableau}
}
\end{array}
\right) = 14325
\quad \text{and} \quad
\tread\left(
\begin{array}{c}
\scalebox{0.8}{
\begin{ytableau}
4&3&1 \\
5&2
\end{ytableau}
}
\end{array}
\right) = 24315.
\]
\end{example}
Next, we introduce the notion of generalized compositions.
A generalized composition $\bal$ of $n$ is defined to be a formal sum $\beta^{(1)} \oplus \beta^{(2)} \oplus \cdots \oplus  \beta^{(k)}$,
where $\beta^{(i)} \models n_i$ for positive integers $n_i$'s with $n_1 + n_2 + \cdots + n_k = n$.
Given $\bal = \beta^{(1)} \oplus \beta^{(2)} \oplus \cdots \oplus  \beta^{(k)}$, let $\bal^\rmc := (\beta^{(1)})^\rmc \oplus (\beta^{(2)})^\rmc \oplus \cdots \oplus  (\beta^{(k)})^\rmc$ and $w_0(\bal) := w_0(\beta^{(1)} \cdot \beta^{(2)} \cdot \  \cdots \  \cdot \beta^{(k)})$, 
where $\beta^{(i)} \cdot \beta^{(i+1)}$ is the concatenation of $\beta^{(i)}$ and $\beta^{(i+1)}$ for $1 \le i \le k-1$.
Let $\calP_\bal := H_n(0)\pi_{w_0(\bal^\rmc)} \opi_{w_0 (\bal)}$.

Choi, Kim, Nam, and Oh~\cite{20CKNO2} found the projective cover of $\bfSsaE$ by constructing an essential epimorphism $\eta: \autotheta[\bfP_{\bal(\alpha,\sigma;E)^\rmc}] \to \bfSsaE$, where $\bal(\alpha,\sigma;E)$ is a generalized composition defined by using the source of $E$ in a suitable manner and  $\bfP_{\bal(\alpha,\sigma;E)^\rmc}$ is the projective module spanned by standard ribbon tableaux of shape $\bal(\alpha,\sigma;E)^\rmc$.
From now on, we simply write $\bal_E$ for $\bal(\alpha,\sigma;E)$ since $E$ contains information on $\alpha$ and $\sigma$.
For the details, see~\cite[Subsection 2.3 and Section 5]{20CKNO2}.

Now, we are ready to prove the following theorem.

\begin{theorem}\label{thm: bfSsaE pi opi form}
Let $\alpha \models n$ and $\upsig \in \SG_{\ell(\alpha)}$.
For each $E \in \calEsa$, 
\[
\bfSsaE \cong \sfB(\tread(\tauE), \tread(\tau'_E)).
\]
\end{theorem}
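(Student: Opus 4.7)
The plan is to mimic the proof of Theorem~\ref{thm: Va Xa pi opi form}, replacing the projective module $\autotheta[\bfP_{\alpha^\rmc}]$ with the larger projective cover $\autotheta[\bfP_{\bal_E^\rmc}]$ indexed by the generalized composition $\bal_E$ attached to $E$. First I would invoke the essential epimorphism $\eta:\autotheta[\bfP_{\bal_E^\rmc}]\twoheadrightarrow \bfSsaE$ established in~\cite{20CKNO2}, and compose it with a ``Huang-type'' isomorphism $\mathsf{w}:\autotheta[\bfP_{\bal_E^\rmc}]\to \calP_{\bal_E}=H_n(0)\pi_{w_0(\bal_E^\rmc)}\opi_{w_0(\bal_E)}$ obtained by reading standard ribbon tableaux of shape $\bal_E^\rmc$ from left to right starting with the bottom row. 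This yields a surjective $H_n(0)$-homomorphism $\widetilde{\eta}:\calP_{\bal_E}\twoheadrightarrow \bfSsaE$.

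Next, the analog of Theorem~\ref{thm: embedding}\,(3) for generalized compositions gives an isomorphism $\sfem:\sfB(w_0(\bal_E^\rmc), w_0 w_0(\bal_E))\xrightarrow{\sim}\calP_{\bal_E}$; this goes through verbatim since the key lemmas (Lemma~\ref{lem: constraint of braid}, Lemma~\ref{lem: Des and last pi_i}, Lemma~\ref{lem: zero characterization}) are statements about pairs of permutations $\sigma,\rho\in\SG_n$ and make no use of $\alpha$ being an ordinary composition. Tracing through the definitions of $\mathsf{w}$ and $\eta$, the composition $\widetilde{\eta}\circ\sfem$ should send $\gamma\in[w_0(\bal_E^\rmc), w_0 w_0(\bal_E)]_L$ to the unique $\tau\in E$ with $\tread(\tau)=\gamma$ if such $\tau$ exists and to $0$ otherwise. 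Composing with the projection $\pr:\sfB(w_0(\bal_E^\rmc), w_0 w_0(\bal_E))\twoheadrightarrow \sfB(\tread(\tauE),\tread(\tau_E'))$, I want to show $\ker(\widetilde{\eta}\circ\sfem)=\ker(\pr)$.

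The containment $\ker(\widetilde{\eta}\circ\sfem)\supseteq\ker(\pr)$ reduces to checking $\tread(\tau)\in[\tread(\tauE),\tread(\tau_E')]_L$ for every $\tau\in E$, which should follow directly from the description of the source and sink of $E$ together with the fact that the $0$-Hecke action on $E$ corresponds to left multiplication by simple reflections on reading words (this is essentially how $\tread$ was designed via the strips $\tH_j$). For the reverse containment, I must verify that every permutation in the interval $[\tread(\tauE),\tread(\tau_E')]_L$ is realized as $\tread(\tau)$ for some $\tau\in E$. Here I would proceed by induction on length, using the fact that the operators $\pi_i$ satisfy the braid relations on $E$ (Proposition~\ref{prop: H_n(0)-action on IM} plus the explicit description of the action on $\SPCTsa$), so that applying $\pi_i$ to $\tread(\tau)$ for $i\notin\Des_L(\tread(\tau))$ either yields zero or a reading word of another element of $E$; this produces every cover relation upward from $\tread(\tauE)$ that stays inside the interval.

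The main obstacle, as anticipated in the analogous proof for $\calV_\alpha$ and $X_\alpha$, is the surjectivity statement in the last paragraph: controlling precisely which permutations in the weak Bruhat interval are hit by $\tread$. The argument requires a compatibility between the combinatorial action of $\pi_i$ on tableaux in $E$ and the left weak Bruhat order on reading words, specifically that $\pi_i\cdot\tread(\tau)=\tread(\pi_i\cdot\tau)$ whenever the right-hand side is nonzero, and that no permutation ``in between'' is skipped. This is the place where the delicate choice of horizontal strips $\tH_j$ in Definition~\ref{def: reading for SPCT} (which depends on $\calD(\tauE)$) is essential; a standard column or row reading would not suffice, as illustrated by the failure of Tewari--van Willigenburg's reading $\col_\tau$ noted in Remark~\ref{rem: SPCT conj}.
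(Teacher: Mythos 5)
Your proposal is correct and follows essentially the same route as the paper: invoke the essential epimorphism of~\cite{20CKNO2}, compose with Huang's reading isomorphism to identify the projective cover with a left ideal $\calP$, transport across Theorem~\ref{thm: embedding}~(3) to land in a weak Bruhat interval module, and then show that the kernel of the resulting surjection equals the kernel of the projection onto $\sfB(\tread(\tauE),\tread(\tau_E'))$, with the hard direction being the surjectivity of $\tread$ onto the interval. Your indices ($\bfP_{\bal_E^\rmc}$ and $\calP_{\bal_E}$) are dual to those in the paper's proof display (which uses $\bfP_{\bal_E}$ and $\calP_{\bal_E^\rmc}$), but the paper's own text paragraph introducing $\eta$ uses your convention, so this reflects a $^\rmc$ bookkeeping inconsistency within the paper's exposition rather than an error on your part; either convention yields the same argument once carried through consistently.
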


\begin{proof}
To prove our assertion, we need the $H_n(0)$-module homomorphisms
\begin{equation*}
\begin{tikzcd}[column sep=large]
\autotheta[\bfP_{\bal_E}] \arrow[r, "\mathsf{w}"] 
&
\calP_{\bal_E^\rmc} 
\end{tikzcd}
\quad \text{and} \quad
\begin{tikzcd}[column sep=large]
\autotheta[\bfP_{\bal_E}] \arrow[r,two heads, "\eta"] 
& \bfSsaE,
\end{tikzcd}
\end{equation*}
where
\begin{enumerate}[label = -, leftmargin = *]
\item the notation $\bfP_{\bal_E}$ denotes the projective module spanned by the generalized ribbon tableaux of shape $\bal_E$,
\item the first homomorphism $\mathsf{w}$ is an isomorphism given in~\cite[Theorem 3.3 and Proposition 5.1]{16Huang}, which is given by reading
standard ribbon tableaux from left to right starting with the bottom row, and
\item the second homomorphism $\eta$ is an essential epimorphism given in~\cite[Theorem 5.3]{20CKNO2}.
\end{enumerate}
Composing $\mathsf{w}^{-1}$ with $\eta$ yields a surjective $H_n(0)$-module homomorphism $\teta: \calP_{\bal_E^\rmc} \ra \bfS^\upsig_{\alpha,E}$.
In view of the definition of $\mathsf{w}$ and $\eta$, one can see that 
\begin{align*}
\teta(\pi_\gamma \opi_{w_0 w_0(\bal_E^\rmc)}) = 
\begin{cases}
\tau & \text{if $\tread(\tau) = \gamma$ for some $\tau \in E$,}\\
0 & \text{otherwise}.
\end{cases}
\end{align*}
Composing $\teta$ with the isomorphism 
\[
\sfem: \sfB(w_0(\bal_E), w_0 w_0(\bal_E^\rmc)) \ra \calP_{\bal_E^\rmc} \quad \text{(in Theorem~\ref{thm: embedding}~(3))}
\]
we finally have the surjective $H_n(0)$-module homomorphism
\[
\teta \circ \sfem: \sfB(w_0(\bal_E), w_0 w_0(\bal_E^\rmc)) \ra \bfS^\upsig_{\alpha,E}.
\]
Next, let us consider the projection 
\begin{align*}
\pr: \sfB(w_0(\bal_E), w_0 w_0(\bal_E^\rmc)) 
& \ra \sfB(\tread(\tauE), \tread(\tau_E')), \\
\gamma \quad 
& \mapsto 
\begin{cases}
\gamma & \text{if $\gamma \in [\tread(\tauE), \tread(\tau_E')]_L$},\\
0 & \text{else.}
\end{cases}
\end{align*}
By the definition of $\teta$, $\tread(\tauE) = w_0(\bal_E)$ and $\tread(\tau_E') \preceq_L w_0 w_0(\bal_E^\rmc)$ and therefore $\pr$ is a surjective $H_n(0)$-module homomorphism.

For our purpose, we have only to show that $\ker(\teta \circ \sfem) = \ker(\pr)$.
From the definition of the $H_n(0)$-action on $\bfS^\upsig_{\alpha,E}$, it follows that $\tread(\tau) \in [\tread(\tauE), \tread(\tau_E')]_L$ for any $\tau \in E$ and therefore $\ker(\teta \circ \sfem) \supseteq \ker(\pr)$.
On the other hand, using the fact that the $0$-Hecke action on $E$ satisfies the braid relations, one can show that every $\gamma \in [\tread(\tauE), \tread(\tau_E')]_L$ appears as $\tread(\tau)$ for some $\tau \in E$.
This says that $\ker(\teta \circ \sfem)^\rmc \supseteq \ker(\pr)^\rmc$, so we are done.
\end{proof}

\begin{remark}\label{rem: SPCT conj}
In case where $\upsig = \id$, a different reading from ours in Definition~\ref{def: reading for SPCT}
has already been introduced in~\cite[Definition 4.1]{15TW}.
More precisely, for each $\tau \in \SPCT^\id(\alpha)$, they define a reading word
$\col_\tau$, called the \emph{column word of $\tau$}.
They also introduce a partial order $\preceq_\alpha$ on $\SPCT^\id(\alpha)$ 
and prove that $(E,\preceq_\alpha)$ is a graded poset isomorphic to $([\col_{\tauE},\col_{\tau_E'}]_L, \preceq_L)$ (see \cite[Theorem 6.18]{15TW}).
In view of Theorem~\ref{thm: Va Xa pi opi form}, one may expect that $\bfS^\id_{\alpha,E}$ is isomorphic to $\sfB(\col_{\tauE}, \col_{\tau_E'})$.
This, however, turns out to be false.
For instance, let $E_0$ be the equivalence class given in Example~\ref{eg: equiv class E_0}.
Then $\col_{\tau^{~}_{E_0}} = 45312$, $ \col_{\tau_{E_0}'} = 45321$,
and $\sfB(45312,45321)$ is not indecomposable as seen in Figure~\ref{fig:equi int not guarantee module iso}.
Therefore, $\bfS^\id_{\alpha,E_0}$ is not isomorphic to $\sfB(45312,45321)$.
\end{remark}

\subsubsection{Involution twists of $\bfS^\upsig_\alpha$} \label{Involution twists; application}
Standard Young row-strict tableaux were first introduced in~\cite{15MN} as a combinatorial model for the Young row-strict quasisymmetric Schur functions $\calR_\alpha$.
Recently, Bardwell and Searles~\cite{20BS} succeeded in constructing an $H_n(0)$-module $\bfR_\alpha$ whose quasisymmetric characteristic image equals $\calR_\alpha$.
It is constructed by defining a $0$-Hecke action on the set of standard Young row-strict tableaux of shape $\alpha$.
We here introduce permuted standard Young row-strict tableaux which turn out to be very useful in describing the $H_n(0)$-module $\hautoomega[\bfSsa]$,
where $\hautoomega=\autophi \circ \autotheta \circ \autochi$.

\begin{definition}\label{def: SPYRT}
Given $\alpha \models n$ and $\upsig \in \SG_{\ell(\alpha)}$, a \emph{standard permuted Young row-strict composition tableau $T$ $($SPYRT$)$ of shape $\alpha$ and type $\upsig$} is a filling of $\tcd(\alpha^\rmr)$ with entries $\{1,2,\ldots, n\}$ such that the following conditions hold:
\begin{enumerate}[label = {\rm (\arabic*)}]
\item The entries are all distinct.
\item The standardization of the word obtained by reading the first column from bottom to top is $\upsig$.
\item The entries in each row are increasing from left to right.
\item If $i<j$ and $T_{i,k} < T_{j,k+1}$, then $(i,k+1) \in \tcd(\alpha^\rmr)$ and $T_{i,k+1} < T_{j,k+1}$.
\end{enumerate}
\end{definition}

Denote by $\SPYRT^\upsig(\alpha)$ the set of all SPYRTs of shape $\alpha$ and type $\upsig$.
Let $\bfR^\upsig_\alpha$ be the $\C$-span of $\SPYRT^\upsig(\alpha)$.
Define 
\begin{align}\label{eq: action on SPYRT}
\pi_i \sq T := \begin{cases}
T & \text{if $i+1$ is weakly left of $i$ in $T$},\\
0 & \text{if $i+1$ is right-adjacent to $i$ in $T$},\\
s_i \cdot T & \text{otherwise}
\end{cases}
\end{align}
for $1\le i \le n-1$ and $T \in \SPYRT^\upsig(\alpha)$.
Here, $s_i \cdot T$ is obtained from $T$ by swapping $i$ and $i+1$.

We claim that \eqref{eq: action on SPYRT} defines an $H_n(0)$-action on $\bfR^\upsig_\alpha$.
For $T \in \SPYRT^\upsig(\alpha)$, let $\tau_T$ be the filling of $\tcd(\alpha^\rmr)$ defined by $(\tau_T)_{i,j} = n+1-T_{i,j}$.
Define a $\C$-linear isomorphism $\calW: \bfR^\upsig_\alpha \ra \hautoomega[\bfS^{\upsig^{w_0}}_{\alpha^\rmr}]$ by letting
\[
\calW(T) = (-1)^{\rank(\tau_T)} \tau_T^{*} \quad \text{for $T \in \SPYRT^\upsig(\alpha)$},
\]
then extending it by linearity.
Here, 
$\tau_T^*$ is the dual of $\tau_T$ with respect to the basis $\SPCT^{\upsig^{w_0}}(\alpha^\rmr)$ for $\bfS^{\upsig^{w_0}}_{\alpha^\rmr}$ and
$\rank(\tau_T) := \min\{\ell(\gamma) \mid \pi_\gamma \cdot \tauE = \tau_T\}$, where $E$ is the equivalence class containing $\tau_T$.
One can verify that 
\[
\calW(\pi_i \sq T) = \pi_i \cdot \calW(T) \quad \text{for all $1\le i \le n-1$},
\]
which proves our claim.
In particular, when $\upsig = \id$, our $\bfR^\upsig_\alpha$ is exactly same to $\bfR_\alpha$ due to Bardwell and Searles.
To summarize, we state the following proposition.

\begin{proposition}\label{prop: bfR and omega twist}
For each $\alpha \models n$ and $\upsig \in \SG_{\ell(\alpha)}$, \eqref{eq: action on SPYRT} defines an $H_n(0)$-action on $\bfR^\upsig_\alpha$.
Moreover, $\calW: \bfR^\upsig_\alpha \ra \hautoomega[\bfS^{\upsig^{w_0}}_{\alpha^\rmr}]$ is an $H_n(0)$-module isomorphism.
\end{proposition}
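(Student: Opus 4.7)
The plan is to establish both claims simultaneously by verifying that $\calW$ intertwines the operators $\pi_i \sq \cdot$ on $\bfR^\upsig_\alpha$ with the $H_n(0)$-action on $\hautoomega[\bfS^{\upsig^{w_0}}_{\alpha^\rmr}]$. Once the intertwining is in hand, the operators $\pi_i \sq \cdot$ must satisfy the defining relations of $H_n(0)$ because they are $\calW$-conjugate to genuine module operators, which gives the first assertion; the second assertion is then immediate from the construction of $\calW$.

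For the underlying vector space isomorphism, I will check that the complementation $T \mapsto \tau_T$ with $(\tau_T)_{i,j} = n+1-T_{i,j}$ sends $\SPYRT^\upsig(\alpha)$ bijectively onto $\SPCT^{\upsig^{w_0}}(\alpha^\rmr)$. Each of the four conditions of Definition~\ref{def: SPYRT} transforms into the corresponding condition of Definition~\ref{def: PCT} under $a \mapsto n+1-a$: distinctness is preserved, left-to-right increase within rows becomes left-to-right decrease, the inequality in condition~(4) for SPYRTs reverses to give condition~(4) for SPCTs, and reading the first column bottom-to-top of $T$ versus top-to-bottom of $\tau_T$ (combined with complementation) conjugates the standardization by $w_0$, so the type $\upsig$ passes to $\upsig^{w_0}$. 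Since complementation is an involution, this is a bijection, and $\calW$ is a $\C$-linear isomorphism.

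For the intertwining, I compute $\hautoomega(\pi_i) = 1 - \pi_{n-i}$ directly from $\autochi(\pi_i) = \pi_i$, $\autotheta(\pi_i) = 1 - \pi_i$, and $\autophi(\pi_i) = \pi_{n-i}$, so that the action on the dual is
\[
(\pi_i \cdot^{\hautoomega} \delta)(v) = \delta(v) - \delta(\pi_{n-i} v).
\]
The key observation is that the entries $i, i+1$ in $T$ occupy the same cells as $n+1-i, n-i$ in $\tau_T$, so each of the three positional conditions on $(i,i+1)$ in $T$ translates into the condition on $(n-i, n+1-i)$ in $\tau_T$ that governs the action of $\pi_{n-i}$ on $\bfS^{\upsig^{w_0}}_{\alpha^\rmr}$. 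In particular, ``$i+1$ weakly left of $i$ in $T$'' is equivalent to ``$n-i \in \calD(\tau_T)$'', and analogous equivalences hold in the remaining two branches.

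The main technical obstacle is tracking the sign $(-1)^{\rank(\tau_T)}$ in the nontrivial swap case $\pi_i \sq T = s_i \cdot T$. In this case, both $\tau_T$ and $\tau_{s_i T}$ lie in the same equivalence class $E$, with $\pi_{n-i} \tau_T = \tau_{s_i T}$ and $\rank(\tau_{s_i T}) = \rank(\tau_T) + 1$; the sign $(-1)^{\rank(\tau_T)}$ therefore flips and combines with the subtraction in $1 - \pi_{n-i}$ to produce exactly $(-1)^{\rank(\tau_{s_i T})} \tau_{s_i T}^{*} = \calW(s_i \cdot T)$. The fixed-point and zero cases reduce to straightforward dual-basis computations once the positional correspondence is established. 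With the intertwining verified on all generators $\pi_i$, both statements of the proposition follow at once.
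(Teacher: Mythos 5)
Your overall approach mirrors the paper's: exhibit the cell-complementation bijection $\SPYRT^\upsig(\alpha)\to\SPCT^{\upsig^{w_0}}(\alpha^\rmr)$, compute $\hautoomega(\pi_i)=1-\pi_{n-i}$, and verify that $\calW$ intertwines generator by generator, from which both parts of the proposition drop out. The bijection analysis and the identity $\hautoomega(\pi_i)=1-\pi_{n-i}$ are correct, as is your key observation that ``$i+1$ weakly left of $i$ in $T$'' is equivalent to $n-i\in\calD(\tau_T)$, i.e.\ the SPYRT fixed case corresponds to the SPCT \emph{non}-fixed case for $\pi_{n-i}$.

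However, that very observation contradicts your treatment of the swap case. You claim there that $\pi_{n-i}\tau_T=\tau_{s_iT}$ with $\rank(\tau_{s_iT})=\rank(\tau_T)+1$. But in the swap case $i+1$ is \emph{not} weakly left of $i$, so $n-i\notin\calD(\tau_T)$ and hence $\pi_{n-i}\tau_T=\tau_T$ is a fixed point. The nontrivial action goes the other way: $n-i\in\calD(\tau_{s_iT})$ and $\pi_{n-i}\tau_{s_iT}=\tau_T$, so $\rank(\tau_T)=\rank(\tau_{s_iT})+1$. This matters because the anti-involution $\autochi$ buried inside $\hautoomega$ dualizes, swapping sources and sinks. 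If your claimed facts were true, one would compute
\[
\bigl(\pi_i\cdot^{\hautoomega}\tau_T^*\bigr)(\tau')=\tau_T^*(\tau')-\tau_T^*(\pi_{n-i}\tau')=\tau_T^*(\tau'),
\]
since $\tau_T$ would never appear in $\pi_{n-i}\tau'$ for any $\tau'$ (not for $\tau'=\tau_T$, by your claim, and not for any other $\tau'$ by idempotency). That gives $\pi_i\cdot^{\hautoomega}\calW(T)=\calW(T)\ne\calW(s_iT)$, so the intertwining fails. With the corrected facts the computation succeeds: the term $-\tau_T^*(\pi_{n-i}\tau')$ produces $-1$ precisely at $\tau'=\tau_{s_iT}$, yielding $\pi_i\cdot^{\hautoomega}\tau_T^*=-\tau_{s_iT}^*$, and then $(-1)^{\rank(\tau_T)}\cdot(-1)=(-1)^{\rank(\tau_{s_iT})+2}=(-1)^{\rank(\tau_{s_iT})}$ as needed. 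Your parity bookkeeping happened to land on the right answer only because flipping the rank by $+1$ or $-1$ gives the same sign; the underlying direction must be corrected for the argument to hold up. You should also fix the fixed-point case to match: there $\pi_{n-i}\tau_T\neq\tau_T$, and you must check that neither $\tau_T$ nor $s_{n-i}\tau_T$ can produce $\tau_T$ under $\pi_{n-i}$, so $\pi_i\cdot^{\hautoomega}\tau_T^*=\tau_T^*$.
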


\begin{remark}
In the combinatorial aspect, our SPYRTs are precisely the standard permuted Young composition tableaux (SPYCT) in~\cite[Definition 4.4]{20CKNO2}.
But, they should be distinguished in the sense that they have different $0$-Hecke actions.
For the $0$-Hecke action on SPYCTs, see~\cite[Subsection 4.2]{20CKNO2}.
The set of SPYCTs is a combinatorial model for an $H_n(0)$-module $\hbfSsa$ which is isomorphic to $\autophi[\bfS^{\upsig^{w_0}}_{\alpha^\rmr}]$.
\end{remark}

By virtue of Proposition~\ref{prop: bfR and omega twist}, one can transport lots of properties of $\bfS^{\sigma^{w_0}}_{\alpha^\rmr}$ to $\bfR^\upsig_\alpha$ via the functor $\bfT^-_{\hautoomega}: \module H_n(0) \ra \module H_n(0)$.
For each $E \in \calE^{\upsig^{w_0}}(\alpha^\rmr)$, let $\bfR^\upsig_{\alpha,E} := \calW^{-1}(\hautoomega[\bfS^{\upsig^{w_0}}_{\alpha^{\rmr},E}])$.
Combining Proposition~\ref{prop: bfR and omega twist} with the results in~\cite{20CKNO,20CKNO2,19TW}, we have the following corollary.

\begin{corollary} 
For each $E \in \calE^{\upsig^{w_0}}(\alpha^\rmr)$, the following hold:
\begin{enumerate}[label = {\rm (\arabic*)}]
\item $\bfR^\upsig_{\alpha,E}$ is indecomposable. In particular, $\bfR^\upsig_\alpha = \bigoplus_{E \in \calEsa} \bfR^\upsig_{\alpha,E}$ is a decomposition of $\bfR^\upsig_\alpha$ into indecomposables.
\item The injective hull of $\bfR^\upsig_{\alpha,E}$ is
$\calP_{\bal_E} (= \calP_{\bal(\alpha^{\rmr}, \upsig^{w_0}; E)})$.
\item $\bfR^\upsig_{\alpha,E} \cong \sfB(w_0\tread(\tau_E'), w_0\tread(\tauE))$.
\end{enumerate}
\end{corollary}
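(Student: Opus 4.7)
The plan is to reduce each of the three assertions to the corresponding, already-established fact about $\bfS^{\upsig^{w_0}}_{\alpha^{\rmr},E}$ via the isomorphism supplied by Proposition~\ref{prop: bfR and omega twist}, which restricts to $\bfR^\upsig_{\alpha,E} \cong \hautoomega[\bfS^{\upsig^{w_0}}_{\alpha^{\rmr},E}]$. Since $\hautoomega = \autophi\circ\autotheta\circ\autochi$ is an anti-involution of $H_n(0)$, the functor $\bfT^-_{\hautoomega}$ is a contravariant self-equivalence of $\module H_n(0)$; in particular it is exact, preserves indecomposability, and interchanges essential epimorphisms with essential monomorphisms---hence projective covers with injective hulls.

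For (1), I would import from \cite{19TW, 20CKNO} the decomposition $\bfS^{\upsig^{w_0}}_{\alpha^{\rmr}} = \bigoplus_{E} \bfS^{\upsig^{w_0}}_{\alpha^{\rmr}, E}$ into indecomposables and transport it through the equivalence together with $\calW$. Because $\bfT^-_{\hautoomega}$ carries indecomposables to indecomposables and direct sums to direct sums, the resulting decomposition $\bfR^\upsig_{\alpha} = \bigoplus_E \bfR^\upsig_{\alpha,E}$ is automatically into indecomposables.

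For (3), Theorem~\ref{thm: bfSsaE pi opi form} applied with $\alpha^{\rmr}$ and $\upsig^{w_0}$ in place of $\alpha$ and $\upsig$ gives $\bfS^{\upsig^{w_0}}_{\alpha^{\rmr},E} \cong \sfB(\tread(\tauE), \tread(\tau'_E))$, while the $\hautoomega$-entry of Table~\ref{tab: auto twist of BIM} yields $\hautoomega[\sfB(\sigma,\rho)] \cong \sfB(w_0\rho, w_0\sigma)$. Composing these two isomorphisms gives
\[
\bfR^\upsig_{\alpha,E} \cong \hautoomega[\sfB(\tread(\tauE), \tread(\tau'_E))] \cong \sfB(w_0\tread(\tau'_E), w_0\tread(\tauE)),
\]
which is exactly the claim.

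For (2), the composite $\mathsf{w}^{-1}\circ\eta$ built in the proof of Theorem~\ref{thm: bfSsaE pi opi form} is an essential epimorphism $\calP_{\bal_E} \twoheadrightarrow \bfS^{\upsig^{w_0}}_{\alpha^{\rmr}, E}$ exhibiting $\calP_{\bal_E}$ as the projective cover. Applying $\bfT^-_{\hautoomega}$ converts this to an essential monomorphism $\bfR^\upsig_{\alpha,E} \hookrightarrow \hautoomega[\calP_{\bal_E}]$, so the injective hull of $\bfR^\upsig_{\alpha,E}$ is $\hautoomega[\calP_{\bal_E}]$. The main obstacle I anticipate is identifying $\hautoomega[\calP_{\bal_E}]$ with $\calP_{\bal_E}$ itself as an $H_n(0)$-module; this will require extending the formula $\hautoomega[\calP_\alpha]\cong\calP_{\alpha^\rmc}$ of Table~\ref{tab: auto twist of F and P} from ordinary to generalised compositions---most cleanly by recognising $\calP_{\bal_E}$ as an induction product of the ordinary projective covers attached to the parts of $\bal_E$ (so that the twist formulas from Corollary~\ref{cor: automorhpism, induction, restriction} apply componentwise)---and then invoking the Frobenius structure of $H_n(0)$ to match the resulting indecomposable injective with $\calP_{\bal_E}$ itself via its simple socle.
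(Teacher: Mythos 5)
Parts (1) and (3) of your proposal match the paper's argument exactly: transport the decomposition of $\bfS^{\upsig^{w_0}}_{\alpha^\rmr}$ from \cite{20CKNO} through the equivalence $\bfT^-_{\hautoomega}$ for (1), and compose Theorem~\ref{thm: bfSsaE pi opi form} with the $\hautoomega$-row of Table~\ref{tab: auto twist of BIM} for (3).

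For (2), however, there is a genuine error. The essential epimorphism $\teta$ constructed in the proof of Theorem~\ref{thm: bfSsaE pi opi form} has source $\calP_{\bal_E^\rmc}$, not $\calP_{\bal_E}$; that is, the projective cover of $\bfS^{\upsig^{w_0}}_{\alpha^\rmr,E}$ is $\calP_{\bal_E^\rmc}$. You misquoted this as $\calP_{\bal_E}$, and then compensated by anticipating the identification $\hautoomega[\calP_{\bal_E}]\cong\calP_{\bal_E}$. But that identification is false: the very formula you cite from Table~\ref{tab: auto twist of F and P}, $\hautoomega[\calP_\alpha]\cong\calP_{\alpha^\rmc}$, extends (componentwise, using that $\hautoomega$ is an anti-automorphism so the induction factors reverse and then reverse back) to $\hautoomega[\calP_{\bal}]\cong\calP_{\bal^\rmc}$ for generalized compositions, which is not $\calP_{\bal}$ unless $\bal=\bal^\rmc$. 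Your two errors cancel numerically to produce the correct final answer, but as written you would be stuck trying to prove a false statement. The correct chain is: projective cover $\calP_{\bal_E^\rmc}$, hence injective hull $\hautoomega[\calP_{\bal_E^\rmc}]\cong\calP_{(\bal_E^\rmc)^\rmc}=\calP_{\bal_E}$. No appeal to Frobenius structure or sockets is needed beyond the twist formula itself.
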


\begin{proof}
(1) Since $\bfT^-_{\hautoomega}: \module H_n(0) \ra \module H_n(0)$ is an isomorphism, it preserves direct sum.
Therefore, the assertion can be obtained by combining Proposition~\ref{prop: bfR and omega twist} with~\cite[Theorem 3.1]{20CKNO}.

(2) As seen in the proof of Theorem~\ref{thm: bfSsaE pi opi form}, 
$\teta: \calP_{\bal_E^\rmc} \to \bfS^{\upsig^{w_0}}_{\alpha^\rmr,E}$ is an essential epimorphism,
thus $\calP_{\bal_E^\rmc}$ is the projective cover of $\bfS^{\upsig^{w_0}}_{\alpha^\rmr,E}$.
It is clear that $\bfT^-_{\hautoomega}$ is a contravariant exact functor.
So, taking $\bfT^-_{\hautoomega}$ on $\teta$ yields that  
$\hautoomega[\calP_{\bal_E^\rmc}]$ is the injective hull of $\hautoomega[\bfS^{\upsig^{w_0}}_{\alpha^\rmr,E}] \cong \bfR^{\upsig}_{\alpha,E}$.
Now the assertion follows from the isomorphism $\hautoomega[\calP_{\bal_E^\rmc}] \cong \calP_{\bal_E}$, which is due to Table~\ref{tab: auto twist of F and P}.

(3) The assertion follows from Theorem~\ref{thm: auto twist part 1} with Theorem~\ref{thm: bfSsaE pi opi form}.
\end{proof}

The three commutative diagrams in Figure~\ref{fig: commutative diagrams for twist of S} show 
various (anti-)involution twists of $\bfSsa$ as well as their images under the quasisymmetric characteristic when $\upsig = \id$.
\begin{figure}[t]
\centering
\begin{tikzpicture}
\def \sc {1.5}
\def \depth{1.2}
\def \hp{-5.5}
\def \hhp{-12}

\node[] at (0*\sc + 0.5*\depth,0*\sc + 0.5*\depth) {$\calS_\alpha$};
\node[] at (0*\sc + 0.5*\depth,1.5*\sc + 0.5*\depth) {$\mathcal{RS}_\alpha$};
\node[] at (1.7*\sc + 0.5*\depth,0 + 0.5*\depth) {$\hcalS_{\alpha^\rmr}$};
\node[] at (1.7*\sc + 0.5*\depth,1.5*\sc + 0.5*\depth) {$\calR_{\alpha^\rmr}$};

\node[] at (0.5*\depth - 0.15*\sc,0.75*\sc + 0.5*\depth) {\scriptsize $\uppsi$};
\draw [<->] (0*\sc + 0.5*\depth,0.3*\sc + 0.5*\depth) -- (0 + 0.5*\depth,1.2*\sc + 0.5*\depth);

\draw [<->] (1.7*\sc + 0.5*\depth,0.3*\sc + 0.5*\depth) -- (1.7*\sc + 0.5*\depth,1.2*\sc + 0.5*\depth);
\draw [<->] (0.4*\sc + 0.5*\depth,1.5*\sc + 0.5*\depth) -- (1.4*\sc + 0.5*\depth,1.5*\sc + 0.5*\depth);
\draw [<->] (0.3*\sc + 0.5*\depth,0*\sc + 0.5*\depth) -- (1.4*\sc + 0.5*\depth,0*\sc + 0.5*\depth);
\node[] at (0.85*\sc + 0.5*\depth,0.15*\sc + 0.5*\depth) {\scriptsize $\uprho$};
\node[] at (3*\depth - 0.15*\sc,0.75*\sc + 0.5*\depth) {\scriptsize $\uppsi$};
\node[] at (0.85*\sc + 0.5*\depth,1.65*\sc + 0.5*\depth) {\scriptsize $\uprho$};

\draw [->,thick] (-1, 0.75*\sc + 0.5*\depth) -- (0, 0.75*\sc + 0.5*\depth);
\node[] at (-0.5,0.95*\sc + 0.5*\depth) {\scriptsize $\ch$};

\node[] at (0*\sc+\hp,0*\sc) {$\bfS_\alpha$};
\node[] at (0*\sc+\hp,1.5*\sc) {$\autotheta[\bfS_\alpha]$};
\node[] at (1.7*\sc+\hp,0*\sc) {$\hbfS_{\alpha^\rmr}$};
\node[] at (1.7*\sc+\hp,1.5*\sc) {$\autoomega[\bfS_\alpha]$};
\draw [<->] (0*\sc+\hp,0.3*\sc) -- (0*\sc+\hp,1.2*\sc);
\draw [<->] (1.7*\sc+\hp,0.3*\sc) -- (1.7*\sc+\hp,1.2*\sc);
\draw [<->] (0.4*\sc+\hp,1.5*\sc) -- (1.3*\sc+\hp,1.5*\sc);
\draw [<->] (0.3*\sc+\hp, 0*\sc) -- (1.4*\sc+\hp, 0*\sc);

\node[] at (0*\sc+\hp+\depth,0*\sc+\depth) {$\autochi[\bfS_\alpha]$};
\node[] at (0*\sc+\hp+\depth,1.5*\sc+\depth) {$\hautotheta[\bfS_\alpha]$};
\node[] at (1.7*\sc+\hp+\depth,0*\sc+\depth) {$\hautophi[\bfS_\alpha]$};
\node[] at (1.7*\sc+\hp+\depth,1.5*\sc+\depth) {$\bfR_{\alpha^\rmr}$};
\draw [<->] (0*\sc+\hp+\depth,0.3*\sc+\depth) -- (0*\sc+\hp+\depth, 1.2*\sc+\depth);
\draw [<->] (1.7*\sc+\hp+\depth,0.3*\sc+\depth) -- (1.7*\sc+\hp+\depth, 1.2*\sc+\depth);
\draw [<->] (0.4*\sc+\hp+\depth,1.5*\sc+\depth) -- (1.4*\sc+\hp+\depth, 1.5*\sc+\depth);
\draw [<->] (0.4*\sc+\hp+\depth,0*\sc+\depth) -- (1.2*\sc+\hp+\depth, 0*\sc+\depth);

\draw [<->, red] (0*\sc+\hp + 0.25*\depth, 0*\sc  + 0.25*\depth) -- (0*\sc+\hp  + 0.75*\depth,0*\sc  + 0.75*\depth);
\draw [<->, red] (0*\sc+\hp + 0.25*\depth, 1.5*\sc  + 0.25*\depth) -- (0*\sc+\hp  + 0.75*\depth,1.5*\sc  + 0.75*\depth);
\draw [<->, red] (1.7*\sc+\hp + 0.25*\depth, 0*\sc  + 0.25*\depth) -- (1.7*\sc+\hp  + 0.75*\depth,0*\sc  + 0.75*\depth);
\draw [<->, red] (1.7*\sc+\hp + 0.25*\depth, 1.5*\sc  + 0.25*\depth) -- (1.7*\sc+\hp  + 0.75*\depth,1.5*\sc  + 0.75*\depth);

\draw [->,thick] (-1.8 + \hp, 0.75*\sc + 0.5*\depth) -- (-0.5 + \hp, 0.75*\sc + 0.5*\depth);
\node[] at (-1.2 + \hp,0.95*\sc + 0.5*\depth) {\scriptsize $\upsig = \id$};

\node[] at (0*\sc+\hhp,0*\sc) {$\bfS^{\upsig}_\alpha$};
\node[] at (0*\sc+\hhp,1.5*\sc) {$\autotheta[\bfS^{\upsig}_{\alpha}]$};
\node[] at (1.7*\sc+\hhp,0*\sc) {
$\hbfS^{\upsig^{w_0}}_{\alpha^{\rmr}}$};
\node[] at (1.7*\sc+\hhp,1.5*\sc) {$\autoomega[\bfS^{\upsig}_{\alpha}]$};
\node[] at (\hhp - 0.2*\sc - \hhp +\hp,0.75*\sc) {\tiny $\bfT^+_\autotheta$};

\node[] at (\hhp - 0.2*\sc,0.75*\sc) {\tiny $\bfT^+_\autotheta$};
\draw [<->] (0*\sc+\hhp,0.3*\sc) -- (0*\sc+\hhp,1.2*\sc);

\draw [<->] (1.7*\sc+\hhp,0.3*\sc) -- (1.7*\sc+\hhp,1.2*\sc);
\draw [<->] (0.4*\sc+\hhp,1.5*\sc) -- (1.3*\sc+\hhp,1.5*\sc);
\node[] at (0.85*\sc+\hhp - \hhp+\hp,0.15*\sc) {\tiny $\bfT^+_\autophi$};

\node[] at (0.85*\sc+\hhp,1.65*\sc-1.5*\sc) {\tiny $\bfT^+_\autophi$};
\draw [<->] (0.3*\sc+\hhp,0*\sc) -- (1.4*\sc+\hhp,0*\sc);

\node[] at (0*\sc+\hhp+\depth,0*\sc+\depth) {$\autochi[\bfS^{\upsig}_\alpha]$};
\node[] at (0*\sc+\hhp+\depth,1.5*\sc+\depth) {
$\hautotheta[\bfSsa]$};
\node[] at (1.7*\sc+\hhp+\depth,0*\sc+\depth) {
$\hautophi[\bfS^{\upsig}_\alpha]$};
\node[] at (1.7*\sc+\hhp+\depth,1.5*\sc+\depth) {
$\bfR^{\upsig^{w_0}}_{\alpha^\rmr}$};
\draw [<->] (0*\sc+\hhp+\depth,0.3*\sc+\depth) -- (0*\sc+\hhp+\depth, 1.2*\sc+\depth);

\draw [<->] (1.7*\sc+\hhp+\depth,0.3*\sc+\depth) -- (1.7*\sc+\hhp+\depth, 1.2*\sc+\depth);
\draw [<->] (0.4*\sc+\hhp+\depth,1.5*\sc+\depth) -- (1.3*\sc+\hhp+\depth, 1.5*\sc+\depth);

\draw [<->] (0.5*\sc+\hhp+\depth,0*\sc+\depth) -- (1.2*\sc+\hhp+\depth, 0*\sc+\depth);
\draw [<->,red] (0*\sc+\hhp + 0.25*\depth, 0*\sc  + 0.25*\depth) -- (0*\sc+\hhp  + 0.75*\depth,0*\sc  + 0.75*\depth);
\node[] at (-0.1*\sc+\hhp+0.5*\depth - \hhp +\hp, 0.2*\sc+0.5*\depth) {\tiny $\bfT^-_\autochi$};

\node[] at (-0.1*\sc+\hhp+0.5*\depth, 0.15*\sc+0.5*\depth) {\tiny $\bfT^-_\autochi$};
\draw [<->, red] (0*\sc+\hhp + 0.25*\depth, 1.5*\sc  + 0.25*\depth) -- (0*\sc+\hhp  + 0.75*\depth,1.5*\sc  + 0.75*\depth);

\draw [<->, red] (1.7*\sc+\hhp + 0.25*\depth, 0*\sc  + 0.25*\depth) -- (1.7*\sc+\hhp  + 0.75*\depth,0*\sc  + 0.75*\depth);
\draw [<->, red] (1.7*\sc+\hhp + 0.25*\depth, 1.5*\sc  + 0.25*\depth) -- (1.7*\sc+\hhp  + 0.75*\depth,1.5*\sc  + 0.75*\depth);
\end{tikzpicture}
\caption{(Anti-)involution twists of $\bfSsa$ and their images under the quasisymmetric characteristic when $\upsig=\id$}
\label{fig: commutative diagrams for twist of S}
\end{figure}
In the first and second diagram, 
the functors assigned to parallel arrows are all same
and the arrows in red are being used to indicate that the domain and codomain have the same image under the quasisymmetric characteristic.
In the last diagram, 
$\hcalS_{\alpha^\rmr}$ is the \emph{Young quasisymmetric Schur function} in~\cite[Definition 5.2.1]{13LMvW},
$\mathcal{RS}_\alpha$ is the \emph{row-strict quasisymmetric Schur function} in~\cite[Definition 3.2]{14MR}, 
and $\uppsi, \uprho$ are automorphisms of $\Qsym$ defined by $\uppsi(F_\alpha) = F_{\alpha^\rmc}$ and $\uprho(F_\alpha) = F_{\alpha^\rmr}$. 

\begin{remark}\label{rem: omega twist for S}
(1) Let $k$ be a positive integer and $\omega := \uppsi \circ \uprho$. 
It was stated in~\cite[Theorem 5.1]{14MR} that 
\[
\omega(\calS_\alpha(x_1,x_2, \ldots, x_k)) = \calRS_{\alpha}(x_k,x_{k-1},\ldots, x_1).
\]
On the other hand, the third diagram in Figure~\ref{fig: commutative diagrams for twist of S} 
shows that 
$\omega(\calS_\alpha) = \calR_{\alpha^\rmr}$, thus
\[
\omega(\calS_\alpha(x_1,x_2, \ldots, x_k)) = \calR_{\alpha^\rmr}(x_1, x_2, \ldots, x_k).
\]
As a consequence, we derive that   
\[
\calRS_{\alpha}(x_k,x_{k-1},\ldots, x_1)=\calR_{\alpha^\rmr}(x_1, x_2, \ldots, x_k).
\]
We add a remark that in some literature such as~\cite[Subsection 5.2]{13LMvW} and \cite[Remark 4.4]{18TW},
the identity $\omega(\calS_\alpha) = \calR_{\alpha^\rmr}$ is incorrectly stated as  
$\omega(\calS_\alpha) = \calRS_{\alpha}$.
\medskip

\noindent
(2) One can also observe $\omega(\hcalS_\alpha) = \calR_{\alpha}$ in~\cite[Theorem 12]{15MN}. 
This identity, however, should appear as $\omega(\hcalS_\alpha) = \calRS_{\alpha^\rmr}$ by the third diagram in 
Figure~\ref{fig: commutative diagrams for twist of S}. 
Within the best understanding of the authors, this error seems to have occurred for the reason that 
the descent sets of Young composition tableaux and that of standard Young row-strict composition tableaux are defined in a different manner.
The proof of~\cite[Theorem 12]{15MN}, with a small modification, 
can be used to verify $\omega(\hcalS_\alpha) = \calRS_{\alpha^\rmr}$.
\end{remark}

\section{Further avenues}\label{sec: Further remark}
\noindent
(1) We have studied the structure of weak Bruhat interval modules so far.
However, there are still many unsolved fundamental problems including the following:
\begin{enumerate}[label = -]
\item 
Classify all weak Bruhat interval modules up to isomorphism.
\item 
Given an interval $[\sigma,\rho]_L$, decompose $\sfB(\sigma,\rho)$ into indecomposables.
\item
Given an interval $[\sigma,\rho]_L$, find the projective cover and the injective hull of $\sfB(\sigma,\rho)$.
\end{enumerate}
\medskip

\noindent
(2)
In~\cite[Section 9 and 10]{15TW}, Tewari and van Willigenburg provide
a restriction rule for $\bfS_{\alpha}\downarrow^{H_n(0)}_{H_{n-1}(0)}$
and ask if there is a reciprocal induction rule for  $\bfS_{\alpha}\uparrow^{H_n(0)}_{H_{n-1}(0)}$ with respect to the restriction rule. 
By combining Lemma~\ref{lem: tensor product} with Theorem~\ref{thm: bfSsaE pi opi form}, we successfully decompose $\bfS_{\alpha}\uparrow^{H_n(0)}_{H_{n-1}(0)}$ into 
weak Bruhat interval modules.
But, at the moment, we do not know if it can be expressed as a direct sum of $\bfS_{\beta}$'s.
We expect that a better understanding of the weak Bruhat interval modules appearing in the decomposition would be of great help in solving this problem.
In line with this philosophy, it is interesting to  
find or characterize all intervals $[\sigma,\rho]_L$ such that $\sfB(\sigma, \rho)$ is isomorphic to $\calV_\alpha$, $X_\alpha$, or $\bfS^{\upsig}_{\alpha,E}$.
\medskip

\noindent
(3) 
Let $n$ and $N$ be arbitrary positive integers and $V:= \mathbb{C}^N$.
Using the fact that the left $\mathcal{U}_0(gl_N)$-action on $V^{\otimes n}$ commutes with the right $H_n(0)$-action on $V^{\otimes n}$, Krob and Thibon~\cite{99KT} construct $\mathcal{U}_0(gl_N)$-modules 
\begin{align*}
&\mathbf{D}_\alpha := V^{\otimes n} \cdot \opi_{w_0(\alpha^\rmc) \, w_0} \pi_{w_0(\alpha^\rmt)}
\quad \text{and}\\ 
&\mathbf{N}_\alpha := V^{\otimes n} \cdot \opi_{w_0(\alpha)} \pi_{w_0(\alpha^\rmc)}
\end{align*}
for every composition $\alpha$ of $n$.
Then they prove that, as $\alpha$ ranges over the set of nonempty compositions,
$\mathbf{D}_\alpha$'s form a complete family of irreducible polynomial $\mathcal{U}_0(gl_N)$-modules
and $\mathbf{N}_\alpha$'s a complete family of indecomposable polynomial $\mathcal{U}_0(gl_N)$-modules 
which arise as a direct summand of $V^{\otimes n}$ for some $n>0$.
They also realize $\bfF_\alpha$ and $\bfP_\alpha$ as the left ideals of $H_n(0)$
\begin{align}\label{eq: bfF and bfP in KT}
\begin{aligned}
\bfF_\alpha 
& \cong H_n(0) \cdot \opi_{w_0(\alpha^\rmc) \, w_0} \pi_{w_0(\alpha^\rmt)}
\quad \text{and}\\ 
\bfP_\alpha 
& \cong H_n(0) \cdot \opi_{w_0(\alpha)} \pi_{w_0(\alpha^\rmc)}.
\end{aligned}
\end{align}
Hence, by replacing $H_n(0)$ by $V^{\otimes n}$ in~\eqref{eq: bfF and bfP in KT}, one obtains 
the $\mathcal{U}_0(gl_N)$-modules $\mathbf{D}_\alpha$ and $\mathbf{N}_\alpha$ from the $H_n(0)$-modules $\bfF_\alpha$ and $\bfP_\alpha$, respectively.
This relationship seems to work well at the character level as well.
In this regard, Hivert~\cite{00Hivert} shows that the Weyl character of $\mathbf{D}_\alpha$ is equal to the quasisymmetric polynomial 
$F_\alpha(x_1, x_2, \ldots, x_N, 0,0,\ldots)$, where $F_\alpha(x_1, x_2,\ldots) = \ch([\bfF_\alpha])$.

In the present paper, we study intensively weak Bruhat interval modules, which are 
of the form $H_n(0)\pi_{\sigma} \opi_{\rho}$ or $H_n(0)\opi_{\sigma} \pi_{\rho}$ up to isomorphism (Theorem~\ref{thm: embedding}).
Hence, it would be very meaningful to investigate how our results about weak Bruhat interval modules are reflected on  
the corresponding $\mathcal{U}_0(gl_N)$-modules, in other words, 
the $\mathcal{U}_0(gl_N)$-modules of the form 
$V^{\otimes n} \cdot \pi_\sigma \opi_\rho$ and $V^{\otimes n} \cdot \opi_\sigma \pi_\rho$ for $\sigma, \rho \in \SG_n$.
\medskip

\noindent {\bf Acknowledgments.}
The authors would like to thank Sarah Mason and Elizabeth Niese for helpful discussions on Remark~\ref{rem: omega twist for S}.
The authors also would like to thank Dominic Searles for helpful discussions on the $0$-Hecke action on $\bfR^\sigma_\alpha$.
The authors are grateful to the anonymous referee for careful readings of the manuscript and valuable advice.

\bibliographystyle{abbrv}
\bibliography{references}

\end{document}